\title[semi log canonical surfaces]
{Abundance theorem for 
semi log canonical surfaces in positive characteristic} 
\author{Hiromu Tanaka} 
\subjclass[2010]{Primary 14E30; Secondary 14J10.}
\keywords{abundance theorem, semi log canonical, 
positive characteristic}
\address{Department of Mathematics, Imperial College London, 
180 Queen's Gate, London SW7 2AZ, UK} 
\email{h.tanaka@imperial.ac.uk}
\newcommand{\Ker}[0]{{\operatorname{Ker}}}
\newcommand{\Spec}[0]{{\operatorname{Spec}}}
\newcommand{\Aut}[0]{{\operatorname{Aut}}}
\newcommand{\Supp}[0]{{\operatorname{Supp}}}
\newcommand{\Ex}[0]{{\operatorname{Ex}}}
\newcommand{\Sing}[0]{{\operatorname{Sing}}}
\newtheorem{thm}{Theorem}[section]
\newtheorem{lem}[thm]{Lemma}
\newtheorem{cor}[thm]{Corollary}
\newtheorem{prop}[thm]{Proposition}
\newtheorem{ex}[thm]{Example}
\theoremstyle{definition}
\newtheorem{dfn}[thm]{Definition}
\newtheorem{rem}[thm]{Remark}
\newtheorem*{ack}{Acknowledgments}      
\newtheorem{nota}[thm]{Notation}         
\newtheorem{step}{Step}
\newtheorem{nasi}[thm]{}
\newcommand{\MO}{\mathcal{O}}
\newcommand{\Q}{$\mathbb{Q}$}
\newcommand{\Z}{$\mathbb{Z}$}
\begin{document}

\maketitle

\begin{abstract}
We prove the abundance theorem 
for semi log canonical surfaces in positive characteristic. 
\end{abstract}

\tableofcontents

\setcounter{section}{-1}

\section{Introduction}
A semi log canonical (for short, slc) $n$-fold 
is a generalization of log canonical (for short, lc) $n$-folds. 
In this paper, we prove 
the abundance theorem for slc surfaces 
in positive characteristic. 
We use the same definition of slc varieties as the one of \cite{Kollar}. 

\begin{thm}
Let $(X, \Delta)$ be a projective slc surface 
over an algebraically closed field of positive characteristic. 
If $K_X+\Delta$ is nef, 
then $K_X+\Delta$ is semi-ample. 
\end{thm}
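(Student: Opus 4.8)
The plan is to reduce to the normal, log canonical case by normalization, invoke the abundance theorem for projective lc surfaces in positive characteristic, and then descend semi-ampleness through Koll\'ar's gluing theory. First I would take the normalization $\nu \colon \bar{X} \to X$. Writing $\bar{C}$ for the (reduced) conductor divisor on $\bar{X}$ and $\bar{\Delta}$ for the strict transform of $\Delta$, the pair $(\bar{X}, \bar{\Delta}+\bar{C})$ is log canonical and satisfies $K_{\bar{X}}+\bar{\Delta}+\bar{C}=\nu^{*}(K_{X}+\Delta)$. Since $\nu$ is finite and $K_{X}+\Delta$ is nef, the divisor $\bar{L}:=K_{\bar{X}}+\bar{\Delta}+\bar{C}$ is again nef, so by abundance for lc surfaces it is semi-ample; hence there is a contraction $\bar\phi \colon \bar{X} \to \bar{Z}$ onto a projective variety and an ample $\mathbb{Q}$-divisor $A$ on $\bar{Z}$ with $\bar{L}=\bar\phi^{*}A$.

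The heart of the argument is descending this fibration along $\nu$. By gluing theory the slc surface $X$ is recovered from $(\bar{X},\bar{\Delta}+\bar{C})$ together with an involution $\tau$ on the normalization $\bar{C}^{\nu}$ of the conductor: namely $X=\bar{X}/R$, where $R$ is the equivalence relation generated by identifying $p$ with $\tau(p)$ over $\bar{C}$. The key observation is that $\bar{L}$ is a pullback from $X$, so its restriction to the conductor is automatically $\tau$-invariant; consequently the morphism defined by a large multiple of $\bar{L}$ is constant on $\tau$-orbits, that is $\bar\phi(p)=\bar\phi(\tau(p))$ for $p$ over $\bar{C}$. I would then push the relation $R$ forward to a finite equivalence relation on $\bar{Z}$ and form the quotient $Z=\bar{Z}/{\sim}$, obtaining a morphism $\phi \colon X \to Z$ that factors $\bar\phi$ through $\nu$, together with an ample $\mathbb{Q}$-divisor $A'$ on $Z$ pulling back to $A$. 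Then $K_{X}+\Delta=\phi^{*}A'$ is semi-ample.

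The main obstacle I anticipate is precisely this descent in positive characteristic, where two points demand care. First, one must construct the quotient $Z=\bar{Z}/{\sim}$ as a projective variety; in characteristic zero this is supplied by standard gluing and descent lemmas, but in positive characteristic it must be verified directly. Here the low dimension is essential: the images of the conductor in $\bar{Z}$ are curves or points, and gluing a projective surface along a finite equivalence relation supported on such loci can be handled via Koll\'ar's quotient criteria and the amplitude of $A$. Second, one must confirm that the fibration genuinely respects the gluing even when $\tau$ is inseparable or wildly ramified; the saving point is that $\tau$-compatibility of $\bar\phi$ over the conductor is \emph{forced} by the fact that $\bar{L}$ is already a pullback from $X$, so that no choice of semi-ample model on $\bar{X}$ can separate a point of the conductor from its $\tau$-image, and the failure of vanishing theorems never enters the compatibility itself.
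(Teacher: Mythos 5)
Your first step (normalize, write $K_{\bar X}+\bar\Delta+\bar C=\nu^*(K_X+\Delta)$, and invoke abundance for lc surfaces in positive characteristic) is fine and agrees with the paper. The gap is in the descent step, and it is exactly the point where the whole difficulty of the theorem lives. You claim that because $\bar L$ is a pullback from $X$, its restriction to the conductor is $\tau$-invariant, and that ``consequently'' $\bar\phi(p)=\bar\phi(\tau(p))$, so that no semi-ample model can separate a conductor point from its $\tau$-image. This inference is false. Invariance of the $\mathbb{Q}$-divisor class $\bar L|_{\bar C^\nu}$ under $\tau$ only says that $\tau$ descends to a self-map of the image of the conductor in $\bar Z$; it permutes the fibers of $\bar\phi$, it does not fix them. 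Already for a vertical conductor this fails: take $\bar X=\mathbb P^1\times\mathbb P^1$ with conductor $\bar C=\{0\}\times\mathbb P^1+\{\infty\}\times\mathbb P^1$ and a boundary $\bar\Delta$ making $K_{\bar X}+\bar C+\bar\Delta\sim 2(\{pt\}\times\mathbb P^1)$; then $\bar\phi$ is the first projection and sends $p$ and $\tau(p)$ to the two distinct points $0,\infty\in\bar Z=\mathbb P^1$. So you are forced into your fallback, pushing $R$ forward to a relation on $\bar Z$ --- and there the problem is that the \emph{generated} equivalence relation need not be finite: iterating $\bar\phi(p)\sim\bar\phi(\tau(p))$ composes correspondences on $\bar Z$, and when the conductor is horizontal over the base these can generate infinite orbits (the model case being a gluing map that induces a non-torsion translation of an elliptic base, where a quotient variety simply does not exist). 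Ruling this out is not automatic; it is precisely the finiteness of the pluricanonical representation, which the paper proves for curves (Theorem~\ref{finitegroup}) and then exploits through the machinery of admissible and preadmissible sections (Propositions~\ref{4.5} and~\ref{4.7}, Lemma~\ref{padescendslc}), with symmetric functions replacing the averaging that characteristic $p$ forbids. Your proposal asserts this compatibility ``is forced,'' which is exactly the statement that needs proof.

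There is a second, positive-characteristic-specific issue your route never sees. In the hard case the conductor is a degree-two cover of the base of the fibration, and in characteristic two this cover can be \emph{purely inseparable} (Proposition~\ref{2.1}, case $(2.1\mathrm{i})$, realized by an explicit example in the paper): there is then no involution $\tau$ at all on the relevant component, so any argument organized around a Galois involution of the conductor breaks down before it starts. The paper handles this case by the Frobenius factorization $F\colon S_1\to V\to S_1$ and the $p$-th power sections $A^{(p)}$, $PA^{(p)}$ of Definition~\ref{4.1}; some substitute for this would be needed in any quotient-theoretic proof as well.
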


Let us briefly review the history of the semi log canonical varieties in characteristic zero. 
The notion of semi log canonical singularities 
is introduced in \cite{KSB} for a moduli problem. 
The abundance theorem for slc surfaces 
is proved in \cite{AFKM} and \cite{KeKo}. 
\cite{Fujino} generalizes this result to dimension three. 
Moreover, \cite{Fujino} shows that 
the abundance theorem for slc $n$-folds follows 
from the two parts: 
\begin{enumerate}
\item{The abundance theorem for lc $n$-folds. } 
\item{The finiteness theorem 
of the pluri-canonical representation 
for $(n-1)$-folds.}
\end{enumerate}
\cite{FG} shows that (2) holds for each $n\in\mathbb Z_{>0}$. 
If $n=3$, then (1) follows from \cite{KeMaMc}. 
If $n\geq 4$, then (1) is an open problem. 
For a recent development of the theory of slc varieties in characteristic zero, 
see \cite{Fujino2}, \cite{FG}, \cite{Gongyo} and \cite{HX}. 
For related topics, see \cite{Kawamata}, \cite{AFM} and \cite{KoMaMc}. 

In this paper, we use the strategy of \cite{Fujino}. 
Hence, we must prove (1) and (2) 
in the case where $n=2$ and ${\rm char}\,k>0$. 
In this case, (1) is a known result by \cite{Fujita}. 
It is not defficult to prove (2). 
However, \cite{Fujino} uses many fundamental results 
based on the minimal model theory and the Kawamata--Viehweg vanishing theorem. 
We can freely use the minimal model theory for surfaces in positive characteristic 
by \cite{Tanaka1} (cf. \cite{Fujita}, \cite{KoKo}). 
Although there exist counter-examples to the Kodaira vanishing theorem in positive characteristic (\cite{Raynaud}), 
we can use some weaker vanishing theorems obtained in \cite{Tanaka2} (cf. \cite{KoKo}, \cite{Xie}). 

In characteristic two, some new phenomena happen. 
For example, in characteristic zero, 
the Whitney umbrella $\{x^2=yz^2\}\subset \mathbb A^3$ is a typical example of slc surfaces 
(cf. \cite[Definition~12.2.1]{AFKM}). 
In characteristic two, this is slc but not normal crossing in codimension one. 
Moreover, \cite{Fujino} uses the following fact: 
if a field extension $L/K$ 
satisfies $[L:K]=2$ and its characteristic is zero, 
then $L/K$ is a Galois extension. 
But, in characteristic two, 
this field extension $L/K$ may be purely inseparable. 
Thus, some proofs are more complicated.

\begin{nasi}[Overview of contents]
In Section~1, we summarize the notations. 
The normalization of an slc surface is an lc surface. 
Therefore, we should investigate lc surfaces. 
Every lc surface is birational to a dlt surface. 
Thus, in Section~2, we consider 
a dlt surface $(X, \Delta)$. 
More precisely, 
we consider $\llcorner\Delta\lrcorner$ 
because $\llcorner\Delta\lrcorner$ has 
the patching data of the normalization. 
In Section~3, we calculate the normalization of nodal singularities. 
In Section~4, we prove the main theorem. 
In Section~5, we summarize fundamental results on dlt surfaces. 
These results may be well-known but 
the author can not find a good reference. 
\end{nasi}

\begin{ack}
The author would like to 
thank Professor Osamu Fujino 
for many comments and discussions. 
In particular, Professor Osamu Fujino teaches him 
Proposition~\ref{4.4} and Theorem~\ref{finitegroup} in this paper. 
He thanks Professor Atsushi Moriwaki for warm encouragement. 
He would like to thank the referee for valuable suggestions. 
\end{ack}

\section{Notations}

We will not distinguish the notations 
invertible sheaves and Cartier divisors. 
For example, 
we will write $L+M$ for 
invertible sheaves $L$ and $M$. 

Throughout this paper except for Section~3, 
we work over an algebraically closed field $k$ 
of positive characteristic and 
let ${\rm char}\,k=:p.$

In this paper, a {\em variety} means 
a pure dimensional 
reduced scheme which is separated 
and of finite type over $k$. 
A {\em curve} or a {\em surface} means 
a variety whose dimension is one or two, 
respectively. 
Note that varieties, curves and surfaces 
may be reducible. 

Let $X$ be a noetherian reduced scheme and 
let $X=\bigcup X_i$ be the irreducible decomposition. 
Let $Y_i\to X_i$ be the normalization of $X_i$. 
Then we define the {\em normalization} of $X$ by 
$\coprod Y_i\to \coprod X_i\to X.$ 
We say $X$ is {\em normal} if the normalization morphism is an isomorphism.

Let $X$ be a variety. 
We say $\Delta$ is a {\em \Q-divisor} on $X$ if 
$\Delta$ is a finite sum 
$\Delta=\sum \delta_i\Delta_i$ where 
$\delta_i\in\mathbb Q$ and 
$\Delta_i$ is an irreducible and 
reduced closed subscheme of codimension one 
which is not contained in the singular locus $\Sing(X)$. 
Note that, in this case, the local ring $\mathcal O_{X, \Delta_i}$ 
is a discrete valuation ring. 

We will freely use the notation 
and terminology in \cite{Kollar}. 
In the definition in \cite[Definition~2.8]{Kollar}, 
for a pair $(X, \Delta)$, $\Delta$ is not necessarily effective. 
But, in this paper, we assume 
$\Delta$ is an effective \Q-divisor. 
For a reducible normal variety $X$ and 
an effective \Q-divisor $\Delta$, 
we say $(X, \Delta)$ is lc (resp. dlt, klt) 
if each irreducible component is 
lc (resp. dlt, klt). 

For the definition of (nodes and) slc varieties, 
see Definition~\ref{def-node} and Definition~\ref{def-slc}.
These definitions are the same as \cite[1.41, 5.10]{Kollar}.

\section{Boundaries of dlt surfaces}

In this section, 
we investigate dlt surfaces. 
First, we consider the case of curves. 
The main result of this section is Proposition~\ref{2.1}. 
Proposition~\ref{2.1} is the surface version of Proposition~\ref{2.1curve}. 

\begin{prop}\label{2.1curve}
Let $(X, \Delta)$ be an irreducible lc curve. 
Let $f:X\to R$ be a projective surjective 
morphism such that $f_*\mathcal O_X=\mathcal O_R.$ 
Assume that $S:=\llcorner\Delta\lrcorner\neq 0$ and 
let $T:=f(S)$. 
If $K_X+\Delta\equiv_f 0$, then 
one of the following assertions holds. 
\begin{enumerate}
\item{$f_*\mathcal O_S=\mathcal O_T$.}
\item
{$f_*\mathcal O_S\neq\mathcal O_T$. 
$X\simeq \mathbb P^1$ and $\dim R=0.$ 
Moreover, $\Delta=S$ and $S$ is two distinct points.}
\end{enumerate}
\end{prop}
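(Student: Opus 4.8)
The plan is to split the argument according to $\dim R$, observing first that since $X$ is a one-dimensional irreducible variety and $f$ is surjective, we have $\dim R\in\{0,1\}$. I would dispose of the case $\dim R=1$ first, where I expect case (1) always to hold. In this case $f$ is a projective morphism between one-dimensional varieties whose fibers must be finite — a positive-dimensional fiber would contain an irreducible component of $X$, hence all of $X$, forcing $f$ to be constant and contradicting $\dim R=1$ — so $f$ is proper and quasi-finite, hence finite. Since $f$ is finite we have $X\cong\Spec_R(f_*\mathcal O_X)$, and the hypothesis $f_*\mathcal O_X=\mathcal O_R$ upgrades this to an isomorphism $f$. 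Then $S\cong T$ and $f_*\mathcal O_S=\mathcal O_T$, so we land in case (1).

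The substantive case is $\dim R=0$, where $R=\Spec k$, $f$ is the structure morphism, and $f_*\mathcal O_X=\mathcal O_R$ merely records $H^0(X,\mathcal O_X)=k$. Here $X$ is a normal (hence smooth) projective irreducible curve of some genus $g$, and $S=\llcorner\Delta\lrcorner$ is a nonempty reduced divisor; writing $s:=\#S\ge 1$ and using that $k$ is algebraically closed, I get $f_*\mathcal O_S=H^0(S,\mathcal O_S)\cong k^{\oplus s}$ while $\mathcal O_T=k$. Thus the dichotomy $f_*\mathcal O_S=\mathcal O_T$ versus $f_*\mathcal O_S\neq\mathcal O_T$ is precisely $s=1$ versus $s\ge 2$. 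The engine of the proof is the numerical hypothesis: over a point, $K_X+\Delta\equiv_f 0$ means $\deg(K_X+\Delta)=0$, that is,
\[
2g-2+\deg\Delta=0.
\]

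I would then combine this with the lc condition. Being lc on a smooth curve forces every coefficient of $\Delta$ to lie in $[0,1]$, and $S$ collects exactly the coefficient-one points, so $\deg\Delta\ge\deg S=s$. Substituting into $\deg\Delta=2-2g$ yields $2-2g\ge s\ge 1$, which forces $g=0$ (so $X\simeq\mathbb P^1$) and $\deg\Delta=2$. If $f_*\mathcal O_S=\mathcal O_T$ we are already in case (1); otherwise $s\ge 2$, and since $2=\deg\Delta\ge s$ we must have $s=2$, with the equality $\deg\Delta=\deg S$ forcing $\Delta=S$. Hence $\Delta=S$ is a sum of two distinct points and $X\simeq\mathbb P^1$, which is exactly case (2).

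The argument is mostly structural bookkeeping once the degree equation is available, since everything in the $\dim R=0$ case flows from the single inequality $2-2g\ge s$. I expect the only genuinely delicate point to be the $\dim R=1$ analysis — arguing cleanly that $f$ is finite and that $f_*\mathcal O_X=\mathcal O_R$ then makes $f$ an isomorphism. I should also be careful to interpret ``$\equiv_f 0$ over a point'' correctly as degree zero, and to note that the identification $f_*\mathcal O_S\cong k^{\oplus s}$ uses both the reducedness of $S$ and the algebraic closedness of $k$.
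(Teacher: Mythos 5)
Your proof is correct and takes essentially the same route as the paper: split on $\dim R$, deduce $X\simeq R$ (hence case (1)) when $\dim R=1$, and use $\deg(K_X+\Delta)=0$ together with the lc coefficient bound to force $X\simeq\mathbb P^1$, $\deg\Delta=2$, and $\Delta=S$ when $\dim R=0$. You merely spell out the finiteness argument for $\dim R=1$ and the degree bookkeeping that the paper's terse proof leaves implicit.
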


\begin{proof}
If $\dim R=1$, then we see $X\simeq R$ and 
we obtain (1). 
We may assume $\dim R=0.$ 
Since $\deg(K_X+\Delta)=0$ and 
$\llcorner\Delta\lrcorner\neq 0$, 
we see $X\simeq \mathbb P^1$ and 
$S$ has at most two points. 
If $S$ is one point, then we obtain (1). 
\end{proof}

In the above proposition, 
(1) is a good case. 
Hence, we classify the other case (2) as above. 
For this, we want sufficient conditions 
for $f_*\mathcal O_S=\mathcal O_T$. 




%
\begin{prop}\label{1.3}
Let $f:X\to Y$ be a projective surjective morphism 
between irreducible normal varieties such that 
$f_*\mathcal O_X=\mathcal O_Y.$ 
Assume the following conditions. 
\begin{enumerate}
\item{$(X, \Delta)$ is a \Q-factorial lc surface 
such that $(X, \{\Delta\})$ is klt.}
\item{$S:=\llcorner\Delta\lrcorner\neq 0$ and 
let $T:=f(S).$}
\item{$-(K_X+\Delta)$ is $f$-nef and $f$-big.}
\end{enumerate}
Then, 
$f_*\mathcal O_S
=\mathcal O_T.$ 
In particular, for every $y\in Y$, 
$S\cap f^{-1}(y)$ 
is connected or an empty set. 
\end{prop}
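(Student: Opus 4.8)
The plan is to reduce the whole statement to one cohomological vanishing, $R^1f_*\mathcal{O}_X(-S)=0$, and to read off the connectedness assertion from the resulting identity. First I would push the structure sequence
\[
0\to \mathcal{O}_X(-S)\to \mathcal{O}_X\to \mathcal{O}_S\to 0
\]
forward along $f$. Since $f_*\mathcal{O}_X=\mathcal{O}_Y$, the cokernel of the natural map $\mathcal{O}_Y\to f_*\mathcal{O}_S$ embeds into $R^1f_*\mathcal{O}_X(-S)$, so once this $R^1$ vanishes the map $\mathcal{O}_Y\to f_*\mathcal{O}_S$ is surjective. Because $S=\llcorner\Delta\lrcorner$ is reduced, $f_*\mathcal{O}_S$ is a reduced sheaf of $\mathcal{O}_Y$-algebras; hence a surjection from $\mathcal{O}_Y$ identifies it with the structure sheaf of the scheme-theoretic image of $S$, which is reduced and supported on $f(S)=T$, i.e.\ equal to $\mathcal{O}_T$. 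This already gives $f_*\mathcal{O}_S=\mathcal{O}_T$, and the final clause then follows formally: $(f|_S)_*\mathcal{O}_S=\mathcal{O}_T$ forces the fibres of $f|_S$ to be connected or empty.

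So everything comes down to $R^1f_*\mathcal{O}_X(-S)=0$. The key identity is that, writing $\Delta=S+\{\Delta\}$, one has $-S=(K_X+\{\Delta\})-(K_X+\Delta)$, where $(X,\{\Delta\})$ is klt by (1) and $-(K_X+\Delta)$ is $f$-nef and $f$-big by (3). This is exactly the input for a Kawamata--Viehweg-type vanishing, and in characteristic zero one would simply quote relative KV vanishing and stop. The hard part is that such vanishing can genuinely fail in positive characteristic; this is the main obstacle, and I would get around it by splitting on $\dim Y$.

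When $\dim Y\geq 1$, every fibre of $f$ has dimension at most one, and the pathologies of positive-characteristic vanishing live in top-dimensional absolute cohomology, which this fibre dimension avoids. Concretely, I would apply the relative Kawamata--Viehweg vanishing theorem for klt surfaces of \cite{Tanaka2} to the divisor $-S=(K_X+\{\Delta\})-(K_X+\Delta)$, obtaining $R^1f_*\mathcal{O}_X(-S)=0$ (a question which is in any case local on $Y$).

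When $\dim Y=0$, there is no relative structure to exploit: $Y=\Spec k$, $T$ is a point, and $R^1f_*\mathcal{O}_X(-S)=H^1(X,\mathcal{O}_X(-S))$. Here I would first prove that $X$ is rational. Indeed $-(K_X+\{\Delta\})=-(K_X+\Delta)+S$ is big, being the sum of a big divisor and an effective one, so running the surface minimal model program of \cite{Tanaka1} for the klt pair $(X,\{\Delta\})$ exhibits $X$ as a rational surface. With rationality established, Proposition~\ref{rational-vanish} applies directly with $D=-S$ (a $\mathbb{Q}$-Cartier $\mathbb{Z}$-divisor, since $X$ is $\mathbb{Q}$-factorial) and $B=\{\Delta\}$, because $D-(K_X+B)=-(K_X+\Delta)$ is nef and big; this yields $H^1(X,\mathcal{O}_X(-S))=0$ and finishes the proof.
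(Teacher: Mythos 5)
Your reduction to $R^1f_*\mathcal{O}_X(-S)=0$, your handling of $\dim Y\geq 1$ (rewrite $-S=K_X+\{\Delta\}-(K_X+\Delta)$ and quote the relative vanishing theorem \cite[Theorem~2.12]{Tanaka2}), and your use of Proposition~\ref{rational-vanish} when $X$ is rational all coincide with the paper's proof. The fatal gap is your claim that when $\dim Y=0$ the surface $X$ must be rational because $-(K_X+\{\Delta\})$ is big. This is false. Take $E$ an elliptic curve, $\mathcal{L}$ a line bundle of degree $-e$ on $E$ with $e\geq 1$, $X=\mathbb{P}(\mathcal{O}_E\oplus\mathcal{L})$, $\sigma$ the section with $\sigma^2=-e$, and $F$ a fiber. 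Then $K_X\equiv -2\sigma-eF$, so $-(K_X+\sigma)\equiv \sigma+eF$, which is nef (it has intersection $0$ with $\sigma$, $1$ with $F$, and $b\geq ae>0$ with any other irreducible curve $\equiv a\sigma+bF$) and big (its self-intersection is $e>0$). Hence $(X,\Delta:=\sigma)$ satisfies every hypothesis of the proposition with $Y=\Spec\,k$: it is plt, $\{\Delta\}=0$ so $(X,0)$ is klt, $S=\sigma\neq 0$, and $-(K_X+\Delta)$ is nef and big; yet $X$ is a $\mathbb{P}^1$-bundle over an elliptic curve, not rational. Your MMP argument cannot repair this: on this $X$ the $K_X$-MMP contracts the fiber ray and terminates with the Mori fiber space $X\to E$, and a Mori fiber space structure gives no rationality when its base is a non-rational curve. (Note the failure has nothing to do with characteristic $p$; the example exists in characteristic zero as well.)

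This non-rational case is exactly where the bulk of the paper's proof lives, and your proposal has no substitute for it. In the paper's Step~2, one assumes $X$ is not rational, runs a $(K_X+\Delta)$-MMP to get $X\overset{q}\to X'\overset{h'}\to R$ with $h'$ a Mori fiber space, and proves $\dim R=1$ (using that $X'$ has rational singularities, so the ruling of a resolution descends to $X'$ and forces $\rho(X')\geq 2$). One then shows $-(K_{X'}+\Delta')$ cannot be ample (otherwise the extremal rays are spanned by rational curves, making $R$ and hence $X$ rational), so by Nakai--Moishezon there is a curve $C'$ with $(K_{X'}+\Delta')\cdot C'=0$; the inequality $0=(K_{X'}+\Delta')\cdot C'\geq (K_{X'}+C')\cdot C'$ must be an equality (strict inequality would make $C'\simeq\mathbb{P}^1$, again excluded), which forces $C'\subset\llcorner\Delta'\lrcorner$ and $C'$ horizontal over $R$. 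Finally one applies the Step~1 vanishing to $h:X\to R$ (legitimate since $\dim R=1$ and $-(K_X+\Delta)$ is $h$-nef and $h$-big) to get that every slice $S\cap h^{-1}(r)$ is connected, and concludes that $S$ itself is connected because the horizontal curve $C\subset S$ meets every such slice. Everything in your write-up after ``I would first prove that $X$ is rational'' must be replaced by an argument of this kind; as it stands, the case $\dim Y=0$ with $X$ non-rational --- which genuinely occurs --- is unproved.
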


\begin{proof}
\begin{step}
In this step, we assume $\dim Y\geq 1$ and 
we prove the assertion. 
Consider the exact sequence: 
$$0\to \mathcal O_X(-\llcorner\Delta\lrcorner)\to
\mathcal O_X\to \mathcal O_{\llcorner\Delta\lrcorner}\to 0.$$
Take the push-forward by $f$: 
$$0\to f_*\mathcal O_X(-\llcorner\Delta\lrcorner)\to
\mathcal O_Y\to f_*\mathcal O_{\llcorner\Delta\lrcorner}\to 
R^1f_*\mathcal O_X(-\llcorner\Delta\lrcorner).$$
It is sufficient to prove that 
the last term 
$R^1f_*\mathcal O_X(-\llcorner\Delta\lrcorner)$ vanishes. 
Since 
$$-\llcorner\Delta\lrcorner=K_X+\{\Delta\}-(K_X+\Delta),$$
we have $R^1f_*\mathcal O_X(-\llcorner\Delta\lrcorner)=0$ by \cite[Theorem~2.12]{Tanaka2}. 
\end{step}

\begin{step}
In this step, we assume $\dim Y=0$ and 
we prove the assertion. 
It is sufficient to prove that $S$ is connected. 
Suppose the contrary and let us derive a contradiction. 
We may assume that $X$ is smooth 
by replacing $(X, \Delta)$ with $(Z, \Delta_Z)$ in Proposition~\ref{dltblowup}. 
Since $X$ is a ruled surface, we have a surjective morphism 
$$h:X \to B$$
onto a smooth projective curve $B$ or $X \simeq \mathbb P^2$. 
The latter case is clear, hence we can assume the existence of $h$. 

Since $-(K_X+\Delta)$ is $h$-nef and $h$-big, we can apply Step~1 to $h$. 
If $\llcorner \Delta \lrcorner$ contains an $h$-horizontal prime divisor, 
then we see that $\llcorner \Delta \lrcorner$ is connected by Step~1. 
Thus we may assume that every irreducible component of 
$\llcorner \Delta \lrcorner$ is $h$-vertical. 
Again, by Step~1, we can assume that $h(\llcorner \Delta\lrcorner)=\{Q_1, \cdots, Q_r\}$ with $r \geq 2$ 
and the intersection $\Supp(\llcorner \Delta\lrcorner) \cap h^{-1}(Q_i)$ is connected for every $i$.

We can run a $(K_X+\{\Delta\})$-MMP over $B$ 
that does not change the number of the connected components of $\llcorner \Delta \lrcorner$. 
Indeed, if $\Supp h^{-1}(Q_i)  \subset \Supp \llcorner \Delta \lrcorner,$ 
then we just contract a curve $C$ in $h^{-1}(Q_i)$ with $(K_X+\{\Delta\})\cdot C<0$. 
If $\Supp h^{-1}(Q_i) \not\subset \Supp \llcorner \Delta \lrcorner$, then 
we contract 
a curve $C \subset \Supp h^{-1}(Q_i)$ 
such that $C \not\subset \Supp \llcorner \Delta \lrcorner$ and $(K_X+\{\Delta\})\cdot C<0$. 
Repeating this, we end with a Mori fiber space: 
$$h:X\overset{\varphi}\to X'\overset{h'}\to B$$ 
where $\varphi:X\to X'$ is a composition of extremal birational contractions 
and $h':X'\to B$ is a Mori fiber space. 
Note that $(X', \Delta':=\varphi_*\Delta)$ is \Q-factorial lc, $(X', \{\Delta'\})$ is klt, 
and $-(K_{X'}+\Delta')$ is nef and big. 
Moreover, $\llcorner \Delta'\lrcorner$ contains at least two fibers of $h'$. 

Assume that $-(K_{X'}+\Delta')$ is ample. 
Since $\rho(X') = 2$ by \cite[Theorem~3.27(4)(b)]{Tanaka1}, 
$X'$ has the two $(K_{X'}+\Delta')$-negative extremal rays. 
The extremal ray, not corresponding to $h'$, 
induces a morphism $\psi:X' \to X''$ 
that is a birational contraction or another Mori fiber space structure to a curve. 
For each case, we can apply Step~1 to $\psi$ and obtain a contradiction.

Therefore we may assume that 
$-(K_{X'}+\Delta')$ is not ample. 
Then, by Nakai--Moishezon criterion, there exists a curve $C'$ on $X'$ 
such that $(K_{X'}+\Delta')\cdot C'=0.$ 
This implies $C'^2<0$ and $h'(C')=B.$ 
Since $\rho(X')=2$, we obtain $-(K_{X'}+\Delta')\cdot \Gamma>0$ for a curve $\Gamma \neq C'$. 
If $-(K_{X'}+\Delta')$ is semi-ample, then it induces the contraction of $C'$: $\psi:X' \to X''$ 
and 
$$\psi^{-1}\psi(C') \cap \Supp \llcorner \Delta\lrcorner=C' \cap \Supp \llcorner \Delta\lrcorner$$ 
has at least two points. 
Applying Step~1 to $\psi$, we obtain a contradiction. 
Thus it suffices to show that $-(K_{X'}+\Delta')$ is semi-ample. 
We have 
$$0=(K_{X'}+\Delta')\cdot C'> (K_{X'}+C')\cdot C'$$
because $\Delta'$ contains an $h'$-horizontal curve. 
Then $C'\simeq \mathbb P^1$ by \cite[Theorem~3.19(1)]{Tanaka1}. 
Thus, by Keel's theorem (\cite[Theorem~0.2]{Keel}), $-(K_{X'}+\Delta')$ is semi-ample. 
We are done. 
\end{step}
\end{proof}

\begin{lem}\label{2.4}
Let 
$$f:X\overset{q}\to X'\overset{f'}\to R$$ 
be projective morphisms between normal varieties such that 
$q$ is birational and $f'_*\mathcal O_{X'}=\mathcal O_R.$ 
Assume the following conditions. 
\begin{enumerate}
\item{$(X, \Delta)$ is a \Q-factorial lc surface such that $(X, \{\Delta\})$ is klt.}
\item{$\Ex(q)=:E$ is an irreducible curve. }
\item{$-(K_X+\Delta)$ is $q$-nef. }
\item{$\llcorner\Delta\lrcorner$ is $q$-nef. }
\end{enumerate}
Then, for every $r\in R$, 
the number of connected components of 
$\llcorner\Delta\lrcorner \cap f^{-1}(r)$ 
is equal to the number of connected components of 
$\llcorner q_*\Delta\lrcorner \cap f'^{-1}(r)$. 
\end{lem}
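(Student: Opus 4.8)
The plan is to localize the whole comparison at the single point blown down by $q$. Write $P:=q(E)\in X'$ and $S:=\llcorner\Delta\lrcorner$, so that $q$ restricts to an isomorphism over $X'\setminus\{P\}$ with $q^{-1}(P)=E$. First I would record the divisor identity $\llcorner q_*\Delta\lrcorner=q_*S$: since $q_*E=0$ and, by log canonicity, every coefficient of $\Delta$ is at most $1$, forming the integer part commutes with $q_*$. Next, for $r\in R$ with $r\neq f'(P)$ one has $E\cap f^{-1}(r)=\emptyset$, hence $q$ is an isomorphism over $f'^{-1}(r)$ and $S\cap f^{-1}(r)$ maps isomorphically onto $q_*S\cap f'^{-1}(r)$, so the two component counts agree trivially. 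Thus the entire content sits in the fibre over $r_0:=f'(P)$, where I set $F:=f^{-1}(r_0)\supseteq E$ and $F':=f'^{-1}(r_0)$, and I must compare the connected components of $S\cap F$ with those of $q_*S\cap F'=q(S\cap F)$.

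The governing principle is that collapsing the connected curve $E$ to the point $P$ is a continuous surjection, so it can never split components: it can only glue together those connected components of $S\cap F$ that meet $E$, while leaving all the others untouched. Concretely, the components of $S\cap F$ disjoint from $E$ map isomorphically onto distinct components of $q_*S\cap F'$ that avoid $P$, whereas the images of all components meeting $E$ are joined through $P$ into one component (and $P\in q_*S$ precisely when some component of $S\cap F$ meets $E$). Hence the count is preserved if and only if at most one connected component of $S\cap F$ meets $E$, and I would establish this by splitting into the cases $E\subseteq S$ and $E\not\subseteq S$.

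If $E\subseteq S$ the claim is immediate: being an irreducible, hence connected, curve contained in $S\cap F$, $E$ lies inside a single component $C_0$, and no other component can meet $E$ without being glued to $C_0$, contradicting that it is separate; so exactly one component meets $E$ and the count is preserved (this case uses neither the nefness of $-(K_X+\Delta)$ nor that of $\llcorner\Delta\lrcorner$). If $E\not\subseteq S$ then $S\cap E$ is a finite set, and this is where I invoke the hypotheses through Proposition~\ref{1.3}, applied to the birational contraction $q:X\to X'$: its assumptions hold because $(X,\Delta)$ is $\mathbb{Q}$-factorial lc with $(X,\{\Delta\})$ klt, because $-(K_X+\Delta)$ is $q$-nef and is automatically $q$-big as $q$ is birational, and because $q_*\mathcal O_X=\mathcal O_{X'}$ (automatic for a projective birational morphism onto the normal variety $X'$). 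Proposition~\ref{1.3} then gives that $S\cap q^{-1}(P)=S\cap E$ is connected or empty, hence, being finite, a single point $x$ (or empty). Every component of $S\cap F$ meeting $E$ must contain $x$, so all such components coincide with the one component through $x$; again at most one component meets $E$, and the count is preserved.

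The main obstacle is exactly the case $E\not\subseteq S$: a priori $S$ could meet $E$ in several components that the contraction would fuse, strictly dropping the count, so everything hinges on the connectedness of $S\cap E$, which I extract from Proposition~\ref{1.3} (and thus ultimately from the relative vanishing $R^1q_*\mathcal O_X(-S)=0$). I should also dispose at the outset of the degenerate situations $S=0$ and $S\cap E=\emptyset$, in which both counts agree with no appeal to Proposition~\ref{1.3}. I note that in this route the $q$-nefness of $\llcorner\Delta\lrcorner$ is not actually needed; it is the natural hypothesis for an alternative, purely intersection-theoretic argument that would instead bound $S\cdot E$ directly after the adjunction computation showing $E$ is rational.
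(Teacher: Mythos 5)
Your argument in the case $E\not\subseteq S$ is correct and is essentially the paper's own mechanism: apply Proposition~\ref{1.3} to the birational contraction $q$ (where $q$-bigness of $-(K_X+\Delta)$ is automatic and $q_*\mathcal O_X=\mathcal O_{X'}$ by normality), conclude that $S\cap E$ is connected, and deduce that at most one component of $S\cap f^{-1}(r_0)$ can be glued at $P$. But the case $E\subseteq S$ contains a genuine error, and with it your closing claim that hypothesis (4) is dispensable. The flaw is the identification $q_*S\cap F'=q(S\cap F)$, equivalently your parenthetical assertion that $P\in\Supp\llcorner q_*\Delta\lrcorner$ as soon as some component of $S\cap F$ meets $E$. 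Since $q_*E=0$, the point $P$ lies on $\Supp\llcorner q_*\Delta\lrcorner$ only if some component of $S$ \emph{other than} $E$ passes through a point of $E$. If $E\subseteq S$ and $E$ meets no other component of $S$, the component $C_0=E$ of $S\cap F$ is not ``preserved'': its image $\{P\}$ is disjoint from $\Supp\llcorner q_*\Delta\lrcorner$, and the count drops by one. Concretely, let $q:X\to X'=\mathbb P^2$ be the blow-up of a point, $E$ the exceptional curve, $\Delta=E$, and $R$ a point. Then hypotheses (1)--(3) hold ($X$ is smooth, $(X,E)$ is plt, $(X,\{\Delta\})=(X,0)$ is klt, and $-(K_X+E)\cdot E=2>0$), yet $\llcorner\Delta\lrcorner\cap f^{-1}(r)=E$ has one connected component while $\llcorner q_*\Delta\lrcorner=0$ has none. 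This is exactly the configuration that hypothesis (4) forbids, since here $E^2=-1$.

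So the $q$-nefness of $\llcorner\Delta\lrcorner$ is not the ingredient of some alternative intersection-theoretic route; it is indispensable, and indeed it is the very first thing the paper uses. From $\llcorner\Delta\lrcorner\cdot E\geq 0$ and $E^2<0$ (negativity of the contracted curve) one gets $(S-E)\cdot E>0$ whenever $E\subseteq S$, so some other component of $S$ meets $E$, whence $P\in\Supp\llcorner q_*\Delta\lrcorner$ and $q(\Supp\llcorner\Delta\lrcorner)=\Supp\llcorner q_*\Delta\lrcorner$. Only after this identity does the paper run the connectedness argument via Proposition~\ref{1.3} --- the part you reproduced correctly. To repair your proof, replace ``the case $E\subseteq S$ is immediate'' by this intersection-theoretic step; once $P\in\Supp\llcorner q_*\Delta\lrcorner$ is secured, your observation that all components of $S\cap F$ meeting $E$ coincide (automatic when $E\subseteq S$, since any such component contains a point of the connected curve $E$) does finish that case.
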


\begin{proof}
Let $q(E)=:x'_0$ and $f'(x'_0)=:r_0.$ 
If $E\cap \Supp\llcorner\Delta\lrcorner=\emptyset,$ then 
the assertion is clear. 
Thus, we may assume $E\cap \Supp\llcorner\Delta\lrcorner\neq\emptyset.$ 

We claim $q(\Supp\llcorner\Delta\lrcorner)=\Supp\llcorner q_*\Delta\lrcorner.$ 
The inclusion $q(\Supp\llcorner\Delta\lrcorner)\supset \Supp\llcorner q_*\Delta\lrcorner$ 
is clear. 
Then, it is enough to show $q(E)\in \Supp\llcorner q_*\Delta\lrcorner.$ 
If $E\not\subset \Supp\llcorner\Delta\lrcorner$, then 
$E\cap \Supp\llcorner\Delta\lrcorner\neq\emptyset$ implies 
$q(E)\in \Supp\llcorner q_*\Delta\lrcorner.$ 
On the other hand, 
if $E\subset \Supp\llcorner\Delta\lrcorner$, 
then the $q$-nefness implies that 
there exists a prime component $C\neq E$ of $\llcorner \Delta\lrcorner$ with 
$C\cap E\neq 0.$ 
We see 
$$q(E)\in q(C)\subset \Supp\llcorner q_*\Delta\lrcorner.$$
In each case, we obtain the claim. 

For every $r\in R$, we obtain 
\begin{eqnarray*}
q(\Supp\llcorner\Delta\lrcorner \cap f^{-1}(r))
&=&q(\Supp\llcorner\Delta\lrcorner \cap q^{-1}(f'^{-1}(r)))\\
&=&q(\Supp\llcorner\Delta\lrcorner) \cap f'^{-1}(r)\\
&=&\Supp\llcorner q_*\Delta\lrcorner \cap f'^{-1}(r).
\end{eqnarray*}
Assume that 
the numbers of connected components are different. 
Then there exist at least two connected components 
$X_1$ and $X_2$ of 
$\Supp\llcorner\Delta\lrcorner \cap f^{-1}(r_0)$ such that 
$x'_0\in q(X_1)$ and $x'_0\in q(X_2).$
We take the intersection 
$$\Supp\llcorner\Delta\lrcorner \cap f^{-1}(r_0)
=X_1\amalg X_2\amalg\cdots$$
with $q^{-1}(x'_0)$ and 
we obtain the following equation 
$$\Supp\llcorner\Delta\lrcorner \cap q^{-1}(x'_0)
=(X_1\cap q^{-1}(x'_0))\amalg (X_2\cap q^{-1}(x'_0))\amalg\cdots.
$$
Thus, in order to derive a contradiction, 
it is sufficient to prove that 
$\Supp\llcorner\Delta\lrcorner\cap q^{-1}(x'_0)$ 
is connected. 
Since $-(K_X+\Delta)$ is $q$-nef and $q$-big, 
we can apply Proposition~\ref{1.3}. 
Thus $\Supp\llcorner\Delta\lrcorner \cap q^{-1}(x_0')$ 
is connected. 
\end{proof}

\begin{prop}\label{4.4}
Let $f:X\to Y$ be a projective surjective morphism 
between irreducible normal varieties such that 
$f_*\mathcal O_X=\mathcal O_Y.$ 
Assume the following conditions. 
\begin{enumerate}
\item{$(X, \Delta)$ is a \Q-factorial lc surface 
such that $(X, \{\Delta\})$ is klt.}
\item{$S:=\llcorner\Delta\lrcorner\neq 0$ and 
let $T:=f(S).$}
\item{$K_X+\Delta\equiv_f 0.$}
\item{$T=f(S)\subsetneq Y$.}
\end{enumerate}
Then, 
$f_*\mathcal O_S
=\mathcal O_T.$ 
In particular, for every $y\in Y$, 
$S\cap f^{-1}(y)$ 
is connected or an empty set. 
\end{prop}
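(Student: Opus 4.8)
The plan is to reduce to a relatively minimal model over $Y$ and to transport the count of connected components of $\llcorner\Delta\lrcorner$ in the fibres back and forth by Lemma~\ref{2.4}. Hypothesis (4), that $T=f(S)\subsetneq Y$, is used decisively: it says that $S=\llcorner\Delta\lrcorner$ is vertical over $Y$, which both forces the relevant minimal model program to run inside the fibres and rules out the bad configuration of Proposition~\ref{2.1curve}(2). Note that $\dim Y\geq 1$, since $\dim Y=0$ would give $T=Y$, contradicting (4).

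First I would run a $(K_X+\{\Delta\})$-MMP over $Y$. Since $(X,\{\Delta\})$ is klt by (1), this is available by \cite[Theorem~6.8]{Tanaka1}. Because $K_X+\Delta\equiv_f 0$ by (3), we have $K_X+\{\Delta\}\equiv_f -S$, so a $(K_X+\{\Delta\})$-negative extremal ray over $Y$ is spanned by a curve $C$ with $S\cdot C>0$; as $(K_X+\Delta)\cdot C=0$, each such contraction is crepant for $(X,\Delta)$, hence preserves the lc property, $\mathbb Q$-factoriality, and klt-ness of $(X,\{\Delta\})$, and keeps $K_X+\Delta\equiv_f 0$. At every step the contracted curve satisfies $-(K_X+\Delta)\cdot C=0\geq 0$ and $S\cdot C>0\geq 0$, so both $-(K_X+\Delta)$ and $\llcorner\Delta\lrcorner$ are nef over the contraction; thus Lemma~\ref{2.4} applies and the number of connected components of $\llcorner\Delta\lrcorner\cap f^{-1}(y)$ is unchanged. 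After finitely many steps we reach a model $f':X'\to Y$ with $\Delta'=q_*\Delta$ and $S'=\llcorner\Delta'\lrcorner$ on which $K_{X'}+\{\Delta'\}$ is $f'$-nef.

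Next I would check that the program cannot terminate in a Mori fibre space over $Y$: there $-(K_{X'}+\{\Delta'\})\equiv_{f'}S'$ would be $f'$-ample, so $S'$ would dominate $Y$ and give $T=Y$, which is exactly the excluded configuration of Proposition~\ref{2.1curve}(2). Hence on the end model $-S'$ is $f'$-nef, i.e.\ $S'\cdot C\leq 0$ for every $f'$-vertical curve $C$. Since $S'$ is effective and vertical, the negativity lemma (Zariski's lemma on the negative semidefinite fibres) forces the part of $S'$ in each fibre to be either empty or a rational multiple of the whole connected fibre; in either case $S'\cap f'^{-1}(y)$ is connected or empty. Pulling this back through Lemma~\ref{2.4}, $\llcorner\Delta\lrcorner\cap f^{-1}(y)$ is connected or empty for every $y\in Y$.

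Finally, to upgrade this topological statement to $f_*\mathcal O_S=\mathcal O_T$, I would feed the connectedness into the exact sequence $0\to\mathcal O_X(-S)\to\mathcal O_X\to\mathcal O_S\to 0$ as in Proposition~\ref{1.3}, using $-S=K_X+\{\Delta\}-(K_X+\Delta)$ together with \cite[Theorem~2.12]{Tanaka2} on the end model, where the positivity of $S'$ from the contracted rays is available. The main obstacle I anticipate is precisely this last step: since $-(K_X+\Delta)\equiv_f 0$ is $f$-nef but not $f$-big, the vanishing theorem does not apply directly on $X$, and Proposition~\ref{1.3} cannot be invoked verbatim. One must therefore either establish the sheaf equality on $X'$ and descend it along $q$, or argue that the connectedness of the reduced fibres of $S\to T$, combined with the normality of $Y$ and the $S_2$ hypothesis, already yields $f_*\mathcal O_S=\mathcal O_T$ through the Stein factorisation of $S\to T$.
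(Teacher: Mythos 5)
Your MMP-plus-Lemma~\ref{2.4} argument for the connectedness statement is essentially sound: since $K_X+\{\Delta\}\equiv_f -\llcorner\Delta\lrcorner$, every $(K_X+\{\Delta\})$-negative extremal ray over $Y$ is spanned by a curve $C$ with $\llcorner\Delta\lrcorner\cdot C>0$, so both nefness hypotheses of Lemma~\ref{2.4} hold at every step; hypothesis (4) does exclude a Mori fibre space; and on the end model $-S'$ is nef over $Y$, so Zariski's lemma (resp.\ the negativity lemma, when $f$ is birational) shows that $S'\cap f'^{-1}(y)$ is all of the connected fibre or empty. This is a legitimate variant of the paper's treatment of the case $\dim Y=1$, where instead of a full relative MMP one contracts one by one exactly the components of $\Supp(f^*P)$ not contained in $S$, until the fibre support equals the boundary and connectedness of the fibres of the resulting morphism finishes the argument.

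However, the proposal does not prove the actual assertion $f_*\mathcal O_S=\mathcal O_T$, and the two repairs you sketch for this last step do not work as stated. Connectedness of the fibres of $S\to T$ never suffices by itself when $\dim T=1$: the Stein factorization of $S\to T$ is then finite and bijective, but a finite bijective morphism need not be an isomorphism (the normalization of a cuspidal curve, or a Frobenius morphism in characteristic $p$), so some cohomological input is indispensable there. The missing observation is that this problematic case occurs only when $\dim Y=2$, i.e.\ when $f$ is birational; and there your stated reason for not applying Proposition~\ref{1.3} --- that $-(K_X+\Delta)\equiv_f0$ is not $f$-big --- is false: relative bigness is automatic for a birational morphism (the generic fibre is a point), so $-(K_X+\Delta)$ is $f$-nef and $f$-big and Proposition~\ref{1.3} applies verbatim, which is exactly how the paper disposes of that case in one line. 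In the remaining case $\dim Y=1$, condition (4) forces $T$ to be a finite set of reduced closed points; then $S$ is the disjoint union of the sets $S\cap f^{-1}(P)$ for $P\in T$, and each of these is reduced, connected and proper over the algebraically closed field $k$, whence $H^0(S\cap f^{-1}(P),\mathcal O)=k$ and $f_*\mathcal O_S=\mathcal O_T$ follows from your connectedness statement with no vanishing theorem at all. With these two points supplied your argument closes; without them it is incomplete precisely at the conclusion of the proposition.
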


\begin{proof}
By (4), we have $\dim Y\neq 0.$ 
If $\dim Y=2,$ then 
the assertion follows from Proposition~\ref{1.3}. 
Thus we may assume  $\dim Y=1.$ 
It is sufficient to prove that 
$\mathcal O_Y=f_*\mathcal O_X\to f_*\mathcal O_S$ is surjective. 
Since the problem is local, by shrinking $Y$, 
we may assume that $f(S)=P\in Y$. 
If $S$ is connected, 
then $f_*\mathcal O_{S}\simeq \mathcal O_P$ 
and $\mathcal O_Y\to f_*\mathcal O_S$ is surjective. 
Therefore, it is sufficient to prove that 
$S$ is connected. 
We define a reduced 
divisor $D$ by 
$$
S+D=\Supp (f^*P). 
$$ 
If $D=0$, then $S$ is connected since 
$S=\Supp (f^*P)$. 
Therefore, 
we assume that $D\ne 0$. 
Then, there exists an irreducible curve 
$E\subset\Supp D$ such that $E\cap S\neq 0.$ 
We see $(K_X+\{\Delta\})\cdot E<0.$ 
Thus, we obtain a birational morphism $q:X\to X'$ 
such that $\Ex(q)=E.$ 
Let $\Delta':=q_*\Delta$. 
By Lemma~\ref{2.4}, if $\Supp\llcorner\Delta'\lrcorner$ is connected, 
then so is  $\Supp\llcorner\Delta\lrcorner$. 
We can repeat this argument and 
we obtain a projective morphisms 
$$f:X\overset{\tilde q}\to X''\overset{f''}\to Y$$ where 
$\tilde q$ is a birational morphism such that $\Ex(\tilde q)=\Supp D$. 
Let $\Delta'':=\tilde q_*\Delta$. 
It is sufficient to show that 
$\Supp\llcorner\Delta''\lrcorner$ is connected. 
This follows from $\Supp\llcorner\Delta''\lrcorner=\Supp(f''^*P).$ 
\end{proof}

In Proposition~\ref{2.1}, 
the most complicated case is the Mori fiber space 
to a curve. 
Thus we investigate this case in the following lemma. 

\begin{lem}\label{2.3}
Let $f':X'\to R$ be a projective surjective morphism 
between normal varieties such that $f'_*\mathcal O_{X'}=\mathcal O_R.$ 
Assume the following conditions. 
\begin{enumerate}
\item{$(X', \Delta')$ is a \Q-factorial lc surface 
such that $(X', \{\Delta'\})$ is klt.}
\item{$S':=\llcorner\Delta' \lrcorner\neq 0$.}
\item{$K_{X'}+\Delta'\equiv_{f'} 0$.}
\item{There is a 
$(K_{X'}+\{\Delta'\})$-negative 
extremal contraction $g':X'\to V$ over $R$ 
such that $\dim V=1.$}
\end{enumerate}
Then the $g'$-horizontal part $(S')^h$ of 
$S'$ satisfies 
one of the following assertions. 
\begin{enumerate}
\item[$(a)$]{$(S')^h=S_1'$, 
which is a prime divisor, and $[K(S_1'):K(V)]=2.$}
\item[$(b)$]{$(S')^h=S_1'$, 
which is a prime divisor, and $[K(S_1'):K(V)]=1.$}
\item[$(c)$]{$(S')^h=S_1'+S_2'$, 
where each $S_i'$ is a prime divisor, 
and $[K(S_i'):K(V)]=1.$}
\end{enumerate}
Furthermore, 
there is a \Q-Cartier \Q-divisor $D_V$ on $V$ such that 
$K_X+\Delta=g'^*(D_V)$.\\
In the case $(b)$, 
$f'_*\mathcal O_{S'}=\mathcal O_{f'(S')}$. 
\end{lem}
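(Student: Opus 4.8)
The plan is to analyze the horizontal part $(S')^h$ of $S'$ over the base curve $V$ via the $(K_{X'}+\{\Delta'\})$-negative extremal contraction $g':X'\to V$ of condition (4), which makes $X'$ a Mori fiber space over $V$ with one-dimensional fibers. Since $K_{X'}+\Delta'\equiv_{f'}0$ and $\dim V=1$, I first want to understand how $S'$ meets a general fiber $F$ of $g'$. Restricting $K_{X'}+\Delta'$ to a general fiber $F\simeq \mathbb P^1$ and using that $(X',\{\Delta'\})$ is klt (so the fractional part meets $F$ in small degree) together with $(K_{X'}+\Delta')\cdot F=0$, the adjunction-type computation $\deg(K_F+\Delta'|_F)=0$ forces $S'|_F=\llcorner\Delta'\lrcorner|_F$ to have total degree two on $\mathbb P^1$. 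This is exactly the numerical constraint that yields the three cases: the horizontal part either is a single prime divisor mapping $2:1$ onto $V$ (case $(a)$), or a single prime divisor mapping $1:1$ (case $(b)$), or splits into two prime divisors each mapping $1:1$ (case $(c)$). The enumeration of cases is the degree-two bookkeeping and should be routine once the restriction to the general fiber is controlled.

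For the \emph{Furthermore} clause, the plan is to produce the $\mathbb Q$-divisor $D_V$ on $V$ with $K_{X'}+\Delta'=g'^*D_V$. Since $g'$ is a contraction with $f'_*\mathcal O_{X'}=\mathcal O_R$ factoring as $X'\to V\to R$, and $K_{X'}+\Delta'\equiv_{f'}0$, the divisor $K_{X'}+\Delta'$ is numerically trivial on every fiber of $g'$ (as each such fiber lies in a fiber of $f'$). By the extremal-ray structure of the Mori fiber space $g'$—the relative Picard number is one—numerical triviality on fibers upgrades to $\mathbb Q$-linear (indeed $g'$-numerical) triviality, so $K_{X'}+\Delta'$ is pulled back from $V$; this gives the desired $D_V$. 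I should note the statement writes $K_X+\Delta=g'^*D_V$, which I read as $K_{X'}+\Delta'=g'^*D_V$ on $X'$.

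For the final assertion in case $(b)$, where $(S')^h=S_1'$ is a single prime divisor with $[K(S_1'):K(V)]=1$, the goal is $f'_*\mathcal O_{S'}=\mathcal O_{f'(S')}$. Here I plan to exploit that the map $S_1'\to V$ induced by $g'$ is birational (degree one), hence after passing along $V\to R$ the horizontal component dominates $f'(S')$ with generic degree one. The point is to check that $\mathcal O_{f'(S')}\to f'_*\mathcal O_{S'}$ is surjective; combining the degree-one condition with the connectedness-type output already available (for instance Proposition~\ref{1.3}, since $-(K_{X'}+\{\Delta'\})=K_{X'}+\{\Delta'\}-2(K_{X'}+\{\Delta'\})$-style manipulations let one bound the relevant $R^1$), one gets $f'_*\mathcal O_{S'}=\mathcal O_{f'(S')}$ by normality of the degree-one image.

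The main obstacle I anticipate is the degree-two restriction step in positive characteristic: controlling $S'|_F$ on the general fiber and, more delicately, handling the case $(a)$ where $S_1'\to V$ is $2:1$. As the introduction warns, in characteristic two a degree-two extension $K(S_1')/K(V)$ need not be separable, so the usual Galois/ramification reasoning that distinguishes $(a)$ from $(b)$ and $(c)$ can fail, and I expect the careful treatment of the inseparable case—ensuring the numerical degree-two count survives wild ramification and the section-counting for $f'_*\mathcal O_{S'}$ in cases $(a)$ versus $(b)$—to be where the real work lies.
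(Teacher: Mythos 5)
Your enumeration of cases $(a)$, $(b)$, $(c)$ follows the paper's route (restrict to a general fiber and count degrees), though you skip a step the paper is careful about in positive characteristic: to know the general fiber $F$ is $\mathbb P^1$ one first needs general fibers to be geometrically integral, which the paper gets from separability of $K(X')/K(V)$ (cf. \cite[Lemma~7.2]{Badescu}) and then adjunction on a fiber avoiding $\Sing(X')$. However, there are two genuine gaps in the rest. First, the \emph{Furthermore} clause: you assert that $g'$-numerical triviality plus relative Picard number one ``upgrades'' formally to $K_{X'}+\Delta'=g'^*D_V$. This descent is not formal: a numerically trivial line bundle on a singular fiber need not be trivial, and the characteristic-zero proof of the descent statement in the contraction theorem uses Kawamata--Viehweg vanishing, which is unavailable here. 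The paper instead invokes the abundance theorem over the base (\cite[Theorem~18.4]{Tanaka1}) to get $K_{X'}+\Delta'\sim_{\mathbb Q, g'}0$, i.e.\ $m(K_{X'}+\Delta')=g'^*D$ for some $m$ and some divisor $D$ on $V$. Your route could be salvaged only by citing an appropriate positive-characteristic contraction/descent theorem; as written it is an unsupported claim at exactly the point where positive characteristic matters.

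Second, and more seriously, the final assertion in case $(b)$ is not proved. Your appeal to Proposition~\ref{1.3} fails: that proposition requires $-(K_{X'}+\Delta')$ to be $f'$-nef \emph{and $f'$-big}, whereas here $K_{X'}+\Delta'\equiv_{f'}0$, so the bigness hypothesis is violated (and when $\dim R=1$ one has $f'(S')=R$, so Proposition~\ref{4.4} is unavailable too). Moreover ``normality of the degree-one image'' ignores the vertical components of $S'$, which are the whole point: $f'_*\mathcal O_{S'}=\mathcal O_R$ is a statement about how the vertical fibers contained in $S'$ glue onto $S_1'$. The paper's actual argument writes $S'=S_1'+F_1+\cdots+F_r$ with each $F_i$ a reduced fiber, notes $f'|_{S_1'}:S_1'\to R$ is an isomorphism, and runs an induction on $r$ using the Mayer--Vietoris sequence $0\to \mathcal O_{S'}\to \mathcal O_{S'-F_r}\oplus \mathcal O_{F_r}\to \mathcal O_{(S'-F_r)\cap F_r}\to 0$, the reducedness of $(S'-F_r)\cap F_r\simeq S_1'\cap F_r$, and the relative vanishing $R^1f'_*\mathcal O_{S'}=0$ deduced from $R^1f'_*\mathcal O_{X'}=0$ (\cite[Theorem~2.12]{Tanaka2}); in the $\dim R=0$ case it uses irreducibility of all fibers of the Mori fiber space to get connectedness of $S'$. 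None of this is in your sketch. Finally, your anticipated difficulty is misplaced: inseparability of $K(S_1')/K(V)$ plays no role in this lemma, since $(a)$, $(b)$, $(c)$ are purely numerical degree counts; the separable/inseparable dichotomy only becomes relevant later (in Proposition~\ref{2.1}).
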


\begin{proof}
The assumption 
$(3)$ means $K_{X'}+\Delta'\equiv _{g'} 0.$ 
Thus, by $(4)$, 
$\llcorner\Delta'\lrcorner$ is $g'$-ample. 
We see $(S')^h\neq 0.$ 

We prove that 
general fibers of $g':X'\to V$ are $\mathbb{P}^1$. 
The dimension of every fiber is one. 
Since $\dim V=1$ and $f'_* \mathcal O_{X'}=\mathcal O_V$, 
the field extension 
$K(X')/K(V)$ is algebraically closed and separable 
(cf. \cite[Lemma~7.2]{Badescu}). 
Therefore general fibers are geometrically integral. 
Let $F$ be a general fiber of $g'$, that is, 
$F$ is a fiber which is 
a proper integral curve such that 
$F\cap \Sing(X)=\emptyset$. 
The adjunction formula implies 
$$(K_{X'}+F)\cdot F=K_{X'}\cdot F=
-\Delta'\cdot F\leq -(S')^h\cdot F<0. $$
This means $F\simeq \mathbb P^1.$

By $(K_{X'}+F)\cdot F=-2$, we have 
$(S')^h\cdot F\leq 2$ for a general fiber $F$. 
Therefore one of $(a)$, $(b)$ and $(c)$ holds. 
By the abundance theorem (\cite[Theorem~6.7]{Tanaka1}), we see 
$K_{X'}+\Delta'\sim_{\mathbb Q, g'}0$. 
This means $m(K_{X'}+\Delta')=g'^*(D)$ for some 
integer $m$ and some \Z-divisor $D$ on $V$. 
We define a \Q-divisor $D_V$ by $D=mD_V.$ 

Assume $(b)$ and let us prove $f'_*\mathcal O_{S'}=\mathcal O_{f'(S')}$. 
Since $\dim V=1$, we have $\dim R=0$ or $\dim R=1.$ 
Assume $\dim R=0$. 
It is sufficient to prove that $S'$ is connected. 
This holds because all of the fibers of $g'$ 
are irreducible and $(S')^h\neq 0$. 
Assume $\dim R=1$. 
Then, we see $f'(S')=V\simeq R.$ 
Since $S_1'$ and $R$ are birational, 
the morphism $f|_{S_1'}:S_1'\to R$ 
is an isomorphism. 
We can write 
$$S'=S_1'+F_1+\cdots+F_r$$
where each $F_i$ is the reduced subscheme whose 
support is a fiber of $g'$. 

We prove $f'_*\mathcal O_{S'}=\mathcal O_R$ by the induction on $r$. 
If $r=0$, then the assertion 
follows from $S_1'\simeq R$. 
Assume $r>0.$ 
Consider the exact sequence: 
$$0\to \mathcal O_{S'}\to
\mathcal O_{S'-F_r}\oplus\mathcal O_{F_r}\to 
\mathcal O_{(S'-F_r)\cap F_r}\to 0.$$
The last map defined by the difference. 
Note that the last term 
is the scheme-theoretic intersection. 
It is easy to see that 
$(S'-F_r)\cap F_r \simeq S_1'\cap F_r.$ 
Then $(S'-F_r)\cap F_r$ is reduced because 
$S_1'\simeq R$. 
Consider the push-forward of the above exact sequence: 
$$0\to f'_*\mathcal O_{S'}\to 
f'_*\mathcal O_{S'-F_r}\oplus f'_*\mathcal O_{F_r}\to 
f'_*\mathcal O_{(S'-F_r)\cap F_r}\to R^1f'_*\mathcal O_{S'}.$$
We see $R^1f'_*\mathcal O_{X'}=0$ by \cite[Theorem~2.12]{Tanaka2}. 
This implies $R^1f'_*\mathcal O_{S'}=0.$ 
Since $F_r$ and $(S'-F_r)\cap F_r$ are reduced, 
we have $f'_*\mathcal O_{F_r}\simeq 
f'_*\mathcal O_{(S'-F_r)\cap F_r}\simeq \mathcal O_{f'(F_r)}.$ 
This means $f'_*\mathcal O_{S'}\to
f'_*\mathcal O_{S'-F_r}$ is an isomorphism. 
By the induction hypothesis, 
we obtain $f'_*\mathcal O_{S'}\simeq
f'_*\mathcal O_{S'-F_r}\simeq \mathcal O_R.$
\end{proof}

\begin{rem}
In the last argument in the above proof, 
we use the following fact. 
Let $A$ be a ring and 
let $M, N, L$ and $P$ are $A$-modules. 
Assume the exact sequence 
$$0\to M\overset{(\varphi_1, \varphi_2)}\to 
N\oplus L\overset{\psi-\theta}\to P\to 0. $$
If $\theta:L\to P$ is an isomorphism, then 
$\varphi_1:M\to N$ is also an isomorphism. 
\end{rem}

We can prove the following main result 
in this section.

\begin{prop}\label{2.1}
Let $(X, \Delta)$ be an irreducible dlt surface. 
Let $f:X\to R$ be a projective surjective 
morphism such that $f_*\mathcal O_X=\mathcal O_R.$ 
Assume that $S:=\llcorner\Delta\lrcorner\neq 0$ and 
let $T:=f(S)$. 
If $K_X+\Delta\equiv_f 0$, then 
one of the following assertions holds. 
\begin{enumerate}
\item{$f_*\mathcal O_S=\mathcal O_T$.}
\item
{$f_*\mathcal O_S\neq\mathcal O_T$. 
There exist a projective surjective $R$-morphism 
$g:X\to V$ to a smooth curve $V$ and 
a \Q-divisor $D_V$ on $V$ such that 
$g_*\mathcal O_X=\mathcal O_V$ and that 
$K_X+\Delta=g^*(D_V)$ as \Q-divisors. 
Every connected component of $S$ 
intersects the $g$-horizontal part $S^h$ of $S$. 
Moreover, the $g$-horizontal part $S^h$ satisfies one of 
the following assertions. 
\begin{enumerate}
\item[(2.1s)]{$S^h=S_1$, which is a prime divisor, 
and $[K(S_1):K(V)]=2.$ 
This field extension is separable. }
\item[(2.1i)]{$S^h=S_1$, which is a prime divisor, 
and $[K(S_1):K(V)]=2.$ 
This field extension is purely inseparable. }
\item[(2.2)]{$S^h=S_1+S_2$, 
where $S_i$ is a prime divisor, and 
$g|_{S_i}:S_i\to V$ is an isomorphism 
for $i=1, 2$.}
\end{enumerate}}
\end{enumerate}
\end{prop}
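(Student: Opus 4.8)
The plan is to follow the strategy of \cite{Fujino}: reduce to a Mori fibre space by running a minimal model program over $R$, and then read off the horizontal structure of $S$ from Lemma~\ref{2.3}. Throughout I use that a dlt surface is \Q-factorial and that dlt forces $(X,\{\Delta\})$ to be klt, so that Proposition~\ref{1.3}, Proposition~\ref{4.4}, Lemma~\ref{2.4} and Lemma~\ref{2.3} are all available. First I would dispose of the easy reductions. If $T=f(S)\subsetneq R$ --- in particular whenever $\dim R=2$, since then $\dim T\le 1$ --- Proposition~\ref{4.4} gives $f_*\mathcal O_S=\mathcal O_T$, which is conclusion (1). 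So I may assume $T=R$ and $\dim R\in\{0,1\}$, equivalently that $S$ has nonzero $f$-horizontal part. From here I argue by contradiction with (1): I assume $f_*\mathcal O_S\ne\mathcal O_T$ and aim to produce the structure in (2).

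Next I run the $(K_X+\{\Delta\})$-MMP over $R$. Because $K_X+\Delta\equiv_f 0$, every contracted curve $C$ satisfies $(K_X+\Delta)\cdot C=0$ and $(K_X+\{\Delta\})\cdot C<0$, hence $\llcorner\Delta\lrcorner\cdot C>0$; thus each birational step $q$ meets the hypotheses of Lemma~\ref{2.4} (both $-(K_X+\Delta)$ and $\llcorner\Delta\lrcorner$ are $q$-nef, and $q$-bigness is automatic for a birational $q$), so $q$ is $(K_X+\Delta)$-crepant and, via $R^1q_*\mathcal O_X(-\llcorner\Delta\lrcorner)=0$ from \cite[Theorem~2.12]{Tanaka2}, one gets $q_*\mathcal O_{\llcorner\Delta\lrcorner}=\mathcal O_{\llcorner q_*\Delta\lrcorner}$. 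Composing the steps into $\tilde q:X\to X'$ yields $K_X+\Delta=\tilde q^*(K_{X'}+\Delta')$ and $f_*\mathcal O_S=f'_*\mathcal O_{S'}$, so the standing assumption $f_*\mathcal O_S\ne\mathcal O_T$ is inherited on $X'$. The MMP cannot terminate at a minimal model: there $K_{X'}+\{\Delta'\}$ would be $f'$-nef while $K_{X'}+\{\Delta'\}\equiv_{f'}-S'$ with $S'$ effective and nonzero, which is impossible (restrict to a general fibre when $\dim R=1$, and use the Hodge index theorem when $\dim R=0$). Hence it ends with a Mori fibre space $g':X'\to V$ over $R$, with $\dim V\in\{0,1\}$ and $g'_*\mathcal O_{X'}=\mathcal O_V$.

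Finally I analyse the fibre space. If $\dim V=0$ then $R$ is a point and $S'\equiv-(K_{X'}+\{\Delta'\})$ is ample; an effective ample divisor on a normal projective surface is connected (if $S'=A\amalg B$ then $A^2=S'\cdot A>0$, $B^2>0$, $A\cdot B=0$, contradicting the Hodge index theorem), so $f'_*\mathcal O_{S'}=k=\mathcal O_{T'}$, against our assumption. Thus $\dim V=1$, and Lemma~\ref{2.3} applies to $g'$, giving $K_{X'}+\Delta'=g'^*D_V$ and the trichotomy $(a)$/$(b)$/$(c)$ for $(S')^h$. Case $(b)$ yields $f'_*\mathcal O_{S'}=\mathcal O_{T'}$, again a contradiction, so I am in $(a)$ or $(c)$. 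Setting $g:=g'\circ\tilde q$ gives $g_*\mathcal O_X=\mathcal O_V$ and $K_X+\Delta=g^*D_V$; as $\tilde q$ leaves the horizontal part unchanged, case $(a)$ becomes (2.1s) or (2.1i) according as the degree-two extension $K(S_1)/K(V)$ is separable or purely inseparable (a degree-two extension is always exactly one of these, the latter only when $p=2$), and case $(c)$ becomes (2.2). That every connected component of $S$ meets $S^h$ follows because on $X'$ the vertical components of $S'$ are whole fibres of $g'$, irreducible since $\rho(X'/V)=1$, each of which meets $(S')^h$; this is transported back along $\tilde q$ using the connectedness of $S\cap\tilde q^{-1}(\mathrm{pt})$ from Proposition~\ref{1.3}.

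I expect the main obstacle to be the faithful transport of information across the birational morphism $\tilde q$: establishing $\tilde q_*\mathcal O_S=\mathcal O_{S'}$, so that $f_*\mathcal O_S=\mathcal O_T$ is genuinely equivalent to its counterpart on the Mori fibre space, and then matching connected components and the horizontal classification on $X$ with those on $X'$. The separable-versus-purely-inseparable split in case $(a)$ is precisely where the characteristic-two phenomena flagged in the introduction enter, and keeping these two possibilities straight is the delicate part of the endgame.
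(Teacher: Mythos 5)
Your proposal is correct and takes essentially the same route as the paper: reduce the vertical/birational cases via Proposition~\ref{4.4}, run the $(K_X+\{\Delta\})$-MMP over $R$, rule out the minimal model and $\dim V=0$ outcomes, and read off the trichotomy $(a)/(b)/(c)$ from Lemma~\ref{2.3}, transporting everything across $\tilde q$ by the vanishing $R^1\tilde q_*\mathcal O_X(-S)=0$ together with Proposition~\ref{1.3}/Lemma~\ref{2.4}. The differences are purely organizational (a standing assumption $f_*\mathcal O_S\neq\mathcal O_T$ instead of the paper's case-by-case derivation of (1), and the Hodge index theorem instead of Serre vanishing for connectedness of an effective ample divisor), not a different argument.
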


\begin{proof}
If $f$ is birational, then 
Proposition~\ref{4.4} implies $(1)$. 
Thus we may assume that $\dim R<\dim X.$ 
We run a $(K_X+\{\Delta\})$-MMP 
on $X$ over $R$. 
The end result is a proper birational morphism 
$q:X\to X'$ over $R$. 
Let $f':X'\to R$ be the induced morphism. 
Since $K_X+\Delta\equiv_f 0,$ 
we obtain $K_{X'}+\Delta'\equiv_{f'} 0$ 
where $\Delta':=q_*\Delta.$ 
Let $S':=\llcorner \Delta'\lrcorner$. 
Then it is easy to see that 
$(X', \Delta')$ is a \Q-factorial lc pair and 
$(X, \{\Delta'\})$ is klt. 
\setcounter{step}{0}
\begin{step}
Assume that 
$(X', \{\Delta'\})$ is a minimal model over $R$. 
Then $K_{X'}+\{\Delta'\}$ 
is $f'$-nef and $K_{X'}+\Delta'\equiv_{f'} 0.$ 
So $-\llcorner \Delta'\lrcorner$ is $f'$-nef. 
If $\dim R=0$, then 
$\llcorner\Delta'\lrcorner=0$ 
because $X'$ is projective. 
Lemma~\ref{2.4} implies $\llcorner\Delta\lrcorner=0.$ 
This case is excluded. 
Assume $\dim R=1.$ 
Since $-\llcorner \Delta'\lrcorner$ is $f'$-nef, 
we see $f'(\llcorner \Delta'\lrcorner)\subsetneq R.$ 
Therefore, by Proposition~\ref{4.4}, we obtain $(1).$ 
\end{step}

\begin{step}
Assume that there exists a Mori fiber space structure 
$g':X'\to V$ over $R$. 
Let 
$$g:X\overset{q}\to X'\overset{g'}\to V.$$
Then 
$-(K_{X'}+\{\Delta'\})$ is $g'$-ample. 
Note that, if $\dim V=1$, then 
we can apply Lemma~\ref{2.3} and every connected component of $S$ 
intersects $S^h$ by Lemma~\ref{2.4}. 

First, assume that $\dim R=0.$ 
If $\llcorner \Delta'\lrcorner$ is connected, 
then we have $(1)$ by Lemma~\ref{2.4}. 
Thus we may assume that 
$\llcorner \Delta'\lrcorner$ is not connected. 

We show $\dim V=1$. 
Assume $\dim V=0$. 
Then $\llcorner \Delta'\lrcorner$ is ample. 
Thus its suitable multiple is an 
effective ample Cartier divisor. 
This must be connected 
by the Serre vanishing theorem. 
This case is excluded. 

Thus we can apply Lemma~\ref{2.3}. 
Since 
all of the fibers of 
the Mori fiber space $g':X'\to V$ are irreducible, 
we see 
$\llcorner\Delta'\lrcorner=S'_1+S'_2$. 
This implies $(2.2)$. 

Second, assume that $\dim R=1.$ 
Then we have $\dim V=1.$ 
Note that $T=R\simeq V$. 
We can apply Lemma~\ref{2.3}. 
Thus we obtain $(a), (b)$ or $(c)$ of Lemma~\ref{2.3}. 
If $(a)$ or $(c)$ holds, then 
$(2)$ holds. 
Thus we may assume that 
$(b)$ of Lemma~\ref{2.3} holds. 
We have $f'_*\mathcal O_{S'}=\mathcal O_T.$ 
Lemma~\ref{2.4} implies $q(S)=S'$. 
By Proposition~\ref{1.3}, 
we have $f_*\mathcal O_S=f'_*\mathcal O_{S'}=\mathcal O_T.$
\end{step}
\end{proof}

\begin{ex}
Let ${\rm char}\,k=2.$ 
Then, there exists a projective dlt surface $(X, \Delta)$ 
and smooth projective curve $R$ which satisfy 
{\rm Proposition~\ref{2.1}$(2.1{\rm i})$}. 
\end{ex}

\begin{proof}[Construction]
Let $X_0:=\mathbb A^2$ and 
let $C_0:=\{(x, y)\in\mathbb A^2\,|\,x=y^2\}$. 
Note that the restriction of the first projection 
to $C_0$ is purely inseparable of degree two. 
Let $X_0\subset X:=\mathbb P^1\times\mathbb P^1$ 
be the natural open immersion and 
let $C$ be the closure of $C_0$ in $X$. 
Let $g:X\to \mathbb P^1=:V=:R$ 
be the first projection. 
It is easy to see that $C$ is smooth 
and $K_X+C\sim g^*\mathcal O_{\mathbb P^1}(-1).$ 
Thus, we see that $(X, \Delta:=C)$ is dlt and 
that $(X, \Delta)$ satisfies Proposition~\ref{2.1}$(2.1{\rm i})$. 
\end{proof}

\section{Normalization of nodes}

In this section, we calculate the normalization of nodal singularities 
to reduce problems for slc varieties to ones for dlt varieties. 
The main theorem of this section is Theorem~\ref{theorem-node}. 
In this section, we do not work over a field and 
we treat noetherian or excellent schemes. 

First we recall the definition of the nodal singularities in the sense of \cite[1.41]{Kollar}. 

\begin{dfn}\label{def-node}
Let $(R, \mathfrak m)$ be a noetherian local ring. 
We say $R$ has a {\em node} (or $R$ is {\em nodal}) if 
there exists an isomorphism $R\simeq S/(f)$ where 
$(S, \mathfrak l)$ is a two-dimensional regular local ring such that 
$f\in\mathfrak l^2$ and that 
$f$ is not a square in $\mathfrak l^2/\mathfrak l^3$. 
\end{dfn}

We mainly use the following notations. 

\begin{nota}\label{nota-node}
Let $(R, \mathfrak m)$ be a nodal noetherian local ring. 
By definition, we can write $R\simeq S/(f)$ where 
$(S, \mathfrak l)$ is a two-dimensional regular local ring such that 
$f\in\mathfrak l^2$ and that 
$f$ is not a square in $\mathfrak l^2/\mathfrak l^3$. 
Take a generator $\mathfrak l=(x, y)$. 
We can write 
$$f=ax^2+bxy+cy^2+g$$
where $a, b, c\in \{0\}\cup S^{\times}$ and $g\in \mathfrak l^3$. 
We set $\overline x:=x+(f)\in R/(f)$ and $\overline y:=y+(f)\in R/(f)$. 
\end{nota}

\begin{rem}\label{rem-node}
We use the same notations as Notation~\ref{nota-node}. 
We show that we may assume 
$$c\in S^{\times}$$ 
by replacing a generator $\{x, y\}$ of $\mathfrak l$. 
If $c \in S^{\times}$, then there is nothing to show. 
If $a\in S^{\times}$, then we exchange $x$ and $y$. 
Since $a, c\in \{0\}\cup S^{\times}$, we assume $a=c=0$. 
By $f\not\in\mathfrak l^3$, we see $b\not\in \mathfrak l$, that is, 
$b\in S^{\times}$. 
Taking another generator $X:=x-y, Y:=y$ of $\mathfrak l=(x, y)=(X, Y)$, 
we obtain 
\begin{eqnarray*}
f
&=&bxy+g\\
&=&b(X+Y)Y+g\\
&=&bXY+bY^2+g.
\end{eqnarray*}
By $b\in S^{\times}$, we may assume $c\in S^{\times}$. 
\end{rem}

We calculate the normalization of nodes. 
We divide the proof into the following two cases: 
$R$ is an integral domain or not. 
In Lemma~\ref{lemma-not-integral}, 
we treat the case where $R$ is not an integral domain. 
In Lemma~\ref{lemma-integral}, 
we treat the case where $R$ is an integral domain.

\begin{lem}\label{lemma-not-integral}
Let $(R, \mathfrak m)$ be a nodal noetherian local ring. 
We use the same notations as {\rm Notation~\ref{nota-node}}. 
Assume that 
$R$ is not an integral domain. 
Then the following assertions hold. 
\begin{enumerate}
\item{$f$ has a decomposition $f=l_1l_2$ with $l_1, l_2 \in S$ which satisfies the following properties. 
\begin{itemize}
\item{$l_1S\neq l_2S$.}
\item{For each $i$, $l_i\in \mathfrak l\setminus \mathfrak l^2$.}
\item{For each $i$, $l_i$ is a prime element of $S$, that is, $l_iS$ is a prime ideal.}
\end{itemize}
}
\item{$l_1$ and $l_2$ satisfies $\mathfrak l=(l_1, l_2)$.}
\item{For each $i$, $S/(l_i)$ is regular. }
\item{The natural homomorphism 
$$\nu:R=S/(f)=S/(l_1l_2) \to S/(l_1)\times S/(l_2)=:T$$
is the normalization.}
\item{$\mathfrak m$ is the conductor of the normalization $\nu:R\hookrightarrow T$, that is, 
$$\mathfrak m=\{r\in R\,|\, rT \subset R\}.$$}
\item{The normalization $\nu:R\hookrightarrow T$ induces 
$$\theta: k(\mathfrak m)=R/\mathfrak m \to T/\mathfrak mT\simeq k(\mathfrak m)\times k(\mathfrak m),$$
where $p_i\circ \theta$ is the identity map for the projection $p_i$ to the $i$-th factor.}
\end{enumerate}
\end{lem}

\begin{proof}
(1) 
Since $S$ is a unique factorization domain, 
we obtain a decomposition of $f$ into prime elements: 
$$f=ul_1^{n_1}\cdots l_r^{n_r}$$
where $u\in S^{\times}$, $n_i\in\mathbb Z_{>0}$ and $l_i$ is a prime element of $S$. 
In particular, $l_i\in \mathfrak l$. 
Then, $f\not\in\mathfrak l^3$ implies $n_1+\cdots+n_r\leq 2$. 
Since $n_1+\cdots+n_r=1$ implies that $R$ is an integral domain, 
we see $n_1+\cdots+n_r=2$. 
Thus, we obtain one of the following two cases: $f=ul_1^2$ or $f=ul_1l_2$ where $l_1S\neq l_2S$. 
By $f\not\in\mathfrak l^3$ and $l_i\in \mathfrak l$, 
we see $l_i\not\in \mathfrak l^2$. 
Then, it is enough to show that the case $f=ul_1^2$ does not occur. 
Suppose $f=ul_1^2$. 
We can write 
$$l_1=\alpha x+\beta y+h$$
where $\alpha, \beta\in \{0\}\cup S^{\times}$ and $h\in \mathfrak l^2$. 
We obtain 
$$f=ul_1^2=u(\alpha x+\beta y+h)^2=u(\alpha x+\beta y)^2+({\rm an\,\,element\,\,of\,\,}\mathfrak l^3).$$
By replacing $f$ with $u^{-1}f$, 
this contradicts the definition of nodes: Definition~\ref{def-node}.

(2) 
Since $R$ is nodal, 
$(l_1, l_2)$ generates $\mathfrak l/\mathfrak l^2$. 
Then Nakayama's lemma implies the assertion. 

(3) 
The assertion follows from (2). 

(4) 
The assertion follows from (3). 

(5) 
Let $I\subset R$ be the conductor. 
The inclusion $\mathfrak m \supset I$ is clear. 
We show the inverse inclusion $(l_1, l_2)=\mathfrak m \subset I$. 
By the symmetry, it suffices to prove $l_1\in I$. 
Take $\xi=(s_1+(l_1), s_2+(l_2))\in S/(l_1)\times S/(l_2)=T$. 
Then, we obtain $l_1\xi=(0+(l_1), l_1s_2+(l_2))$. 
Therefore, $l_1\xi=\nu(l_1s_2)$. 
This is what we want to show. 

(6) 
By $\nu(l_1+l_2)=(l_2+(l_1), l_1+(l_2))$, 
we see $\mathfrak mT=\mathfrak m/(l_1)\times \mathfrak m/(l_2)$.  
This implies the assertion. 
\end{proof}

\begin{lem}\label{lemma-integral}
Let $(R, \mathfrak m)$ be a nodal noetherian local ring. 
We use the same notations as {\rm Notation~\ref{nota-node}}. 
Suppose $c \in S^{\times}$ $(${\rm cf.} {\rm Remark~\ref{rem-node}}$)$. 
Assume that $R$ is an integral domain. 
Consider the following natural injective ring homomorphism  
$$\varphi:R \hookrightarrow R\left[\frac{\overline y}{\overline x}\right]=:T.$$
Then the following assertions hold. 
\begin{enumerate}
\item{The ring homomorphism 
$\theta:S[\frac{y}{x}]/(f/x^2)\to R[\frac{\overline y}{\overline x}]=T,\,\,\, \frac{y}{x}\mapsto \frac{\overline y}{\overline x}$ 
is an isomorphism.}
\item{$T$ is a regular ring. }
\item{One of the following assertions holds. 
\begin{enumerate}
\item[(a)]{$T/\mathfrak mT\simeq k(\mathfrak m)\times k(\mathfrak m)$ and 
the composition homomorphism  
$$k(\mathfrak m)=R/\mathfrak m \to T/\mathfrak mT\simeq k(\mathfrak m)\times k(\mathfrak m) \overset{p_i}\to k(\mathfrak m)$$
is the identity map for $i=1, 2$ 
where $p_i$ is the projection to the $i$-th factor.}
\item[(b)]{$T/\mathfrak mT$ is a field and the natural homomorphism  
$$k(\mathfrak m)=R/\mathfrak m \to T/\mathfrak mT$$
is a field extension with $[T/\mathfrak mT:k(\mathfrak m)]=2.$}
\end{enumerate}}
\item{The equation $(\frac{\overline y}{\overline x})^2+r_1\frac{\overline y}{\overline x}+r_2=0$ 
holds in $ R[\frac{\overline y}{\overline x}]=T$ for some $r_1, r_2\in R$. 
In particular, $T$ is a finitely generated $R$-module.}
\item{$T$ is the integral closure of $R$ in the quotient field $K(R)$.}
\item{The maximal ideal $\mathfrak m$ is the conductor of the normalization, that is, 
$\mathfrak m=\{r\in R\,|\, rT \subset R\}.$}
\end{enumerate}
\end{lem}

\begin{proof}
We use the same notations as Notation~\ref{nota-node}. 

(1)  
Set $z:=\frac{y}{x}\in K(S)$. 
Let us check $f/x^2\in S[\frac{y}{x}]=S[z]$. 
Since $f\in\mathfrak l^2=(x, y)^2$, 
we can write $f=\alpha x^2+\beta xy+\gamma y^2$ 
for some $\alpha, \beta, \gamma\in S$. 
Then we see $f/x^2 \in S[z]$ by the following calculation: 
$$f=\alpha x^2+\beta xy+\gamma y^2=\alpha x^2+\beta x(xz)+\gamma (xz)^2=x^2(\alpha +\beta z+\gamma z^2).$$

Consider the natural homomorphism 
\begin{eqnarray*}
\theta:S\left[\frac{y}{x}\right]/(f/x^2) &\to& R\left[\frac{\overline y}{\overline x}\right]\\
\frac{y}{x}&\mapsto& \frac{\overline y}{\overline x}.\\
\end{eqnarray*}
We prove that $\theta$ is an isomorphism. 
For the time being, we show this assuming the following two assertions. 
\begin{enumerate}
\item[(A)]{The $S$-algebra homomorphism $S/(f) \to S[\frac{y}{x}]/(f/x^2)$ is injective. }
\item[(B)]{$S[\frac{y}{x}]/(f/x^2)$ is an integral domain.}
\end{enumerate}
Consider the following commutative diagram of $S$-algebras: 
$$\begin{CD}
S/(f) @= R\\
@VV{\rm injective}V @VV{\rm injective}V\\
S[\frac{y}{x}]/(f/x^2) @>\theta >> R[\frac{\overline y}{\overline x}].
\end{CD}$$
Note that 
$R[\frac{\overline y}{\overline x}] \subset K(R)=K(S/(f)) \subset K(S[\frac{y}{x}]/(f/x^2))$. 
All of the four rings in the above diagram are contained in 
the quotient field $K(S[\frac{y}{x}]/(f/x^2))$. 
In $K(S[\frac{y}{x}]/(f/x^2))$,  
the element $\frac{y}{x}+(f/x^2)\in S[\frac{y}{x}]/(f/x^2)$ is the same as 
$\frac{\overline y}{\overline x}\in R[\frac{\overline y}{\overline x}]$. 
Therefore we obtain 
$$S\left[\frac{y}{x}\right]/(f/x^2)=R\left[\frac{\overline y}{\overline x}\right].$$

(A) 
We show that the natural map $S/(f) \to S[\frac{y}{x}]/(f/x^2)$ is injective. 
For this, consider the following natural composition map 
$$\psi: S \to S\left[\frac{y}{x}\right] \to S\left[\frac{y}{x}\right]/(f/x^2)$$
and we show $\Ker(\psi)=fS$. 
The inclusion $\Ker(\psi)\supset fS$ is obvious. 
Let us prove the inverse inclusion $\Ker(\psi)\subset fS$. 
Take an element $s\in S$ such that $\psi(s)=0$, that is, 
$s\in \frac{f}{x^2}S[\frac{y}{x}]$. 
We have 
$$s=\frac{f}{x^2}\left(t_0+t_1\frac{y}{x}+\cdots+t_m\frac{y^m}{x^m}\right)$$
where $t_i \in S$. 
Let us show that we can assume $m=0$. 
Assume $m\geq 1$. 
Moreover assume $t_m\in xS$, that is, $t_m=x\tilde t_m$ with $\tilde t_m \in S$. 
Then, by the following calculation: 
$$t_m\frac{y^m}{x^m}=x\tilde t_m\frac{y^m}{x^m}=y\tilde t_m\frac{y^{m-1}}{x^{m-1}},$$
we obtain another expression: $s=\frac{f}{x^2}(t_0+\cdots+t_{m-2}(\frac{y}{x})^{m-2}+t_{m-1}'\frac{y^{m-1}}{x^{m-1}})$ 
for some $t_{m-1}'\in S$. 
Thus, we assume $m\geq 1$ and $t_m\not\in xS$. 
Taking the multiplication with $x^{m+2}$, we obtain 
$$sx^{m+2}=f(t_0x^m+\cdots+t_{m-1}xy^{m-1}+t_my^m).$$
This implies $ft_my^m \in xS$. 
But, both the elements $t_m$ and $y$ are not in $xS$. 
Since $xS$ is a prime ideal, we obtain $f\in xS$. 
Then we can write $f=xg$ with $g\in S$. 
$f\in \mathfrak l^2$ implies $g\in\mathfrak l$. 
Therefore, $f$ is not a prime element, 
which contradicts that $R$ is an integral domain. 
Therefore, we may assume $m=0$ and we obtain 
$$s=\frac{f}{x^2}t_0.$$
Since $f\not \in xS$, we see $t_0\in xS$. 
Repeating this, we see $t_0\in x^2S$, which implies $s\in fS$. 
This is what we want to show. 

(B) 
First we prove that $S[\frac{y}{x}]$ is a unique factorization domain. 
We see that $xS[\frac{y}{x}]$ is a prime ideal because 
$$S\left[\frac{y}{x}\right]/xS\left[\frac{y}{x}\right]\simeq S[Z]/(x, xZ-y) 
\simeq (S/(x, y))[Z]$$
is an integral domain. 
By Nagata's criterion (\cite[Lemma~1]{Nagata}), $S[\frac{y}{x}]$ is a unique factorization domain 
if so is 
$$\left(S\left[\frac{y}{x}\right]\right)\left[\frac{1}{x}\right]=S\left[\frac{1}{x}\right].$$
This ring $S\left[\frac{1}{x}\right]$ is a unique factorization domain because so is $S$.

We show that $S[\frac{y}{x}]/(f/x^2)$ is an integral domain. 
Since $S[\frac{y}{x}]$ is a unique factorization domain, 
let us check that $f/x^2$ is a prime element. 
Assume that there exists a decomposition 
$$\frac{f}{x^2}=\left(s_0+s_1\frac{y}{x}+\cdots+s_k\frac{y^k}{x^k}\right)
\left(t_0+t_1\frac{y}{x}+\cdots+t_l\frac{y^l}{x^l}\right)$$
where $s_i, t_j\in S$ and both the factors in the right hand side 
are not in $(S[\frac{y}{x}])^{\times}$. 
We may assume that, if $k\geq 1$ (resp. $l\geq 1$), 
then $s_k$ (resp. $t_l$) is not in $xS$. 
We show that $k=0$ or $l=0$ holds. 
Assume $k\geq 1$ and $l\geq 1$. 
We consider the following two cases: 
$k=l=1$ and $k+l\geq 3$. 
If $k=l=1$, then we obtain $f=(s_0x+s_1y)(t_0x+t_1y)$. 
This contradicts that $f$ is a prime element. 
If $k+l\geq 3$, then taking the multiplication with $x^{k+l}$, we see $s_kt_ly^{k+l}\in xS$. 
By $s_k\not \in xS$ and $t_l\not\in xS$, 
we have $y^{k+l}\in xS$, which is a contradiction. 
Therefore, $k=0$ or $l=0$ holds. 
By the symmetry, we may assume $l=0$ and we obtain 
$$\frac{f}{x^2}=\left(s_0+s_1\frac{y}{x}+\cdots+s_k\frac{y^k}{x^k}\right)t_0.$$
If $k\geq 1$ and $t_0\in xS$, then, for $t_0=xt_0'$, 
we obtain another expression: 
$\frac{f}{x^2}=(xs_0+\cdots+xs_{k-1}(\frac{y}{x})^{k-1}+ys_k(\frac{y}{x})^{k-1})t_0'$. 
Thus we may assume that $k=0$ or $t_0\not\in xS$ holds. 
If $k=0$, then we obtain the following contradiction: 
$f=x^2s_0t_0.$ 
Assume $t_0\not\in xS$. 
Taking the multiplication with $x^k$, we see $k\leq 2$. 
This implies $f=(s_0x^2+s_1xy+s_2y^2)t_0$. 
Since $s_0x^2+s_1xy+s_2y^2\in\mathfrak m$ and $f\in S$ is a prime element, 
we have $t_0 \in S^{\times} \subset (S[\frac{y}{x}])^{\times}$. 
This is a contradiction.

(2) 
Set $z=\frac{y}{x}$. 
First, we calculate the ring $(S[\frac{y}{x}]/(f/x^2))/(\overline x)$. 
The element $f/x^2$ can be written 
$$\frac{f}{x^2}=\frac{ax^2+bxy+cy^2+g}{x^2}=\frac{ax^2+bx(xz)+c(xz)^2+x^3\tilde g}{x^2}=a+bz+cz^2+x\tilde g$$
for some $\tilde g\in S[z]$. 
Here, since $(S, (x, y))$ is a regular local ring, 
we can check that the homomorphism 
$$S[Z]/(xZ-y) \to S\left[\frac{y}{x}\right],\,\,\, Z \mapsto \frac{y}{x}$$
is an isomorphism. 
Then, we see 
\begin{eqnarray*}
(S\left[\frac{y}{x}\right]/(f/x^2))/(\overline x)
&\simeq& S\left[\frac{y}{x}\right]/((f/x^2)+(x))\\
&\simeq& S[Z]/(xZ-y, a+bZ+cZ^2, x) \\
&\simeq& k(\mathfrak m)[Z]/(\overline a+\overline bZ+\overline cZ^2).
\end{eqnarray*}

Fix a maximal ideal $\mathfrak n$ of $S[\frac{y}{x}]/(f/x^2)$ 
and we show that the local ring $(S[\frac{y}{x}]/(f/x^2))_{\mathfrak n}$ is regular. 

We show $\overline x \in \mathfrak n$. 
Assume $\overline x \not\in \mathfrak n$. 
Then $\mathfrak n$ corresponds to a maximal ideal 
of $R[\frac{\overline y}{\overline x}][\frac{1}{\overline x}]=R[\frac{1}{\overline x}]$, that is, 
$\mathfrak n=(\mathfrak nR[\frac{\overline y}{\overline x}][\frac{1}{\overline x}]) \cap R[\frac{\overline y}{\overline x}]$. 
Since $(R, \mathfrak{m})$ is one dimensional local integral domain and $\overline x\in \mathfrak m$, 
$R[\frac{1}{\overline x}]$ is a field. 
It implies $\mathfrak n=(0)$. 
Then $S[\frac{y}{x}]/(f/x^2)$ is a field. 
On the other hand, 
by the above isomorphism 
$$(S\left[\frac{y}{x}\right]/(f/x^2))/(\overline x)\simeq k(\mathfrak m)[Z]/(\overline a+\overline bZ+\overline cZ^2)$$
and $\overline c\neq 0$, 
there exists a non-zero ideal $(\overline x)$ of $S[\frac{y}{x}]/(f/x^2)$. 
Thus $S[\frac{y}{x}]/(f/x^2)$ is not a field and we obtain a contradiction.

Therefore, $\overline x\in \mathfrak n$. 
To show that 
the local ring $(S[\frac{y}{x}]/(f/x^2))_{\mathfrak n}$ is regular, 
it is enough to prove that the ring 
$$(S\left[\frac{y}{x}\right]/(f/x^2))/(\overline x)\simeq k(\mathfrak m)[Z]/(\overline a+\overline bZ+\overline cZ^2)$$
is regular. 
If $\overline a+\overline bZ+\overline cZ^2$ is irreducible over $k(\mathfrak m)$, then 
the ring $k(\mathfrak m)[Z]/(\overline a+\overline bZ+\overline cZ^2)$ is a field. 
Assume that $\overline a+\overline bZ+\overline cZ^2$ is not irreducible over $k(\mathfrak m)$. 
We have $\overline c\neq 0$. 
There are $\alpha, \beta\in R$ such that 
$$\overline a+\overline bZ+\overline cZ^2=\overline c(Z+\overline \alpha)(Z+\overline \beta).$$
Since $R$ is nodal, we see $\overline \alpha\neq \overline \beta$. 
Therefore, 
$$(S\left[\frac{y}{x}\right]/(f/x^2))/(\overline{x}) \simeq k(\mathfrak m)[Z]/(\overline a+\overline bZ+\overline cZ^2)\simeq k(\mathfrak m) \times k(\mathfrak m).$$
This is what we want to show.

(3) 
Let us calculate $T/\mathfrak mT$. 
By 
$$\mathfrak mT=\mathfrak mR\left[\frac{\overline y}{\overline x}\right]
=(\overline x, \overline y)R\left[\frac{\overline y}{\overline x}\right]
=\overline xR\left[\frac{\overline y}{\overline x}\right],$$
we obtain $T/\mathfrak mT\simeq (S[\frac{y}{x}]/(f/x^2))/(\overline x)$. 
By the proof of (2), we obtain 
$$(S\left[\frac{y}{x}\right]/(f/x^2))/(\overline x)\simeq k(\mathfrak m)[Z]/(\overline a+\overline bZ+\overline cZ^2).$$
If $\overline a+\overline bZ+\overline cZ^2$ is irreducible, then we obtain (b). 
Assume that $\overline a+\overline bZ+\overline cZ^2$ is not irreducible. 
Then, we can write 
$$\overline a+\overline bZ+\overline cZ^2=\overline c(Z+\overline \alpha)(Z+\overline \beta).$$
Since $R$ is nodal, we see $\overline \alpha\neq \overline \beta$. 
This implies (a).

(4) 
By Notation~\ref{nota-node}, 
we have 
$$f=ax^2+bxy+cy^2+g$$
where $a, b, c\in\{0\}\cup S^{\times}$ and $g\in \mathfrak l^3=(x, y)^3$. 
Moreover, we have $c\in S^{\times}$. 
For some $\alpha, \beta, \gamma, \delta\in S$, we obtain 
$$f=ax^2+bxy+cy^2+\alpha x^3+\beta x^2y+\gamma xy^2+\delta y^3,$$
which implies 
\begin{eqnarray*}
\frac{f}{x^2}
&=&a+b\frac{y}{x}+c\left(\frac{y}{x}\right)^2+\alpha x+\beta y+\gamma y\frac{y}{x}+\delta y\left(\frac{y}{x}\right)^2\\
&=&(c+\delta y)\left(\frac{y}{x}\right)^2+(b+\gamma y)\frac{y}{x}+(a+\alpha x+\beta y).
\end{eqnarray*}
By $c\in S^{\times}$ and $\delta y\in \mathfrak l$, we see $c+\delta y\in S^{\times}$. 
Therefore the assertion follows from (1).

(5) 
The assertion follows from (2) and (4). 

(6) 
Let $I:=\{r\in R\,|\, rT \subset R\}$ be the conductor ideal. 
By this definition, $I$ is an ideal of $R$. 
Note that $I$ is also an ideal of $T$. 
Since $R\neq T$, we obtain $1\not\in I$. 
In particular, $I\subset \mathfrak m$. 
Let us show $I\supset \mathfrak m$. 
By (4), we obtain 
$$T=R\left[\frac{\overline y}{\overline x}\right]=R+R\frac{\overline y}{\overline x}.$$
This implies $\overline{x}T\subset R$. 
Thus, $\overline{x}\in I$. 
Since $I$ is an ideal of $T=R[\frac{\overline y}{\overline x}]$, we see 
$\overline{y}=\overline x \frac{\overline y}{\overline x}\in I.$ 
Therefore, $I\supset \overline xR+\overline yR=\mathfrak m$. 
\end{proof}

We say a scheme $X$ is {\em excellent} if $X$ is covered 
by open affine schemes whose corresponding rings are excellent.

Combining Lemma~\ref{lemma-not-integral} and Lemma~\ref{lemma-integral}, 
we obtain the following result.

\begin{prop}\label{prop-node}
Let $X$ be a quasi-compact excellent reduced scheme and 
let $\eta$ be a scheme-theoretic point whose local ring $\MO_{X, \eta}$ is nodal.
Let $S:=\overline{\{\eta\}}$ be the reduced scheme. 
Let $\nu:Y \to X$ be the normalization, 
$D\subset X$ the closed subscheme defined by the conductor and 
$C\subset Y$ its scheme-theoretic inverse image$:$ 
$$\begin{CD}
C:=\nu^{-1}(D) @>{\rm closed}>{\rm immersion}> Y \\
@VVV @VV\nu V\\
D @>{\rm closed}>{\rm immersion}> X.
\end{CD}$$
Then, there exists an open subset $\eta \in X'\subset X$ which satisfies the following properties. 
\begin{enumerate}
\item[(0)]{Set $Y':=\nu^{-1}(X')$, $D':=D\cap X'$, $C':=C\cap Y'$ and $S':=S\cap X'$. }
\item{$D'$ is reduced and $S'=D'$. In particular, 
$D'$ is an integral scheme.}
\item{$\nu|_{C'}:C' \to D'$ satisfies one of the following conditions 
\begin{itemize}
\item{$C'\simeq D'_1\amalg D'_2$ with $D_i'\simeq D$ and each morphism  
$$D'_i\hookrightarrow C' \overset{\nu|_{C'}}\to D'$$
are isomorphism.}
\item{$C'$ is an integral scheme and the field extension $K(C')/K(D')$ 
satisfies $[K(C'):K(D')]=2$.}
\end{itemize}}
\end{enumerate}
\end{prop}

\begin{proof}
We may assume 
$X=\Spec\, A, Y=\Spec\, B, D=\Spec\, A/I$ and $C=\Spec\, B/J$ where $I=J$. 
Let $S_{\eta}:=A\setminus \eta$ where we consider $\eta$ as a prime ideal of $A$. 
There are the following two cases. 
\begin{enumerate}
\item[($\alpha$)]{$\MO_{X, \eta}=A_{\eta}=S_{\eta}^{-1}A$ is not an integral domain.}
\item[($\beta$)]{$\MO_{X, \eta}=A_{\eta}=S_{\eta}^{-1}A$ is an integral domain.}
\end{enumerate}

($\alpha$) 
Assume that $S_{\eta}^{-1}A$ is not an integral domain. 
We can apply Lemma~\ref{lemma-not-integral} to $S_{\eta}^{-1}A$. 
Then, by shrinking $\eta\in\Spec\, A$, 
we obtain the following commutative diagram: 
$$
\begin{CD}
A @>>> A/\mathfrak p_1\times A/\mathfrak p_2\\
@VVV @VVV\\
S_{\eta}^{-1}A @>>> S_{\eta}^{-1}(A/\mathfrak p_1)\times S_{\eta}^{-1}(A/\mathfrak p_2),
\end{CD}$$
where $(0)=\mathfrak p_1\cap \mathfrak p_2$. 
Since $A$ is excellent, for each $i$, 
the regular locus $U_i$ of $\Spec\, A/\mathfrak p_i$ forms an open subset of $\Spec\, A/\mathfrak p_i$. 
Since $S_{\eta}^{-1}(A/\mathfrak p_i)$ is regular,  
we obtain $\eta\in U_i$. 
Therefore, by shrinking $\eta\in\Spec\, A$, 
we may assume that each $A/\mathfrak p_i$ is regular. 
In particular, the homomorphism 
$A \to A/\mathfrak p_1\times A/\mathfrak p_2$ coincides with the normalization. 
Since $S_{\eta}^{-1}(A/I)$ is reduced and $A$ is noetherian, 
we may assume that $A/I$ is reduced by shrinking $\Spec\, A$.  
This implies (1). 
We show (2). 
We have the induced homomorphism 
$$\theta_i:A/I \to (A/(I+\mathfrak p_1))\times (A/(I+\mathfrak p_2)) \to A/(I+\mathfrak p_i),$$
where the latter map is the projection to the $i$-th factor. 
By Lemma~\ref{lemma-not-integral}, $S_{\eta}^{-1}\theta_i$ is an isomorphism. 
Since $X=\Spec\, A$ is noetherian and the kernel and the cokernel of $\theta$ is a finitely generated $A$-modules, 
we obtain the assertion.

($\beta$) 
Assume that $S_{\eta}^{-1}A$ is an integral domain. 
We can apply Lemma~\ref{lemma-integral} to $S_{\eta}^{-1}A$. 
We obtain the following commutative diagram: 
$$
\begin{CD}
A @>\nu >> B\\
@VVV @VVV\\
S_{\eta}^{-1}A @>>> S_{\eta}^{-1}B.
\end{CD}$$
By Lemma~\ref{lemma-integral}, 
$S_{\eta}^{-1}(A/I)$ is reduced. 
This implies (1). 
By Lemma~\ref{lemma-integral}, 
there are the following two cases: 
\begin{enumerate}
\item[(a)]{$S_{\eta}^{-1}(B/J)\simeq S_{\eta}^{-1}(A/I)\times S_{\eta}^{-1}(A/I)$ and 
the composition homomorphism  
$$S_{\eta}^{-1}(A/I) \to S_{\eta}^{-1}(B/J)\simeq S_{\eta}^{-1}(A/I)\times S_{\eta}^{-1}(A/I) \overset{p_i}\to S_{\eta}^{-1}(A/I)$$
is the identity map for $i=1, 2$ 
where $p_i$ is the projection to the $i$-th factor.}
\item[(b)]{$S_{\eta}^{-1}(B/J)$ is a field and the natural homomorphism  
$$S_{\eta}^{-1}(A/I) \to S_{\eta}^{-1}(B/J)$$
is a field extension with $[S_{\eta}^{-1}(B/J):S_{\eta}^{-1}(A/I)]=2.$}
\end{enumerate}
For each case, we obtain (2) by a similar argument to ($\alpha$). 
\end{proof}

The following theorem is the main result in this section.

\begin{thm}\label{theorem-node}
Let $k$ be a field. 
Let $X$ be a pure-dimensional reduced separated scheme of finite type over $k$. 
Assume that $X$ is $S_2$ and, for every codimension one scheme-theoretic point $\eta\in X$, 
the local ring $\MO_{X, \eta}$ is regular or nodal. 
Let $\nu:Y \to X$ be the normalization, 
$D\subset X$ the closed subscheme defined by the conductor and 
$C\subset Y$ its scheme-theoretic inverse image: 
$$\begin{CD}
C:=\nu^{-1}(D) @>{\rm closed}>{\rm immersion}> Y \\
@VV\nu|_C V @VV\nu V\\
D @>{\rm closed}>{\rm immersion}> X.
\end{CD}$$
Let $L$ be an invertible sheaf on $X$ and fix $s\in H^0(Y, \nu^*L^{\otimes 2})$. 
Let $C=\bigcup C_i$ be the irreducible decomposition where each $C_i$ is an integral scheme. 
Assume the following conditions. 
\begin{enumerate}
\item{The equation $g^*(s|_{C_j})=s|_{C_i}$ holds for every birational map 
$g:C_i\dashrightarrow C_j$ such that $\nu|_{C_i}=\nu|_{C_j}\circ g$ holds as rational maps. 
Note that 
$g^*(s|_{C_j})=s|_{C_i}$ means that there exist non-empty open subsets $C_i '\subset C_i$, 
$C_j'\subset C_j$ and 
an isomorphism $g':C_i' \to C_j'$ induced by $g$ such that 
$g'^*(s|_{C_j'})=s|_{C_i'}$.}
\item{For every $i$, there exists $t_i\in H^0(C_i, \nu^*L)$ such that 
$s|_{C_i}=t_i^{\otimes 2}$.}
\end{enumerate}
Then there exists an element $u\in H^0(X, L^{\otimes 2})$ such that $\nu^*u=s$. 
\end{thm}

\begin{proof}
Consider the exact sequence: 
$$0 \to \MO_X \to \nu_*\MO_Y \oplus \MO_D \to \nu_*\MO_C \to 0,$$
which implies 
$$0 \to H^0(X, L^{\otimes 2}) \to H^0(Y, \nu^*L^{\otimes 2}) \oplus H^0(D, L^{\otimes 2}|_D) \to 
H^0(C, \nu^*L^{\otimes 2}|_C).$$
It suffices to show that there exists $t\in H^0(D, L^{\otimes 2}|_D)$ such that $(\nu|_C)^*t=s|_C.$ 
Since $X$ is $S_2$, we can replace $X$ with arbitrary open subscheme $X'$ with ${\rm codim}_X(X\setminus X')\geq 2.$ 
Thus, we may assume that $C$ and $D$ are regular and of pure codimension one. 
We can apply Proposition~\ref{prop-node}. 
Then, by replacing $X$ with its open subscheme, 
$C \to D$ satisfies one of the following properties. 
\begin{enumerate}
\item[(a)]{$C$ is two copies of $D$, that is, $C\simeq D\amalg D$.}
\item[(b)]{$C \to D$ is a finite surjective morphism between integral schemes such that 
$[K(C):K(D)]=2$ and that $K(C)/K(D)$ is separable.}
\item[(c)]{$C \to D$ is a finite surjective morphism between integral schemes such that 
$[K(C):K(D)]=2$ and that $K(C)/K(D)$ is purely inseparable.}
\end{enumerate}
If (a) or (b) holds, then the condition (1) implies that $s|_C$ descends to $D$. 
If (c) holds, then the condition (2) implies that $s|_C$ descends to $D$. 
\end{proof}

\begin{rem}\label{char2-glue}
By the above proof, 
if the characteristic of $k$ is not equal to $2$, then we can drop the second condition (2) in Theorem~\ref{theorem-node}. 
\end{rem}

\section{Abundance theorem for slc surfaces}

The following definition of slc varieties is the same as 
Definition--Lemma 5.10 in \cite{Kollar}. 
For more details, see also \cite[1.41, 5.1, 5.9, 5.10]{Kollar}. 
Moreover, we define sdlt varieties.  

\begin{dfn}\label{def-slc}
Let $X$ be a variety. 
Assume that $X$ is $S_2$ and 
that $X$ is regular or nodal in codimension one. 
Let $\Delta$ be an effective \Q-divisor 
such that $K_X+\Delta$ is \Q-Cartier. 
Let $\nu:Y\to X$ be the normalization and 
we define $\Delta_Y$ by 
$K_Y+\Delta_Y=\nu^*(K_X+\Delta)$. 
We say $(X, \Delta)$ is 
{\em slc} if $(Y, \Delta_Y)$ is lc. 
We say $(X, \Delta)$ is 
{\em sdlt} variety if $(Y, \Delta_Y)$ is dlt and 
every irreducible component of $X$ is normal. 
\end{dfn}

\begin{rem}
(1) 
Note that {\em sdlt} in Definition~\ref{def-slc} and 
{\em semi-dlt} in the sense of \cite[Definition~5.19]{Kollar} are different. 
There is an sdlt variety which is not semi-dlt 
(see the example after \cite[Definition~5.19]{Kollar}). 

(2) 
In characteristic zero, semi-dlt varieties are sdlt by \cite[Definition~5.20]{Kollar}. 
In positive characteristic, we do not know whether the notions of semi-dlt and sdlt have some relations. 
\end{rem}

We recall the $B$-birational maps introduced in \cite{Fujino}. 

\begin{dfn}
Let $(X, \Delta_X)$ and $(Y, \Delta_Y)$ be 
lc varieties (may be reducible). 
We say 
$\sigma:(X, \Delta_X)\dasharrow (Y, \Delta_Y)$ is 
a $B$-{\em birational map} if 
$\sigma:X\dasharrow Y$ is a birational map and 
there exist proper birational morphisms 
$\alpha:W\to X$ and $\beta:W\to Y$ from 
a normal variety $W$ such that 
$\beta=\sigma\circ\alpha$ and 
$\alpha^*(K_X+\Delta_X)=\beta^*(K_Y+\Delta_Y)$. 
Note that $B$-birational maps may permute the irreducible components. 
We define 
$$\Aut(X, \Delta_X):=\{\sigma\in\Aut(X)\,|\,
K_X+\Delta_X=\sigma^*(K_X+\Delta_X)\}.$$
\end{dfn}

To obtain sections on slc varieties, 
we consider the following sections on sdlt varieties. 
The idea is very similar to the admissible sections in \cite{Fujino}.

\begin{dfn}\label{4.1}
Let $(X, \Delta)$ be an $n$-dimensional 
projective sdlt variety with $n\leq 2$. 
Let $X=\bigcup X_i$ be the irreducible decomposition and 
let $\nu:\coprod X_i\to X$ be the normalization. 
We define $\Delta_i$ by 
$K_{X_i}+\Delta_i=(\nu^*(K_X+\Delta))|_{X_i}.$ 
Note that $(X_i, \Delta_i)$ is dlt. 
Let $m$ be a positive integer such that 
$m(K_X+\Delta)$ is Cartier. 
We define $B$-{\em invariant sections} and 
{\em separably gluable sections} as follows. 
\begin{enumerate}
\item{We say $s\in H^0(X, m(K_X+\Delta))$ 
is $B$-{\em invariant} if 
$g^*(s|_{X_j})=s|_{X_i}$ for every 
$B$-birational map 
$g:(X_i, \Delta_{i})\dasharrow (X_j, \Delta_{j})$. }
\item{We say $s\in H^0(X, m(K_X+\Delta))$ 
is {\em separably gluable} if 
$s|_{\coprod_i \llcorner \Delta_i\lrcorner}$ is $B$-invariant. }
\end{enumerate}
We define vector subspaces 
\begin{eqnarray*}
BI(X, m(K_X+\Delta))&:=&\{s\,\,{\rm is\,\,} B-{\rm invariant}\} \subset H^0(X, m(K_X+\Delta))\\
SG(X, m(K_X+\Delta))&:=&\{s\,\,{\rm is\,\,separably\,\,gluable}\} \subset H^0(X, m(K_X+\Delta)).\\
\end{eqnarray*}
Moreover, we define 
$$BI^{(2)}(X, 2m(K_X+\Delta)):=\{t^2\,|\, t\in BI(X, m(K_X+\Delta))\}$$
$$G(X, 2m(K_X+\Delta)):=\{s\,|\, 
s|_{\coprod_i \llcorner \Delta_i\lrcorner} \in BI^{(2)}(\coprod_i \llcorner \Delta_i\lrcorner, 2m(K_X+\Delta)|_{\coprod_i \llcorner \Delta_i\lrcorner})\}$$
We say $s\in H^0(X, 2m(K_X+\Delta))$ is {\em gluable} if $s\in G(X, 2m(K_X+\Delta))$. 
\end{dfn}

\begin{rem}
In characteristic $p\neq 2$, 
we do not need $BI^{(2)}(X, 2m(K_X+\Delta))$ and $G(X, 2m(K_X+\Delta))$. 
For more details, see Remark~\ref{char2-glue} and the proof of Proposition~\ref{4.5}. 
\end{rem}

The following lemma teaches us 
that, in order to obtain sections on an slc surface, 
we should consider gluable sections on a dlt surface. 

\begin{lem}\label{padescendslc}
Let $(X, \Delta)$ be a projective slc surface. 
Let $\nu:Y\to X$ be the normalization and 
let $K_Y+\Delta_Y:=\nu^*(K_X+\Delta)$. 
Let $\mu:(Z, \Delta_Z)\to (Y, \Delta_Y)$ 
be a birational morphism 
from a projective dlt surface $(Z, \Delta_Z)$ 
such that $K_Z+\Delta_Z=\mu^*(K_Y+\Delta_Y)$. 
Then the following assertions hold. 
If $s\in G(Z, 2m(K_Z+\Delta_Z))$, then 
$s=\mu^*\nu^*t$ for some $t\in H^0(X, 2m(K_X+\Delta)).$ 
\end{lem}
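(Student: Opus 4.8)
The plan is to descend $s$ in two stages: first along the birational morphism $\mu$ to obtain a section on $Y$, and then along the finite normalization $\nu$ to obtain the desired $t$ on $X$. Set $L:=\mathcal O_X(m(K_X+\Delta))$, which is invertible since $m(K_X+\Delta)$ is Cartier; then $\nu^*L=\mathcal O_Y(m(K_Y+\Delta_Y))$ and $\mu^*\nu^*L=\mathcal O_Z(m(K_Z+\Delta_Z))$. For the first stage, since $\mu$ is a projective birational morphism onto the normal variety $Y$, we have $\mu_*\mathcal O_Z=\mathcal O_Y$, so the projection formula gives $\mu_*\mu^*\nu^*L=\nu^*L$ and hence $H^0(Z,\mu^*\nu^*L)=H^0(Y,\nu^*L)$. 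Thus $s=\mu^*s'$ for a unique $s'\in H^0(Y,m(K_Y+\Delta_Y))$, and the content of the lemma is the second stage: descending $s'$ from $Y$ to $X$.

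For the second stage I would use the conductor of $\nu$. Because $X$ is $S_2$ and normal crossing in codimension one, the conductor is a reduced divisor $D\subset X$ whose preimage $\bar D:=\nu^{-1}(D)$ is a reduced divisor on $Y$ contained in $\llcorner\Delta_Y\lrcorner$ (the double locus enters $\Delta_Y$ with coefficient one), and $\nu|_{\bar D}\colon \bar D\to D$ is generically two-to-one with a gluing involution $\tau\colon \bar D\to\bar D$. The conductor square is a pushout, giving the exact sequence
$$0\to \mathcal O_X\to \nu_*\mathcal O_Y\oplus\mathcal O_D\to (\nu|_{\bar D})_*\mathcal O_{\bar D}\to 0.$$
Tensoring by $L$ and taking global sections, a section $s'\in H^0(Y,\nu^*L)=H^0(X,\nu_*\nu^*L)$ lies in the image of $H^0(X,L)$ precisely when its restriction $s'|_{\bar D}$ descends to $D$, equivalently when $s'|_{\bar D}$ is $\tau$-invariant. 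So it suffices to prove that preadmissibility of $s$ forces $s'|_{\bar D}$ to be $\tau$-invariant.

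To see this I would compare $\tau$ with the $B$-birational structure on the dlt model. The involution $\tau$ preserves the different, hence is a $B$-birational self-map of the conductor curve (or a $B$-birational map interchanging two of its components), and via $\mu$ it corresponds to a $B$-birational map between the relevant irreducible components of $\llcorner\Delta_Z\lrcorner$, since the strict transform of $\bar D$ on $Z$ is a union of components of $\llcorner\Delta_Z\lrcorner$ mapping birationally to $\bar D$. By hypothesis $s\in PA(Z,m(K_Z+\Delta_Z))$, so by definition $s|_{\coprod\llcorner(\Delta_Z)_i\lrcorner}$ is admissible, hence invariant under every $B$-birational map between components of the boundary. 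Applying this to the map induced by $\tau$ and pushing forward by $\mu$ yields $\tau^*(s'|_{\bar D})=s'|_{\bar D}$, which is exactly the descent condition of the previous paragraph. This produces the desired $t\in H^0(X,m(K_X+\Delta))$ with $s=\mu^*\nu^*t$.

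The main obstacle I anticipate is the identification in the last paragraph: one must check carefully that the gluing involution $\tau$ attached to the slc structure of $X$ is realized, through $\mu$, as a genuine $B$-birational map of the boundary $\llcorner\Delta_Z\lrcorner$ appearing in the definition of admissibility, and that the invariance recorded by admissibility is literally the $\tau$-invariance needed in the conductor sequence. This requires matching the adjunction (different) computed on $\bar D\subset\llcorner\Delta_Y\lrcorner$ with the boundary of the corresponding component of $Z$, and verifying that pushing the admissibility relation forward by $\mu$ recovers the relation on $\bar D$ without loss. A secondary point is to ensure the conductor sequence is well behaved — that $\bar D$ is reduced and the criterion ``$\tau$-invariant $\Leftrightarrow$ descends'' holds — which follows from the $S_2$ and codimension-one normal-crossing hypotheses on $X$.
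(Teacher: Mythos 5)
Your proposal is correct and is essentially the argument the paper intends: its one-sentence proof (``the assertion holds because $X$ is $S_2$ and normal crossing in codimension one'') is exactly an invocation of the conductor-square descent and gluing involution that you spell out, first descending along $\mu$ via $\mu_*\mathcal O_Z=\mathcal O_Y$ and then using preadmissibility to supply the required invariance on the double locus. The two points you flag as needing care --- that the gluing involution $\tau$ is realized as a $B$-birational map of components of $\llcorner\Delta_Z\lrcorner$ so that admissibility applies, and that the normal-crossing-in-codimension-one hypothesis rules out purely inseparable gluing in characteristic $2$ so that $\tau$ exists at all --- are precisely the content hidden in the paper's terse appeal to its hypotheses.
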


\begin{proof}
The assertion holds by Theorem~\ref{theorem-node}. 
\end{proof}

We summarize the basic properties of $B$-invariant sections and (separably) gluable sections.  

\begin{lem}\label{p^eproperties}
Let $(X, \Delta)$ be an $n$-dimensional projective sdlt variety 
with $n\leq 2.$ 
Let $m$ be a positive integer such that 
$m(K_X+\Delta)$ is Cartier. 
\begin{enumerate}
\item{If $s\in BI(X, m(K_X+\Delta))$, 
then $s^2\in BI^{(2)}(X, 2m(K_X+\Delta))$.}
\item{If $t\in BI^{(2)}(X, 2m(K_X+\Delta))$, 
then $t\in BI(X, 2m(K_X+\Delta))$.}
\item{The vector space $BI(X, m(K_X+\Delta))$ 
generates $\mathcal O_X(m(K_X+\Delta))$ 
if and only if 
$BI^{(2)}(X, 2m(K_X+\Delta))$ generates  $\mathcal O_X(2m(K_X+\Delta))$. }
\item{If $s\in SG(X, m(K_X+\Delta))$, 
then $s^2\in G(X, 2m(K_X+\Delta))$. }
\item{If $t\in G(X, 2m(K_X+\Delta))$, 
then $t\in SG(X, 2m(K_X+\Delta))$.}
\item{
If the vector space $SG(X, m(K_X+\Delta))$ 
generates $\mathcal O_X(m(K_X+\Delta))$, then 
$G(X, 2m(K_X+\Delta))$ generates  $\mathcal O_X(2m(K_X+\Delta))$. }
\item{Assume that $X$ is normal and let $S:=\llcorner\Delta\lrcorner\neq 0.$ 
If the map 
$$SG(X, m(K_X+\Delta))\to 
BI(S, m(K_X+\Delta)|_S)$$
is surjective, then so is the map
$$G(X, 2m(K_X+\Delta))\to 
BI^{(2)}(S, 2m(K_X+\Delta)|_S).$$}
\end{enumerate}
\end{lem}

\begin{proof}
(1)(2)(3) These assertions follow from the definition. \\
(4) The assertion follows from 
$(\nu^*s^2)|_{\coprod\llcorner\Delta_i\lrcorner}=((\nu^*s)|_{\coprod\llcorner\Delta_i\lrcorner})^2.$ \\
(5) The assertions follows from (2). \\
(6)(7) The assertions follow from (4). 
\end{proof}

\begin{lem}\label{4.3}
Let $(X, \Delta)$ be a proper lc curve or a 
proper lc surface such that 
$K_X+\Delta$ is semi-ample and 
$S:=\llcorner\Delta\lrcorner\neq 0.$ 
Let $f:=\varphi_{|k(K_X+\Delta)|}:X\to R$ 
be a surjective morphism to 
a projective variety $R$ 
such that $f_*\mathcal O_X=\mathcal O_R.$ 
Let $T:=f(S)$. 
Assume the following conditions. 
\begin{enumerate}
\item[(a)]{$f_*\mathcal O_S=\mathcal O_T$.}
\item[(b)]{There exist sections 
$\{s_i\}_{i=1}^{q}\subset H^0(S, m(K_X+\Delta)|_S)$ 
without common zeros for some $m$. }
\end{enumerate}
Then, for some $r>0$, there exist sections 
$\{u_i\}_{i=1}^{l}\subset H^0(X, rm(K_X+\Delta))$
which satisfy the following conditions. 
\begin{enumerate}
\item{$u_i|_S=s_i^r$ for $1\leq i \leq q$ and 
$u_i|_S=0$ for $q+1\leq i \leq l.$}
\item{$\{u_i\}_{i=1}^{l}$ have no common zeros.}
\end{enumerate}
\end{lem}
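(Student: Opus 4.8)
The plan is to reduce everything to cohomology on the base $R$, so that only Serre vanishing and Serre's global generation theorem are needed and no vanishing theorem on $X$ (which would be dangerous in positive characteristic) enters. Write $L:=K_X+\Delta$. Since $L$ is semi-ample and $f=\varphi_{|kL|}$ satisfies $f_*\mathcal O_X=\mathcal O_R$, there are an ample line bundle $A$ on $R$ and an integer $e>0$ with $eL=f^*A$. I will take $r$ to be a large multiple of $e$ and set $N:=rm/e$, so that $rmL=f^*(A^{\otimes N})$ with $N\gg 0$. By the projection formula and $f_*\mathcal O_X=\mathcal O_R$ we then have $H^0(X,rmL)=H^0(R,A^{\otimes N})$, and using hypothesis (a), namely $f_*\mathcal O_S=\mathcal O_T$, together with $rmL|_S=(f|_S)^*(A^{\otimes N}|_T)$, we also get $H^0(S,rmL|_S)=H^0(T,A^{\otimes N}|_T)$; moreover the restriction $H^0(X,rmL)\to H^0(S,rmL|_S)$ becomes the restriction $H^0(R,A^{\otimes N})\to H^0(T,A^{\otimes N}|_T)$.

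Under the last identification the section $s_i^r\in H^0(S,rmL|_S)$ corresponds to a section $\bar s_i\in H^0(T,A^{\otimes N}|_T)$ with $s_i^r=(f|_S)^*\bar s_i$. Because the $s_i$ have no common zeros on $S$ and $f|_S\colon S\to T$ is surjective, the $\bar s_i$ have no common zeros on $T$. For $N\gg 0$, Serre vanishing gives $H^1(R,\mathcal I_T\otimes A^{\otimes N})=0$, hence $H^0(R,A^{\otimes N})\to H^0(T,A^{\otimes N}|_T)$ is surjective; I choose lifts $w_i\in H^0(R,A^{\otimes N})$ of $\bar s_i$ and set $u_i:=f^*w_i$ for $1\le i\le q$. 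Then $u_i|_S=(f|_S)^*(w_i|_T)=(f|_S)^*\bar s_i=s_i^r$, which is condition (1) on this range.

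It remains to add sections vanishing on $S$ that remove the rest of the base locus. Let $Z\subset X$ be the common zero locus of $u_1,\dots,u_q$; since $u_i=f^*w_i$, we have $Z=f^{-1}(Z_R)$, where $Z_R$ is the common zero locus of $w_1,\dots,w_q$ on $R$. As $w_i|_T=\bar s_i$ have no common zero on $T$, we get $Z_R\cap T=\emptyset$, so $Z$ lies over $R\setminus T$. For $N\gg 0$ the sheaf $\mathcal I_T\otimes A^{\otimes N}$ is globally generated (Serre), so a basis $w_{q+1},\dots,w_l$ of $H^0(R,\mathcal I_T\otimes A^{\otimes N})$ has no common zero on $R\setminus T$, in particular none on $Z_R$. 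Put $u_j:=f^*w_j$ for $q+1\le j\le l$. Each $w_j$ satisfies $w_j|_T=0$, so $u_j|_S=0$, completing condition (1); and on $Z$ the sections $u_{q+1},\dots,u_l$ have no common zero. Since any common zero of $u_1,\dots,u_l$ lies in $Z$, the full family has no common zero, which is condition (2).

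The only delicate points are the two facts that make the base loci line up: that the lifted sections $u_1,\dots,u_q$ can fail to generate only over $R\setminus T$ (guaranteed by the no-common-zeros of $\bar s_i$, which uses hypothesis (a) to view the $\bar s_i$ as honest sections on $T$), and that over $R\setminus T$ the twisted ideal sheaf $\mathcal I_T\otimes A^{\otimes N}$ is globally generated, so sections vanishing on $S$ already suffice there. Both are handled by Serre's theorems once $N$ is large, so the genuinely new input is simply the reduction to $R$; I expect no serious obstacle beyond bookkeeping the divisibility $e\mid rm$ needed to make $rmL$ a pullback.
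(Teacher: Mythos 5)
Your proposal is correct and follows essentially the same route as the paper: both reduce to the base $R$ via the projection formula (using $f_*\mathcal O_X=\mathcal O_R$ and hypothesis (a) to identify $H^0(X,rm(K_X+\Delta))$ with $H^0(R,rmH)$ and $H^0(S,rm(K_X+\Delta)|_S)$ with $H^0(T,rmH|_T)$), lift the $s_i^r$ through the surjection $H^0(R,rmH)\to H^0(T,rmH|_T)$, and then adjoin a basis of $H^0(R, I_T\otimes\mathcal O_R(rmH))$, globally generated for $r\gg 0$, to eliminate common zeros away from $T$. Your write-up merely makes explicit some steps the paper leaves implicit (the divisibility bookkeeping $e\mid rm$ and the verification that the base locus of the lifted sections lies over $R\setminus T$).
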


\begin{proof}
There is an ample \Q-Cartier \Q-divisor 
$H$ on $R$ such that 
$K_X+\Delta\sim_{\mathbb{Q}}f^*H.$ 
For $r\gg 0$, we have the following commutative diagram. 
$$
\begin{CD}
H^0(X, rm(K_X+\Delta)) @>>> 
H^0(S, rm(K_X+\Delta)|_S)\\
@AA\simeq A @AA\simeq A\\
H^0(R, rmH) @>{\rm surjection}>> 
H^0(T, rmH|_T)\\
\end{CD}
$$
Let $u_1,\cdots,u_q\in H^0(X, rm(K_X+\Delta))$ be lifts of 
$s_1^r,\cdots,s_q^r$ and 
let us consider the following corresponding sections. 
$$
\begin{CD}
u_i @>>> 
s_i^r\\
@AAA @AAA\\
u_i' @>>> 
s_i'\\
\end{CD}
$$
We may assume that $r$ is so large that 
$I_T\otimes \mathcal O_R(rmH)$ is 
generated by global sections where 
$I_T$ is the corresponding ideal to the closed subscheme $T$. 
Let $t_{q+1}',\cdots, t_l'$ be the basis 
of $H^0(R, I_T\otimes \mathcal O_R(rmH))$ 
and let 
$u_{q+1}',\cdots,u_l'$ be its image 
to $H^0(R, rmH)$. 
Then $u_1',\cdots, u_l'$ have no common zeros. 
Thus the corresponding sections 
$u_1,\cdots, u_l$ satisfy the desired properties. 
\end{proof}

The following proposition is the key 
to prove the abundance theorem for slc surfaces.

\begin{prop}\label{4.5}
Let $(X, \Delta)$ be 
a projective dlt surface such that 
$S:=\llcorner\Delta\lrcorner\neq 0$. 
Let $m$ be a sufficiently 
large and divisible integer such that $m\in 2\mathbb Z_{>0}$. 
If $K_X+\Delta$ is nef, then 
the following assertions hold. 
\begin{enumerate}
\item[(a)]{The following map is surjective: 
$$G(X, 2m(K_X+\Delta))\to 
BI^{(2)}(S, 2m(K_X+\Delta)|_S).$$}
\item[(b)]{Assume that 
$BI(S, m(K_X+\Delta)|_S)$ generates 
$\mathcal O_S(m(K_X+\Delta)|_S)$. 
Then 
$G(X, 2m(K_X+\Delta))$ 
generates 
$\mathcal O_X(2m(K_X+\Delta)).$}
\end{enumerate}
\end{prop}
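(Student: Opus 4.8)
The plan is to pass to the morphism defined by $K_X+\Delta$ and run the argument through the classification in Proposition~\ref{2.1}. Since $X$ is normal it is a disjoint union of its irreducible components, and every construction below is carried out on each component and then assembled, so we may assume $X$ is irreducible. Abundance is already available for lc surfaces (\cite{Tanaka1}), so $K_X+\Delta$ is semi-ample; let $f:=\varphi_{|k(K_X+\Delta)|}\colon X\to R$ be the associated morphism, so $f_*\mathcal O_X=\mathcal O_R$, $K_X+\Delta\sim_{\mathbb Q}f^*H$ for an ample \Q-Cartier \Q-divisor $H$ on $R$, and $K_X+\Delta\equiv_f 0$. With $S=\llcorner\Delta\lrcorner$ and $T=f(S)$, Proposition~\ref{2.1} puts us in case~(1) or case~(2).

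For (a), in case~(1) we have $f_*\mathcal O_S=\mathcal O_T$; identifying $H^0(X,pm(K_X+\Delta))=H^0(R,pmH)$ and $H^0(S,pm(K_X+\Delta)|_S)=H^0(T,pmH|_T)$ by the projection formula, the restriction map is the restriction $H^0(R,pmH)\to H^0(T,pmH|_T)$, which is surjective for $m\gg 0$ because $H$ is ample. Hence every element of $A^{(p)}(S,pm(K_X+\Delta)|_S)$ lifts, and its lift is preadmissible. In case~(2) there is a fibration $g\colon X\to V$ with $g_*\mathcal O_X=\mathcal O_V$ and $K_X+\Delta=g^*D_V$, and $S^h$ is as in (2.1s), (2.1i) or (2.2). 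Writing a given section as $t=s^p$ with $s\in A(S,m(K_X+\Delta)|_S)$, the heart of the matter is to descend $s^p|_{S^h}$ to some $t_V\in H^0(V,pmD_V)$. In cases~(2.2) and~(2.1s) this comes from admissibility: the sheet-exchange of $S_1+S_2$, respectively the Galois involution of the separable degree-two cover $S_1\to V$, is a $B$-birational self-map preserving $K_{S_1}+\Delta_{S_1}$, so $s$ is invariant and descends. The main obstacle is case~(2.1i), where $S_1\to V$ is purely inseparable of degree $p=2$ and no nontrivial $B$-birational map is available, so $s$ itself need not descend; this is precisely why the statement is phrased with $p$-powers, for $K(S_1)^p\subseteq K(V)$ forces $s^p|_{S_1}$ to be a $p$-th power, hence to lie in the image of $(g|_{S_1})^*$ and to descend. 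Given $t_V$, set $\sigma:=g^*t_V\in H^0(X,pm(K_X+\Delta))=H^0(V,pmD_V)$; then $\sigma|_{S^h}=t|_{S^h}$ by construction, while on the vertical components of $S$ both sections agree because $pm(K_X+\Delta)$ is trivial on fibers and every connected component of $S$ meets $S^h$, so the constant values are matched at the intersection points. Thus $\sigma|_S=t$ and $\sigma\in PA^{(p)}(X,pm(K_X+\Delta))$ lifts $t$.

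For (b), the hypothesis and Lemma~\ref{p^eproperties}(7) give that $A^{(p)}(S,pm(K_X+\Delta)|_S)$ generates $\mathcal O_S(pm(K_X+\Delta)|_S)$. I would prove base-point-freeness of $PA^{(p)}(X,pm(K_X+\Delta))$ by covering $X$ over the base. For $x$ with $f(x)\in T$, the fiber $F=f^{-1}(f(x))$ meets $S$; since $pm(K_X+\Delta)|_F$ is trivial and $F$ is connected with $H^0(\mathcal O_F)=k$, every section is constant on $F$, so choosing $t\in A^{(p)}(S)$ nonzero at a point of $S\cap F$ and lifting it by part~(a) yields a preadmissible section that is a nonzero constant on $F$, hence nonvanishing at $x$ (this also handles $x\in S$). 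For $x$ with $f(x)\notin T$, the pullback of a section of $I_T\otimes\mathcal O_R(pmH)$ nonvanishing at $f(x)$ is a section of $pm(K_X+\Delta)$ vanishing along $S$, hence preadmissible, and nonvanishing at $x$ for $m\gg 0$. These sections have no common zeros, which is the assertion.

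I expect case~(2.1i) of part~(a) to be the only genuinely delicate point: it is the reason for working with $p$-powers at all, the Frobenius relation $K(S_1)^p\subseteq K(V)$ taking the place of the Galois descent that is automatic in characteristic zero.
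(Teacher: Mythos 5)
Your architecture coincides with the paper's: reduce to irreducible $X$, take $f=\varphi_{|k(K_X+\Delta)|}:X\to R$, split according to Proposition~\ref{2.1}, treat case (1) by Serre vanishing on $R$, treat cases (2.1s) and (2.2) by invariance of admissible sections under the Galois involution, respectively the identification $S_2\simeq V\simeq S_1$, and extend the equality from $S^h$ to the vertical components of $S$ by constancy on fibers plus connectedness. (The paper routes this last step through Lemma~\ref{dltonepoint} and a pushforward isomorphism, but your direct version -- sections of a trivial bundle on an integral projective curve are determined by their value at one point, then induct along the dual graph -- is sound, given Proposition~\ref{2.1}(2).) Your fiberwise-constancy proof of (b) is likewise an acceptable repackaging of what the paper extracts from Lemma~\ref{4.3} and Lemma~\ref{p^eproperties}.

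The gap is in case (2.1i), exactly the point you flag as delicate. From $K(S_1)^p\subseteq K(V)$ you conclude that $(s|_{S_1})^p$ ``lies in the image of $(g|_{S_1})^*$ and descends'' to $H^0(V,pmD_V)$. What actually follows is weaker: writing the absolute Frobenius of $S_1$ as $F=G\circ(g|_{S_1})$ with $G:V\to S_1$ the (non-$k$-linear) scheme isomorphism, one gets $(s|_{S_1})^p=(g|_{S_1})^*\bigl(G^*(s|_{S_1})\bigr)$, so the $p$-th power descends to a section of the line bundle $G^*\mathcal O_{S_1}(m(K_X+\Delta)|_{S_1})$ on $V$, not a priori of $\mathcal O_V(pmD_V)$. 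These two bundles have the same pullback to $S_1$, but $\ker\bigl((g|_{S_1})^*:\Pic(V)\to\Pic(S_1)\bigr)=\Pic(V)[p]$ (because $(g|_{S_1})\circ G$ is the absolute Frobenius of $V$), which can be nonzero, e.g.\ for $V$ an ordinary elliptic curve in characteristic $2$; and if the difference class is nontrivial, then the images of $H^0(V,pmD_V)$ and $H^0\bigl(V,G^*\mathcal O_{S_1}(m(K_X+\Delta)|_{S_1})\bigr)$ inside $H^0(S_1,pm(K_X+\Delta)|_{S_1})$ meet only in $0$ (compare divisors of sections under the homeomorphism $g|_{S_1}$), so your descent claim fails. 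This is precisely where the hypothesis $m\in 2\mathbb Z$ -- which your proof never invokes -- enters the paper's argument: writing $m=2m'$, the obstruction bundle $M=(G^{-1})^*\mathcal O_V(2m'D_V)\otimes\mathcal O_{S_1}(-m'(K_X+\Delta)|_{S_1})$ satisfies $F^*M=M^{\otimes 2}\simeq\mathcal O_{S_1}$, whence $\mathcal O_V(2mD_V)\simeq G^*\mathcal O_{S_1}(m(K_X+\Delta)|_{S_1})$ and the descent of $\tilde s^{\,2}$ to $t=G^*\tilde s\in H^0(V,2mD_V)$ becomes legitimate. Without this torsion-killing step, or a substitute for it, case (2.1i) -- and with it part (a) -- is incomplete.
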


\begin{proof}
We may assume that $X$ is irreducible. 
By the abundance theorem (cf. \cite{Fujita}), 
we obtain 
$f:=\varphi_{|k(K_X+\Delta)|}:X\to R$ 
such that $f_*\mathcal O_X=\mathcal O_R.$ 
Let $f(S)=:T.$ 
Then (1) or (2) holds. 
\begin{enumerate}
\item{$f_*\mathcal O_S= \mathcal O_T.$}
\item{$f_*\mathcal O_S\neq \mathcal O_T.$}
\end{enumerate}

(1)Assume $f_*\mathcal O_S= \mathcal O_T.$ 
By the diagram of the proof of 
Lemma~\ref{4.3}, the map 
$$H^0(X, m(K_X+\Delta))\to 
H^0(S, m(K_X+\Delta)|_S)$$
is surjective. 
Thus the map 
$$SG(X, m(K_X+\Delta))\to BI(S, m(K_X+\Delta)|_S)$$
is also surjective. 
Thus assertion (a) follows from 
Lemma~\ref{p^eproperties}(7). 
We prove (b). 
Since $BI(S, m(K_X+\Delta)|_S)$ 
generates $\mathcal O_S(m(K_X+\Delta)|_S)$, 
$SG(X, m(K_X+\Delta))$ also generates 
$\mathcal O_X(m(K_X+\Delta))$ by Lemma~\ref{4.3}. 
The assertion follows from Lemma~\ref{p^eproperties}(6).

(2)Assume $f_*\mathcal O_S\neq \mathcal O_T.$ 
We can apply Proposition~\ref{2.1} and 
we obtain Proposition~\ref{2.1}(2). 
Then, we have projective morphisms 
$$f:X\overset{g}\to V\to R$$
where $V$ is a smooth projective curve. 

$\underline{\rm Case}$~(2.1s). 
Assume Proposition~\ref{2.1}(2.1s) holds. 
By Lemma~\ref{p^eproperties}(7), 
it is sufficient to prove (a)$'$ and (b)$'$. 
\begin{enumerate}
\item[(a)$'$]{The following map is surjective: 
$$SG(X, m(K_X+\Delta))\to BI(S, m(K_X+\Delta)|_S).$$}
\item[(b)$'$]{Assume that 
$BI(S, m(K_X+\Delta)|_S)$ generates 
$\mathcal O_S(m(K_X+\Delta)|_S)$. 
Then 
$SG(X, m(K_X+\Delta))$ 
generates 
$\mathcal O_X(m(K_X+\Delta)).$}
\end{enumerate}
First we prove (a)$'$. 
Note that 
there is a Galois involution $\iota:S_1\to S_1$ and 
$\iota$ is $B$-birational. 
Let $s\in BI(S, m(K_X+\Delta)|_S)$. 
Since $s$ is $B$-invariant, 
this section $s$ is invariant for $\iota$. 
Thus $s|_{S_1}$ is the pull-back of 
a section $t\in H^0(V, m(D_V)).$ 
Let $u:=g^*t\in H^0(X, m(K_X+\Delta))$. 
We prove that $u|_S=s.$ 
Let $S=\bigcup S_i$ be the irreducible decomposition. 
Since $S$ is reduced, 
we obtain the exact sequence: 
$$0\to \mathcal O_{S}\to 
\bigoplus_{i} \mathcal O_{S_i}.$$
Therefore it is sufficient to prove that 
$u|_{S_i}=s|_{S_i}$ for every $i$. 
For $i=1$, this is clear by the construction. 
Thus we may assume that $S_i$ is $g$-vertical. 
We take a proper birational morphism 
$\lambda:X''\to X$ in Lemma~\ref{dltonepoint}. 
Let $g'':X''\overset{\lambda}\to X\overset{g}\to V.$
Note that $\lambda_*\mathcal O_{S''}=\mathcal O_S$ 
by Lemma~\ref{dltonepoint} 
where $S'':=\llcorner\Delta''\lrcorner$. 
Thus it is sufficient to prove that 
$u''|_{S_{i}''}=s''|_{S_{i}''}$ where 
$u'':=\lambda^*u$, $s'':=\lambda^*s$ and 
$S_{i}''$ is an irreducible component 
of $S''$ such that $g''$-vertical. 
Let $S_1''$ be the proper transform of $S_1$. 
Assume $S_{1}''\cap S_{i}''\neq \emptyset$. 
Note that, since $(X'', \Delta'')$ is dlt, 
the scheme-theoretic intersection 
$S_{1}''\cap S_{i}''$ is reduced. 
Hence, Lemma~\ref{dltonepoint} 
implies $g_{*}''\mathcal O_{S_{i}''}\simeq 
g_{*}''\mathcal O_{S_{1}''\cap S_{i}''}$. 
Since $m(K_{X''}+\Delta'')$ is the pull-back of 
$mD_V$, this means 
$$H^0(S_{i}'', m(K_{X''}+\Delta'')|_{S_{i}''})\simeq 
H^0(S_{1}''\cap S_{i}'', m(K_{X''}+\Delta'')|_{S_{1}''\cap S_{i}''}).$$
By $u''|_{S_{1}''}=s''|_{S_{1}''}$, we have 
$u''|_{S_{1}''\cap S_{i}''}=s''|_{S_{1}''\cap S_{i}''}.$ 
Therefore, by the above isomorphism, 
we see $u''|_{S_{i}''}=s''|_{S_{i}''}.$ 
If $S_{j}''$ satisfies 
$S_{j}''\cap S_{i}''\neq\emptyset$ 
for $S_{1}''\cap S_{i}''\neq\emptyset$, 
then $u''|_{S_{j}''}=s''|_{S_{j}''}$ 
by the same argument as above. 
By the inductive argument, 
if a vertical irreducible component $S_{j}''$ 
is contained in a connected component 
of $S''$ 
which intersects $S_{1}''$, 
then $u''|_{S_{j}''}=s''|_{S_{j}''}.$ 
By Lemma~\ref{2.4} and Proposition~\ref{2.1}, 
every vertical irreducible component 
$S_{i}''$ satisfies this property. 
Therefore, we see 
$u\in SG(X, m(K_X+\Delta))$ such that 
$u|_S=s$. 

Second, we prove (b)$'$. 
We prove that 
$SG(X, m(K_X+\Delta))$ generates 
$\mathcal O_X(m(K_X+\Delta))$. 
Let $s_1,\cdots, s_r\in BI(S, m(K_X+\Delta)|_S)$ 
be a basis and 
let $u_1,\cdots, u_r\in SG(X, m(K_X+\Delta))$ 
be their lifts. 
Let $t_1,\cdots,t_r\in H^0(V, mD_V)$ 
be the corresponding sections. 
Since $BI(S, m(K_X+\Delta)|_S)$ generates 
$\mathcal O_S(m(K_X+\Delta)|_S)$ and 
$S\to V$ is surjective, 
$t_1,\cdots ,t_r$ have no common zeros. 
Thus the corresponding sections 
$u_1,\cdots, u_r$ generates $\mathcal O_X(m(K_X+\Delta))$. 

$\underline{\rm Case}$~(2.2). 
Assume Proposition~\ref{2.1}(2.2) holds. 
It is sufficient to prove the above assertions (a)$'$ and (b)$'$.  

We prove (a)$'$. 
Note that 
there is a $B$-birational morphism 
$\iota:S_2\to S_1$ obtained by 
$S_2\simeq V\simeq S_1$. 
Let $s\in BI(S, m(K_X+\Delta)|_S).$ 
Since $s$ is $B$-invariant, we see 
$\iota^*(s|_{S_1})=s|_{S_2}$. 
Since $S_1\simeq V$, 
$s|_{S_1}$ is the pull-back of 
a section $t\in H^0(V, mD_V).$ 
Let $u:=g^*t\in H^0(X, m(K_X+\Delta))$. 
We would like to prove that $u|_S=s.$ 
It is sufficient to prove that 
$u|_{S_i}=s|_{S_i}$ for every irreducible component 
$S_i$ of $S$. 
By the same argument as (2.1s), 
it is sufficient to prove this equality 
only for $i=1,2$. 
It is clear in the case where $i=1$. 
Since $\iota^*(u|_{S_1})=u|_{S_2}$, 
it is also clear in the case where $i=2.$ 
The assertion (b) holds by the same argument 
as (2.1s). 

$\underline{\rm Case}$~(2.1i). 
Assume Proposition~\ref{2.1}(2.1i) holds. 
We see $p={\rm char}\,k=2$. 

We prove (a). 
Let $s\in BI^{(2)}(S, 2m(K_X+\Delta)|_S)$. 
Then we have 
$s=\tilde s^{2}$ where 
$\tilde s\in BI(S, m(K_X+\Delta)|_S)$. 
Note that $g|_{S_1}:S_1\to V$ is 
the relative Frobenius morphism. 
Thus the absolute Frobenius morphism $F:S_1\to S_1$ 
factors through $V$: 
$$F:S_1\overset{g|_{S_1}}\to V\overset{G}\to S_1.$$ 
Note that $G$ is a non-$k$-linear isomorphism as schemes and that, 
for an invertible sheaf $L$ on $V$,  
$$G^*(g|_{S_1})^*L\simeq G^*(g|_{S_1})^*G^*(G^{-1})^*L\simeq G^*F^*(G^{-1})^*L\simeq L^{\otimes 2}.$$
We show $\mathcal O_V(2mD_V)\simeq 
G^*\mathcal O_{S_1}(m(K_X+\Delta)|_{S_1})$. 
Since $m\in 2\mathbb Z$, we can write $m=2m'$ where $m'\in\mathbb Z$. 
First, we see  
{\Small $$(g|_{S_1})^*\mathcal O_V(2m'D_V)\simeq
\mathcal O_{S_1}(2m'(K_X+\Delta)|_{S_1})\simeq
(g|_{S_1})^*G^*\mathcal O_{S_1}(m'(K_X+\Delta)|_{S_1}).$$ }
Then, for an invertible sheaf 
$$M:=(G^{-1})^*\mathcal O_V(2m'D_V)\otimes 
\mathcal O_{S_1}(-m'(K_X+\Delta)|_{S_1}),$$ 
we obtain $F^*M=(g|_{S_1})^*G^*M \simeq \mathcal O_{S_1}$. 
This implies 
\begin{eqnarray*}
\mathcal O_V(2mD_V)&\simeq &G^*(g|_{S_1})^*\mathcal O_V(2m'D_V)\\
&\simeq&G^*F^*(G^{-1})^*\mathcal O_V(2m'D_V)\\
&\simeq&G^*F^*\mathcal O_{S_1}(m'(K_X+\Delta)|_{S_1})\\
&\simeq&G^*\mathcal O_{S_1}(m(K_X+\Delta)|_{S_1}).
\end{eqnarray*}

Therefore, the section $s$ 
is the pull-back of 
$$t:=G^*\tilde s\in H^0(V, 2mD_V).$$ 
Let $u:=g^*t\in H^0(X, 2m(K_X+\Delta)).$ 
Then, by the same argument as (2.2s), 
we see $u|_S=s.$ 
This means $u\in G(X, 2m(K_X+\Delta)).$ 

We prove (b), that is, we prove that 
$G(X, 2m(K_X+\Delta))$ generates 
$\mathcal O_X(2m(K_X+\Delta))$. 
Let $s_1,\cdots, s_r\in 
BI^{(2)}(S, 2m(K_X+\Delta)|_S)$ 
be a basis and 
let $u_1,\cdots, u_r\in G(X, 2m(K_X+\Delta))$ 
be their lifts. 
Let $t_1,\cdots,t_r\in H^0(V, 2mD_V)$ 
be the corresponding sections. 
Here, $BI^{(2)}(S, 2m(K_X+\Delta)|_S)$ generates 
$\mathcal O_S(m(K_X+\Delta)|_S)$ 
by Lemma~\ref{p^eproperties}(3). 
Thus, since $S\to V$ is surjective, 
$t_1,\cdots ,t_r$ have no common zeros. 
Thus the corresponding sections 
$u_1,\cdots, u_r$ generates $\mathcal O_X(2m(K_X+\Delta))$. 
\end{proof}

In order to construct $B$-invariant sections, 
we consider the following finiteness theorem. 

%
\begin{thm}\label{finitegroup}
Let $(C, \Delta)$ be a projective lc curve and let $m$ be a positive integer 
such that $m(K_C+\Delta)$ is Cartier. 
Then $\rho_m(\Aut (C, \Delta))$ is a finite group where 
$\rho_m$ is a group homomorphism defined by 
\begin{eqnarray*}
\rho_m: \Aut (C, \Delta)&\to& \Aut (H^0(C, m(K_C+\Delta)))\\
\sigma &\mapsto&(s\mapsto \sigma^*s).
\end{eqnarray*} 
\end{thm}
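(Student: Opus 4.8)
The plan is to realize $G:=\Aut(C,\Delta)$ as an algebraic group of finite type and to reduce the finiteness of $\rho_m(G)$ to the statement that the identity component acts trivially. Since $\sigma$ is an automorphism we may take $\sigma^*K_C=K_C$, so the defining condition $\sigma^*(K_C+\Delta)=K_C+\Delta$ is equivalent to $\sigma^*\Delta=\Delta$; hence $G$ is the common stabilizer, inside the finite-type group scheme $\Aut(C)$, of the finitely many reduced divisors obtained by collecting the components of $\Delta$ having a fixed coefficient. Being a stabilizer of finite subschemes, $G$ is a closed subgroup, so it has finitely many connected components and $G/G^0$ is finite. Therefore it suffices to prove $\rho_m(G^0)=\{1\}$: once $G^0\subset\ker\rho_m$, the map $\rho_m$ factors through $G/G^0$, and $|\rho_m(G)|\le[G:G^0]<\infty$.

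Next I would reduce to the irreducible case. As $(C,\Delta)$ is lc, $C$ is normal, hence a disjoint union $\coprod_i C_i$ of smooth projective curves, and $H^0(C,m(K_C+\Delta))=\bigoplus_i H^0(C_i,m(K_{C_i}+\Delta_i))$. The connected group $G^0$ cannot permute the finitely many connected components, so it fixes each $C_i$ and acts on each summand; it is thus enough to show that $G^0$ acts trivially on $V_i:=H^0(C_i,m(K_{C_i}+\Delta_i))$ for every $i$.

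Fix then an irreducible smooth projective curve $C$ with boundary $\Delta$ of coefficients $\le 1$, write $V:=H^0(C,m(K_C+\Delta))$, and let $G^0$ be the identity component of the stabilizer. If $g(C)\ge 2$, then $\Aut(C)$ is finite, so $G^0$ is trivial and there is nothing to prove. If $g(C)\le 1$ and $\dim G^0>0$, I would run through the short list of positive-dimensional connected subgroups: $\mathbb{G}_a,\mathbb{G}_m$, a Borel, or all of $PGL_2$ when $g=0$, and the full translation group $E$ when $g=1$. In each case the invariance $\sigma^*\Delta=\Delta$ for all $\sigma\in G^0$ forces $\Delta$ to be supported on the at most two fixed points of $G^0$, and a direct degree count gives $\deg(K_C+\Delta)\le 0$. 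If the inequality is strict then $V=0$. If $\deg(K_C+\Delta)=0$, then $K_C+\Delta\sim 0$ and $V$ is one-dimensional, spanned by a $G^0$-invariant pluri-(log-)differential: $(dx/x)^{\otimes m}$ in the case $g=0$ with $\Delta$ the two torus-fixed points, and $\omega^{\otimes m}$ for the translation-invariant differential $\omega$ in the case $g=1$, $\Delta=0$. In every situation $G^0$ fixes the spanning section, so $\rho_m(G^0)=\{1\}$.

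The main obstacle is positive-characteristic bookkeeping rather than any deep new input. I must ensure that working with the abstract group of $k$-automorphisms (rather than a possibly non-reduced automorphism group scheme) still yields a finite $G/G^0$, and that the ``$G^0$ acts trivially'' step is genuinely characteristic-free. The saving point is that the invariance of the relevant differential forms is an elementary computation valid in all characteristics, so the purely inseparable phenomena emphasized in the introduction (wild or $p$-torsion translations on supersingular elliptic curves, inseparable isogenies) never enter the final invariance argument. I would conclude by assembling the pieces: triviality of $\rho_m(G^0)$ on each component gives $\rho_m(G^0)=\{1\}$ on all of $V$, and the finiteness of $G/G^0$ then forces $\rho_m(G)$ to be finite.
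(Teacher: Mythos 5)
Your proposal is correct, and at bottom it runs on the same two computations as the paper's proof: the translation-invariance of the one-form on an elliptic curve and the invariance of $(dx/x)^{\otimes m}$ on $\mathbb P^1$ marked at two points; but the packaging is genuinely different. The paper argues case by case on the genus and in each case proves finiteness directly: for $g\geq 2$ it uses finiteness of $\Aut(C)$; for $g=1$, $\Delta\neq 0$ it invokes quasi-projectivity of $\Aut(C,\ulcorner\Delta\urcorner)$ together with the tangent-space vanishing $H^0(C, T_C\otimes\mathcal O_C(-\ulcorner\Delta\urcorner))=0$; for $g=1$, $\Delta=0$ it writes any automorphism as a translation composed with an element of the finite group $\Aut(C,[0])$; and for $C=\mathbb P^1$ it splits off the subcases $|\Supp\Delta|\geq 3$ (finite group) and $\deg(K_C+\Delta)<0$ (no sections), leaving only the two-point case, which it checks by hand. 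You instead organize everything around one structural principle: $G=\Aut(C,\Delta)$ is a closed subgroup of the finite-type automorphism group scheme, so $G/G^0$ is finite, and it suffices to show $\rho_m(G^0)=\{1\}$; then the connected group $G^0$ must fix each point of $\Supp\Delta$, a positive-dimensional subgroup of $PGL_2$ fixes at most two points (the pointwise stabilizer of three points being trivial) and the translation group of an elliptic curve fixes none, so a degree count plus the explicit invariant section finishes. Your route buys uniformity — in particular the paper's deformation-theoretic input for $g=1$, $\Delta\neq 0$ collapses to the elementary remark that translations preserve no nonzero $\Delta$ — at the cost of invoking representability of the automorphism functor, the finiteness of component groups of algebraic groups, and the fixed-point analysis of connected subgroups of $PGL_2$; the paper's per-case argument needs none of that formalism. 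Both proofs are complete, and the invariance computations at the heart of the two arguments coincide.
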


\begin{proof}
We may assume that $C$ is irreducible. 
If the genus $g(C)\geq 2$, then 
$\Aut (C)$ is a finite group. 
Therefore, $\rho_m (\Aut (C, \Delta))$ is a finite group since $\Aut (C, \Delta)\subset \Aut (C)$. 

If $g(C)=1$ and $\Delta\ne 0$, then 
$\Aut (C, \ulcorner \Delta\urcorner)$ is a quasi-projective 
scheme and $H^0(C, T_C\otimes \mathcal O_C(-\ulcorner \Delta\urcorner))=0$. Therefore, 
$\Aut (C, \ulcorner \Delta\urcorner)$ is a finite group. 
Thus, $\rho _m (\Aut (C, \Delta))$ is a finite group because 
$\Aut (C, \Delta)\subset \Aut (C, \ulcorner \Delta\urcorner)$. 

Assume that $g(C)=1$ and $\Delta=0$. Let $0\in C$ be the origin of the elliptic curve 
$C$. 
Then $T_{-\sigma(0)}\circ \sigma\in \Aut (C, [0])$ for 
any $\sigma \in \Aut (C)$, where 
$T_{-\sigma(0)}$ is the translation of $C$ by $-\sigma(0)$. 
Note that $H^0(C, \mathcal O_C(K_C))\simeq k$ is spanned by 
a translation invariant $1$-form on $C$ and 
that $\Aut (C, [0])$ is a finite group. 
Therefore, $\rho _1(\Aut (C))$ is a finite group. 
Since $\rho_m=\rho_1^{\otimes m}$, $\rho _m (\Aut (C))$ is finite for every $m>0$. 

Finally, we assume that $C=\mathbb P^1$. 
If $|\Supp \Delta|\geq 3$, 
then $\Aut (C, \Delta)$ is a finite group. 
If $\deg (K_C+\Delta)<0$, then there is 
nothing to prove. 
Therefore, we can reduce the problem to the case when $\Delta=\llcorner 
\Delta\lrcorner =\{\text{two points}\}$. 
In this case, we can easily check that 
$\rho _m (\Aut (C, \Delta))$ is finite for every $m>0$. Moreover, 
$\rho_m (\Aut (C, \Delta))$ is trivial if $m$ is an even positive integer. 
\end{proof}

The following proposition 
shows that the assumption of (b) 
in Proposition~\ref{4.5} holds. 

\begin{prop}\label{4.7}
Let $(X, \Delta)$ be a projective lc curve. 
If $K_X+\Delta$ is nef, 
then $BI(X, m'(K_X+\Delta))$ generates 
$\mathcal O_X(m'(K_X+\Delta))$ for some integer $m'>0$. 
\end{prop}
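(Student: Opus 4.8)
The plan is to reduce to the irreducible case and then manufacture admissible generating sections by a \emph{norm} (product over the group) construction, which succeeds in every characteristic precisely because it sidesteps the averaging one would use in characteristic zero.

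\textbf{Reductions.} First I would reduce to the case where $X$ is irreducible, hence a smooth projective curve; the general case is recovered by running the same construction over the $B$-birational self-maps that also permute the components. For irreducible $X$ any birational self-map is an automorphism, and the defining identity of a $B$-birational map forces it to lie in $\Aut(X,\Delta)$; thus $A(X, m(K_X+\Delta))$ is exactly the $\Aut(X,\Delta)$-invariant part of $PA(X, m(K_X+\Delta))$. By Theorem~\ref{finitegroup} the image $G:=\rho_m(\Aut(X,\Delta))$ is a finite group, and the action of $\Aut(X,\Delta)$ on $H^0(X, m(K_X+\Delta))$ factors through $G$. Since $K_X+\Delta$ is nef on a curve it is semi-ample, so for $m$ sufficiently large and even the linear system $PA(X, m(K_X+\Delta))$ is base point free: this is Proposition~\ref{4.5curve}(b) when $S:=\llcorner\Delta\lrcorner\neq 0$, and it is immediate when $S=0$, since then every section is preadmissible.

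\textbf{The norm construction.} Fix a point $x\in X$. Using base point freeness I would choose a preadmissible section $s$ whose zero locus avoids the finite set $\{g\cdot x\}$, where $g$ runs through representatives in $\Aut(X,\Delta)$ of the elements of $G$ (when $\deg(K_X+\Delta)=0$ one reduces to $\mathcal O_X$ and simply takes $s$ nowhere vanishing). Then I form the norm
$$u_x:=\prod_{\bar g\in G}\bar g\cdot s\ \in\ H^0\big(X,\, |G|\,m(K_X+\Delta)\big).$$
Reindexing the product shows $u_x$ is $\Aut(X,\Delta)$-invariant, and $u_x(x)\neq 0$ by the choice of $s$. The crucial point is that $u_x$ is again preadmissible: by Lemma~\ref{4.9} every $\bar g\cdot s$ has the same restriction to $S$ as $s$, whence $u_x|_S=(s|_S)^{|G|}$; since $S$ is zero-dimensional, admissibility on $S$ is preserved under products, so $u_x|_S$ is admissible. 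Therefore $u_x\in A(X, |G|\,m(K_X+\Delta))$.

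\textbf{Conclusion and the main obstacle.} The admissible sections $\{u_x\}_{x\in X}$ have no common zero, so by quasi-compactness finitely many of them generate $\mathcal O_X(m'(K_X+\Delta))$ with $m'=|G|\,m$, which is the assertion. The only genuine difficulty is the positive-characteristic one: the characteristic-zero argument would average a preadmissible section over $G$ to make it admissible, but when $p\mid |G|$ neither $\tfrac1{|G|}\sum_{\bar g}\bar g\cdot s$ nor even the trace $\sum_{\bar g}\bar g\cdot s$ is usable (the latter can vanish at a point where every summand is nonzero). Replacing the sum by the product removes this obstruction entirely, the sole cost being that the weight is multiplied by $|G|$; the one thing that must be verified is that the product stays preadmissible, and this is exactly what Lemma~\ref{4.9} together with the product-stability of admissibility on the boundary $S=\llcorner\Delta\lrcorner$ supplies.
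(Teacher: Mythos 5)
Your proof is correct and takes essentially the same route as the paper: both rest on Proposition~\ref{4.5curve}(b), Theorem~\ref{finitegroup} and Lemma~\ref{4.9}, and both turn preadmissible sections into admissible ones by a multiplicative, characteristic-free symmetrization over the finite group $G=\rho_m(\Aut(X,\Delta))$. The only difference is in the details of the symmetrization: the paper uses all elementary symmetric functions $(\sigma_i(g_1^*s,\cdots,g_N^*s))^{N!/i}$, so that any $s$ nonvanishing at a given point already yields an admissible section nonvanishing there, whereas your norm $\prod_{\bar g\in G}\bar g\cdot s$ needs the additional (and valid, since $PA(X,m(K_X+\Delta))$ is a vector space over the infinite field $k$, so it is not a finite union of proper subspaces) step of choosing $s$ nonvanishing on the whole finite orbit of the point.
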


\begin{proof}
We see that $H^0(X, m(K_X+\Delta))$ 
generates $\mathcal O_X(m(K_X+\Delta))$ for some integer $m>0$. 
Let $G:=\rho_{m}(\Aut(X, \Delta)).$ 
Note that this group is finite 
by Theorem~\ref{finitegroup}. 
Let $N:=|G|$ and 
let $G=\{g_1,\cdots, g_N\}$. 
For $1\leq i\leq N$, 
let $\sigma_i$ be the $N$-variable elementary symmetric 
polynomial of degree $i$. 
If $s\in H^0(X, m(K_X+\Delta))$, then 
$$(\sigma_i(g_1^*s,\cdots, g_N^*s))^{N!/i}
\in BI(X, N!m(K_X+\Delta)).$$
Since 
$$\bigcap_{j=1}^N\{g_j^*s=0\}=
\bigcap_{i=1}^{N}\{\sigma_i(g_1^*s,\cdots,g_N^*s)=0\},$$
$BI(X, N!m(K_X+\Delta))$ generates 
$\mathcal O_X(N!m(K_X+\Delta))$. 
\end{proof}

Let us prove the main theorem of this paper. 

\begin{thm}\label{maintheorem}
Let $(X, \Delta)$ be a projective slc surface. 
If $K_X+\Delta$ is nef, then 
$K_X+\Delta$ is semi-ample. 
\end{thm}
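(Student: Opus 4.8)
The plan is to reduce the abundance statement for the slc surface $(X,\Delta)$ to the preadmissible-section machinery developed for the dlt case. First I would take the normalization $\nu:Y\to X$ and write $K_Y+\Delta_Y=\nu^*(K_X+\Delta)$, so that $(Y,\Delta_Y)$ is lc, and then pass to a dlt modification $\mu:(Z,\Delta_Z)\to(Y,\Delta_Y)$ with $K_Z+\Delta_Z=\mu^*(K_Y+\Delta_Y)$, which exists by the minimal model theory for surfaces in positive characteristic. Since $K_X+\Delta$ is nef, its pullback $K_Z+\Delta_Z$ is nef, and by the abundance theorem for dlt (equivalently lc) surfaces (\cite{Fujita}) it is semi-ample on $Z$. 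The goal is to descend enough sections to $X$ to conclude that $K_X+\Delta$ is semi-ample; by Lemma~\ref{padescendslc}, it suffices to produce preadmissible sections on $Z$ that generate $\mathcal O_Z(m(K_Z+\Delta_Z))$, since every such section descends to a global section of $m(K_X+\Delta)$ on $X$.

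The core of the argument is to show that $PA^{(p)}(Z,pm(K_Z+\Delta_Z))$ generates $\mathcal O_Z(pm(K_Z+\Delta_Z))$. I would invoke Proposition~\ref{4.5}(b), whose hypothesis is that $A(S,m(K_Z+\Delta_Z)|_S)$ generates $\mathcal O_S(m(K_Z+\Delta_Z)|_S)$ on $S:=\llcorner\Delta_Z\lrcorner$. Here $S$ is a proper lc curve (its components are the boundary curves of the dlt surface), and the restriction $(K_Z+\Delta_Z)|_S$ is nef; by adjunction this restriction is of the form $K_S+\Delta_S$ for an appropriate lc boundary, so Proposition~\ref{4.7} applies and guarantees that admissible sections on $S$ generate the relevant sheaf (after passing to a suitable multiple $m$). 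Feeding this into Proposition~\ref{4.5}(b) yields generation of $\mathcal O_Z(pm(K_Z+\Delta_Z))$ by $PA^{(p)}$-sections, and by Lemma~\ref{p^eproperties}(3) these are in particular genuine preadmissible sections $PA(Z,pm(K_Z+\Delta_Z))$.

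Having generation by preadmissible sections on $Z$, I would descend via Lemma~\ref{padescendslc}: each such section is $\mu^*\nu^*t$ for some $t\in H^0(X,pm(K_X+\Delta))$, and since the collection has no common zeros on $Z$ and $\mu\circ(\text{normalization lift})$ is surjective onto $X$, the corresponding $t$'s have no common zeros on $X$. This shows $pm(K_X+\Delta)$ is base-point free, i.e. $K_X+\Delta$ is semi-ample, completing the proof. One technical point requiring care is the bookkeeping of the multiple $m$: Proposition~\ref{4.5} and Proposition~\ref{4.7} each require $m$ sufficiently large and divisible (and with $m\in 2\mathbb Z$ to accommodate the characteristic-two case), so I would fix a common such $m$ at the outset and track the passage to $pm$ consistently.

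The main obstacle will be the characteristic-two case (2.1i) of Proposition~\ref{2.1}, where the horizontal part of $S$ maps to the base $V$ by a purely inseparable degree-two extension. This is precisely why the $p$-power variants $A^{(p)}$ and $PA^{(p)}$ are introduced, and why Proposition~\ref{4.5} is phrased in terms of $PA^{(p)}$ rather than $PA$. The subtlety is that an individual admissible section on $S$ need not descend to $V$, but its $p$-th power does, because the absolute Frobenius on the horizontal component $S_1$ factors through $V$; the burden is to confirm that working with $PA^{(p)}$ throughout still yields honest sections on $X$ and that the no-common-zeros property survives the Frobenius twist. All of this is already packaged into the propositions I am invoking, so the remaining work is to verify that the adjunction on $S$ produces an lc boundary to which Proposition~\ref{4.7} genuinely applies and that the multiples align.
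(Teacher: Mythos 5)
Your proposal is correct and takes essentially the same route as the paper's own proof: normalization, dlt modification, reduction via Lemma~\ref{padescendslc} to generation of $\mathcal O_Z(m(K_Z+\Delta_Z))$ by preadmissible sections, verification of the hypothesis of Proposition~\ref{4.5}(b) via Proposition~\ref{4.7} on the boundary curve, and conversion of $PA^{(p)}$-sections into honest $PA$-sections by Lemma~\ref{p^eproperties}. The only differences are cosmetic: you spell out the descent/base-point-freeness bookkeeping and the characteristic-two motivation that the paper leaves implicit, and (exactly as the paper does) you call $S=\llcorner\Delta_Z\lrcorner$ an lc curve when it is really the sdlt curve of Corollary~\ref{dltsdlt}.
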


\begin{proof}
Let $\nu:Y\to X$ be the normalization and 
we define $\Delta_Y$ by $K_Y+\Delta_Y=\nu^*(K_X+\Delta)$. 
There exists a birational morphism 
$\mu:Z\to Y$ 
from a projective dlt surface $(Z, \Delta_Z)$ 
where $K_Z+\Delta_Z=\mu^*(K_Y+\Delta_Y).$ 
By Lemma~\ref{padescendslc}, 
it is sufficient to prove that 
$G(Z, m_0(K_Z+\Delta_Z))$ generates 
$\mathcal O_Z(m_0(K_Z+\Delta_Z))$ for some $m_0>0$. 
This follows from Proposition~\ref{4.5}(b) and Proposition~\ref{4.7}.
\end{proof}

\section{Appendix: Fundamental properties of dlt surfaces}

We summarize fundamental properties for dlt surfaces. 
In this section, we assume that 
all surfaces are irreducible. 
The results in this section may be well-known for experts. 

First, we recall the definition of dlt surfaces. 
It is easy to see that the following definition is equivalent to 
\cite[Definition~2.8]{Kollar} and \cite[Definition~2.37]{KM}. 

\begin{dfn}\label{defdlt}
Let $X$ be a normal surface and 
let $\Delta$ be a \Q-divisor such that 
$K_X+\Delta$ is \Q-Cartier and 
$0\leq \Delta\leq 1$. 
Let 
{\small 
$$S(X, \Delta):=$$
$${\rm Sing}(X)
\cup
\{x\in {\rm Reg}(X)\,|\,
\Supp\,\Delta\,\,{\rm is\,\,not
\,\,simple\,\,normal\,\,crossing
\,\,at}\,\,x\}.$$}
We say $(X, \Delta)$ is {\em dlt} if 
$a(E, X, \Delta)>-1$ 
for every proper birational morphism 
$f:Y\to X$ and 
every $f$-exceptional 
prime divisor $E\subset Y$ such that 
$f(E)\in S(X, \Delta)$.
\end{dfn}

\begin{prop}\label{Szabo}
Let $X$ be a normal surface and 
let $\Delta$ be a \Q-divisor such that 
$K_X+\Delta$ is \Q-Cartier and 
$0\leq \Delta\leq 1.$ 
The following assertions are equivalent: 
\begin{enumerate}
\item{$(X, \Delta)$ is dlt.}
\item{There exists a projective birational 
morphism $\mu:X'\to X$ 
from a smooth surface 
such that 
$\Ex(\mu)\cup \Supp\,\mu^{*}(\Delta)$
is a simple normal crossing divisor and 
each $\mu$-exceptional prime divisor $E_i$
satisfies $a(E_i, X, \Delta)>-1$.}
\end{enumerate}
\end{prop}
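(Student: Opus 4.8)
The plan is to prove both implications by passing to a log resolution and comparing discrepancies, using crucially that $S(X,\Delta)$ is a finite set of points: since $X$ is a normal surface, $\Sing(X)$ is a finite set, and the locus on $\mathrm{Reg}(X)$ where $\Supp\Delta$ fails to be simple normal crossing is also finite.

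For $(1)\Rightarrow(2)$: over $X\setminus S(X,\Delta)$ the surface $X$ is smooth and $\Supp\Delta$ is already simple normal crossing, so nothing needs to be modified there. I would therefore invoke resolution of singularities together with embedded resolution for surfaces (valid in all characteristics) to produce a projective birational $\mu\colon X'\to X$ from a smooth surface with $\Ex(\mu)\cup\Supp\mu^*(\Delta)$ simple normal crossing and with $\mu$ an isomorphism over $X\setminus S(X,\Delta)$, so that every $\mu$-exceptional prime divisor $E_i$ satisfies $\mu(E_i)\in S(X,\Delta)$. The discrepancy bound $a(E_i,X,\Delta)>-1$ is then immediate from Definition~\ref{defdlt} applied to $f=\mu$.

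For $(2)\Rightarrow(1)$: given $\mu$ as in (2), write $K_{X'}+\Gamma=\mu^*(K_X+\Delta)$ with $\Gamma=\mu_*^{-1}\Delta+\sum_i(-a(E_i,X,\Delta))E_i$. By construction $\Supp\Gamma\subseteq\Ex(\mu)\cup\Supp\mu^*(\Delta)$ is simple normal crossing, every coefficient is $\leq 1$ (the coefficients of the $E_i$ equal $-a(E_i,X,\Delta)<1$ by hypothesis, those of $\mu_*^{-1}\Delta$ are $\leq 1$ since $\Delta\leq 1$), and the \emph{only} coefficient-one components are the strict transforms $\widetilde{\Delta}_j$ of the components of $\llcorner\Delta\lrcorner$. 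Because $\mu$ is crepant for these two pairs, $a(E,X,\Delta)=a(E,X',\Gamma)$ for every divisor $E$ over $X$, so it suffices to bound discrepancies of $(X',\Gamma)$. As $(X',\Gamma)$ is log smooth with coefficients $\leq 1$, it is sub-log canonical, whence $a(E,X',\Gamma)\geq -1$ for all $E$; moreover the only divisorial valuations attaining $-1$ are the log canonical places, whose centers on $X'$ are strata of the coefficient-one part $\sum\widetilde{\Delta}_j$, namely the $\widetilde{\Delta}_j$ themselves (which are non-exceptional over $X$) or their pairwise intersection points.

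Thus I would take any $f$-exceptional divisor $E$ over $X$ with $f(E)=:x\in S(X,\Delta)$ and argue by contradiction. Since the pair is sub-log canonical, $a(E,X,\Delta)\geq -1$; if equality held, then $E$ would be an exceptional log canonical place of $(X',\Gamma)$, so its center on $X'$ would be a point $p$ lying on two coefficient-one components $\widetilde{\Delta}_{j_1},\widetilde{\Delta}_{j_2}$. The key step, which I expect to be the main obstacle, is to deduce $x=\mu(p)\notin S(X,\Delta)$. Here I use that $\Ex(\mu)\cup\Supp\mu^*(\Delta)$ is simple normal crossing on the \emph{surface} $X'$: at most two of its components pass through $p$, and $\widetilde{\Delta}_{j_1},\widetilde{\Delta}_{j_2}$ already account for both, so no $\mu$-exceptional $E_i$ passes through $p$, i.e.\ $p\notin\Ex(\mu)$. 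Hence $\mu$ is an isomorphism near $p$, the point $x$ is a regular point of $X$, and near $x$ the divisor $\Delta$ consists only of the two transversally meeting components $\Delta_{j_1},\Delta_{j_2}$; therefore $\Supp\Delta$ is simple normal crossing at $x$ and $x\notin S(X,\Delta)$, a contradiction. Consequently $a(E,X,\Delta)>-1$ for every $f$-exceptional $E$ with $f(E)\in S(X,\Delta)$, which is exactly the assertion that $(X,\Delta)$ is dlt.
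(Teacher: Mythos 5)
Your proposal is correct and takes essentially the same route as the paper: both directions reduce to producing a log resolution that is an isomorphism over $X\setminus S(X,\Delta)$ (so that every exceptional divisor lies over $S(X,\Delta)$, where Definition~\ref{defdlt} applies) and then invoking discrepancy calculus for simple normal crossing pairs with coefficients $\leq 1$. The only differences are organizational: for $(1)\Rightarrow(2)$ the paper builds the good resolution by taking an arbitrary log resolution and contracting the exceptional components lying over points outside $S(X,\Delta)$, whereas you cite the strong form of embedded resolution directly; and for $(2)\Rightarrow(1)$ the paper argues directly (the center on $X'$ lies on an exceptional curve of coefficient $<1$, forcing discrepancy $>-1$ under blow-ups), whereas you argue contrapositively via the classification of log canonical places of log smooth sub-pairs --- the same underlying computation.
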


\begin{proof}
Note that $S(X, \Delta)$ is a finite set. 

Assume $(1)$, that is, assume that 
$(X, \Delta)$ is dlt. 
Let $f:Y\to X$ be a log resolution of $(X, \Delta)$. 
Let 
$$\Ex(f):=E_1\amalg\cdots\amalg E_r
\amalg F_1\amalg\cdots\amalg F_s$$ 
be the decomposition into the connected components 
where $P_i:=f(E_i)\in S(X,\Delta)$ and 
$Q_j:=f(F_j)\not\in S(X,\Delta)$. 
There exists a proper birational morphisms 
$$Y\overset{g}\to Z\overset{h}\to X$$ 
such that $Z$ is a normal surface and 
$\Ex(g)=F_1\sqcup\cdots\sqcup F_s.$
Indeed, 
$Z$ is obtained by glueing the varieties 
$X\setminus \{P_1,\cdots,P_r\}$ and 
$Y\setminus (F_1\sqcup\cdots\sqcup F_s)$. 
Note that this morphism $h:Z\to X$ is projective 
because $Z$ is smooth. 
Thus this morphism satisfies $(2)$. 

Assume $(2)$. 
Let $f:Y\to X$ be a proper birational morphism 
and let $E\subset Y$ be a prime divisor such that 
$f(E)\in S(X, \Delta)$. 
We prove $a(E, X, \Delta)>-1.$ 
We may assume that 
there exists a proper birational morphism 
$Y\overset{f'}\to X'$ and $Y$ is smooth 
by replacing $Y$ with a desingularization of 
a resolution of indeterminacy 
$Y\dashrightarrow X'$. 
There are two cases: $\dim f'(E)=0$ and 
$\dim f'(E)=1.$ 
The latter case is clear by $(2)$. 
Thus we may assume $f'(E)$ is one point. 
Let $K_{X'}+\Delta':=\mu^*(K_X+\Delta)$. 
Since $f(E)\in S(X, \Delta)$, 
there exists an $\mu$-exceptional curve $E_i$ 
such that $f'(E)\in E_i.$ 
We can write the prime decomposition 
$$\Delta':=b_iE_i+\cdots$$
where $b_i<1.$ 
Then we see that $a(E, X, \Delta)>-1$ 
since $\Delta'$ is simple normal crossing and 
since the morphism $f':Y\to X'$ is a sequence 
of blow-ups. 
\end{proof}

\begin{prop}\label{qfacdlt}
Let $(X, \Delta)$ be a dlt surface. 
Then $X$ is \Q-factorial. 
\end{prop}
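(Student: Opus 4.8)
The plan is to prove $\mathbb{Q}$-factoriality locally around each singular point and to reduce the descent of a Weil divisor to linear algebra on a resolution. Since $\mathbb{Q}$-factoriality is local and $X$ is smooth away from the finite set $\Sing(X)$, I would fix a point $x\in\Sing(X)$, pass to a small neighborhood, and take the minimal resolution $\mu:Y\to X$ with reduced exceptional fiber $E=\sum_i E_i$ over $x$. By Proposition~\ref{Szabo} such a good resolution exists, and the intersection matrix $(E_i\cdot E_j)_{i,j}$ is negative definite, hence invertible over $\mathbb{Q}$. Given any prime Weil divisor $D$ on the neighborhood, with strict transform $\tilde D$, the non-degeneracy lets me solve uniquely for rational numbers $a_i$ so that the $\mathbb{Q}$-divisor $D^{*}:=\tilde D+\sum_i a_i E_i$ satisfies $D^{*}\cdot E_j=0$ for every $j$; clearing denominators yields an integer $m>0$ and an integral divisor $mD^{*}$ on $Y$ with $(mD^{*})\cdot E_j=0$ for all $j$.

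The second ingredient is that a dlt surface singularity is rational. This follows from the discrepancy bound $a(E_i,X,\Delta)>-1$ built into Definition~\ref{defdlt} together with the relative vanishing \cite[Theorem~2.12]{Tanaka2}, exactly as in the argument already used in the proof of Proposition~\ref{1.3} to see that klt surfaces have rational singularities; in particular $R^1\mu_*\mathcal{O}_Y=0$. I would then use this vanishing to show that the line bundle $\mathcal{O}_Y(mD^{*})$, which has degree zero on every exceptional curve $E_j$, is trivial in a formal neighborhood of $E$: because $H^1$ of the structure sheaf vanishes on the infinitesimal thickenings $nE$, the restriction from $\Pic$ of a neighborhood of $E$ to the vector of degrees on the $E_j$ is injective, so vanishing degrees force $\mathcal{O}_Y(mD^{*})\cong\mu^{*}\mathcal{L}$ for some line bundle $\mathcal{L}$ near $x$. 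Pushing forward, $mD=\mu_*(mD^{*})$ is Cartier near $x$, and letting $x$ range over the finitely many singular points shows that every Weil divisor on $X$ is $\mathbb{Q}$-Cartier.

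The main obstacle is the passage from the numerical statement $(mD^{*})\cdot E_j=0$ to the genuine triviality $\mathcal{O}_Y(mD^{*})\cong\mu^{*}\mathcal{L}$: numerical triviality on the fiber does not by itself give triviality, and this is precisely where rationality of the singularity (hence the positive-characteristic vanishing of \cite{Tanaka2}) is essential, through an Artin-type comparison of $\Pic$ of the successive thickenings $nE$. A secondary point I would handle carefully is that $K_X$ is not yet known to be $\mathbb{Q}$-Cartier, so I must establish rationality intrinsically from $R^1\mu_*\mathcal{O}_Y=0$ rather than by first reducing $(X,\Delta)$ to a klt pair $(X,0)$; once $\mathbb{Q}$-factoriality is proved, the $\mathbb{Q}$-Cartierness of $K_X$ follows as a consequence.
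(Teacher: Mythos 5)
Your strategy is the classical one that the paper's own one-line proof delegates to \cite[Theorem~14.4]{Tanaka1}: localize at a singular point, build Mumford's numerical pullback $D^{*}$ using negative definiteness of the exceptional intersection matrix, and convert numerical triviality of $\mathcal{O}_Y(mD^{*})$ into actual triviality via the Artin--Lipman analysis of $\Pic$ on thickenings of the exceptional fiber; that analysis needs only $R^1\mu_*\mathcal{O}_Y=0$ and is characteristic-free, so those parts are sound. The genuine gap is in how you obtain $R^1\mu_*\mathcal{O}_Y=0$, i.e.\ rationality of the singularity, and it is not the ``secondary point'' you label it as --- it is the crux. You propose to argue ``exactly as in the proof of Proposition~\ref{1.3}'' via \cite[Theorem~2.12]{Tanaka2}, but that argument applies the vanishing theorem to a klt pair living on $X$: Proposition~\ref{1.3} hypothesizes that $(X,\{\Delta\})$ is klt --- in particular that $K_X+\{\Delta\}$ is $\mathbb{Q}$-Cartier --- and even assumes outright that $X$ is $\mathbb{Q}$-factorial, which is precisely what you are trying to prove. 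Before $\mathbb{Q}$-factoriality is known, the only $\mathbb{Q}$-Cartier divisor available on $X$ is $K_X+\Delta$ itself, and $\Delta$ may contain coefficient-one components through the singular point, so no klt pair on $X$ can be extracted. Your closing remark that you would ``establish rationality intrinsically from $R^1\mu_*\mathcal{O}_Y=0$'' does not repair this: that vanishing \emph{is} the definition of rationality, and it is exactly the statement left without proof.

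The repair must run the vanishing on the smooth surface $Y$, where every divisor is $\mathbb{Q}$-Cartier. Writing $K_Y+\Delta_Y=\mu^{*}(K_X+\Delta)$ for a log resolution $\mu$ and working over a small neighborhood of $x\in\Sing(X)$, the divisor $F:=\ulcorner -\Delta_Y\urcorner$ is effective and $\mu$-exceptional (this is where the dlt bound $a(E_i,X,\Delta)>-1$ of Definition~\ref{defdlt} enters), and $F-(K_Y+B)=-\mu^{*}(K_X+\Delta)$ is $\mu$-nef (and trivially $\mu$-big, $\mu$ being birational) for the klt boundary $B:=F+\Delta_Y$, whose coefficients lie in $[0,1)$. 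Then \cite[Theorem~2.12]{Tanaka2} gives $R^1\mu_*\mathcal{O}_Y(F)=0$. But even this is not yet $R^1\mu_*\mathcal{O}_Y=0$: pushing forward $0\to\mathcal{O}_Y\to\mathcal{O}_Y(F)\to\mathcal{O}_F(F)\to 0$ and using $\mu_*\mathcal{O}_Y(F)=\mathcal{O}_X$ identifies $R^1\mu_*\mathcal{O}_Y$ with $\mu_*\mathcal{O}_F(F)$, and killing that term requires a further cycle-by-cycle argument (e.g.\ Artin's criterion $p_a(Z)\leq 0$ for effective exceptional cycles). This bookkeeping is the actual content of \cite[Theorem~14.4]{Tanaka1} and of \cite{KoKo}; the non-circular ways to close your gap are to carry it out, or to quote rationality of dlt surface singularities from \cite{KoKo}, which is characteristic-free, rather than from an argument that presupposes the $\mathbb{Q}$-factoriality being proved.
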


\begin{proof}
See, for example, \cite[Theorem~5.3]{Tanaka1}.
\end{proof}

\begin{prop}
Let $(X, \Delta)$ be a dlt surface. 
If a \Q-divisor $\Delta'$ satisfies 
$0\leq \Delta'\leq\Delta$, then 
$(X, \Delta')$ is dlt. 
\end{prop}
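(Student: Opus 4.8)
The plan is to verify the Szab\'o-type criterion of Proposition~\ref{Szabo} for $(X,\Delta')$, reusing the very resolution that witnesses the dlt property of $(X,\Delta)$.

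First I would verify the standing hypotheses of Definition~\ref{defdlt}. Since $(X,\Delta)$ is dlt, Proposition~\ref{qfacdlt} shows $X$ is \Q-factorial, so every \Q-divisor on $X$ is \Q-Cartier; in particular $K_X+\Delta'$ is \Q-Cartier. Moreover $0\leq \Delta'\leq \Delta\leq 1$ gives $0\leq \Delta'\leq 1$. Hence $(X,\Delta')$ is a pair to which the definition applies.

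Next, apply Proposition~\ref{Szabo} to $(X,\Delta)$ and fix a projective birational morphism $\mu:X'\to X$ from a smooth surface such that $\Ex(\mu)\cup \Supp\,\mu^*(\Delta)$ is simple normal crossing and $a(E_i,X,\Delta)>-1$ for every $\mu$-exceptional prime divisor $E_i$. I claim the same $\mu$ verifies Proposition~\ref{Szabo}(2) for $(X,\Delta')$. Since $0\leq \Delta'\leq \Delta$ forces $\Supp\,\Delta'\subseteq \Supp\,\Delta$, we have $\Ex(\mu)\cup \Supp\,\mu^*(\Delta')\subseteq \Ex(\mu)\cup \Supp\,\mu^*(\Delta)$, and any subdivisor of a simple normal crossing divisor is again simple normal crossing; so the first requirement of Proposition~\ref{Szabo}(2) holds for $(X,\Delta')$.

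It remains to check $a(E_i,X,\Delta')>-1$, which is the only substantive point. I would argue by the monotonicity of discrepancies under shrinking the boundary. Subtracting the two defining relations
$$K_{X'}=\mu^*(K_X+\Delta)+\sum_E a(E,X,\Delta)\,E,\qquad K_{X'}=\mu^*(K_X+\Delta')+\sum_E a(E,X,\Delta')\,E,$$
in which $E$ ranges over the prime divisors of $X'$, yields
$$\sum_E\bigl(a(E,X,\Delta')-a(E,X,\Delta)\bigr)E=\mu^*(\Delta-\Delta').$$
As $\Delta-\Delta'\geq 0$ the divisor $\mu^*(\Delta-\Delta')$ is effective, so reading off the coefficient of each $E$ gives $a(E,X,\Delta')\geq a(E,X,\Delta)$. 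In particular $a(E_i,X,\Delta')\geq a(E_i,X,\Delta)>-1$ for every $\mu$-exceptional $E_i$, so $(X,\Delta')$ satisfies Proposition~\ref{Szabo}(2) and is therefore dlt. The argument is essentially bookkeeping; the only place demanding care is this discrepancy comparison, a standard consequence of the behaviour of discrepancies under decreasing the coefficients of the boundary (cf. \cite{KM}).
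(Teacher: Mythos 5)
Your proof is correct, but it verifies a different characterization of dlt than the paper does: the paper's proof is a one-line direct check of Definition~\ref{defdlt}, while you verify criterion (2) of Proposition~\ref{Szabo} for a fixed log resolution of $(X,\Delta)$ and then invoke the implication (2)$\Rightarrow$(1). Concretely, the argument the paper leaves implicit is: \Q-factoriality (Proposition~\ref{qfacdlt}) makes $K_X+\Delta'$ \Q-Cartier; then for any proper birational $f:Y\to X$ and $f$-exceptional prime divisor $E$ with $f(E)\in S(X,\Delta')$, one notes $S(X,\Delta')\subseteq S(X,\Delta)$ (a subdivisor of a simple normal crossing divisor is again simple normal crossing --- the same observation you make, but on $X$ rather than on the resolution), so dlt-ness of $(X,\Delta)$ together with exactly the discrepancy monotonicity $a(E,X,\Delta')\geq a(E,X,\Delta)>-1$ that you derive finishes the proof. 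What your route buys: you only ever handle one explicit morphism $\mu$, and never need to reason about arbitrary birational models or the set $S(X,\Delta')$. What it costs: you lean on the nontrivial direction (2)$\Rightarrow$(1) of Proposition~\ref{Szabo}, which the paper's argument does not need at all; in that sense the paper's proof is the more economical one, and your subtraction-of-discrepancies computation is precisely the content hidden behind the word ``immediately'' in the paper's proof.
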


\begin{proof}
Since $X$ is \Q-factorial, 
the assertion immediately follows 
from Definition~\ref{defdlt}. 
\end{proof}

\begin{prop}\label{pltdlt}
Let $(X, \Delta)$ be a dlt surface. 
Then the following assertions are equivalent. 
\begin{enumerate}
\item{$(X, \Delta)$ is plt.}
\item{$\llcorner \Delta\lrcorner$ is smooth.}
\item{Each connected component of 
$\llcorner \Delta\lrcorner$ is irreducible.}
\end{enumerate}
\end{prop}

\begin{proof}
See \cite[Proposition~5.51]{KM}. 
Note that the proof of \cite[Proposition~5.51]{KM} 
needs the relative Kawamata--Viehweg vanishing theorem 
for a resolution of singularities $Y\to X$. 
This follows from \cite{Tanaka2}.
\end{proof}

\begin{cor}
Let $(X, \Delta)$ be a dlt surface. 
Then each prime component of 
$\llcorner \Delta\lrcorner$ is smooth.
\end{cor}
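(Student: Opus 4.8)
The plan is to reduce the statement to Proposition~\ref{pltdlt} by isolating a single prime component of the boundary. Let $S$ be a prime component of $\llcorner\Delta\lrcorner$. Since $(X,\Delta)$ is dlt we have $0\le\Delta\le 1$, so the coefficient of $S$ in $\Delta$ is at most one; but $S$ appearing in the round-down $\llcorner\Delta\lrcorner$ forces this coefficient to be exactly one. In particular $0\le S\le\Delta$ as \Q-divisors. By the proposition immediately preceding Proposition~\ref{pltdlt}, which asserts that any sub-boundary $0\le\Delta'\le\Delta$ of a dlt surface pair is again dlt, the pair $(X,S)$ is dlt.

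Next I would apply the classification of Proposition~\ref{pltdlt} to the dlt pair $(X,S)$. Here $\llcorner S\lrcorner=S$ is a single prime divisor, hence irreducible and in particular connected, so its unique connected component is irreducible. Thus condition $(3)$ of Proposition~\ref{pltdlt} holds for $(X,S)$, and the stated equivalence yields condition $(2)$: the divisor $\llcorner S\lrcorner=S$ is smooth. Since $S$ was an arbitrary prime component of $\llcorner\Delta\lrcorner$, every such component is smooth, which is the assertion of the corollary.

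I do not anticipate any genuine obstacle here, as the argument is a formal consequence of the two preceding propositions. The only point requiring a moment's care is the verification that $S\le\Delta$, which is immediate from $\Delta\le 1$ together with the fact that $S$ lies in the round-down. No vanishing theorem or minimal model input enters this step directly, since all such ingredients are already absorbed into the proof of Proposition~\ref{pltdlt}.
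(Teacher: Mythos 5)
Your proposal is correct and follows essentially the same route as the paper: the paper's (very terse) proof also passes to the pair $(X, C)$ for a prime component $C$ of $\llcorner\Delta\lrcorner$, implicitly invoking the preceding proposition on sub-boundaries to see $(X,C)$ is dlt, and then applies Proposition~\ref{pltdlt} to conclude that $(X,C)$ is plt and hence that $C$ is smooth. You have merely made explicit the two steps (coefficient one in the round-down, and the $(3)\Rightarrow(2)$ implication of Proposition~\ref{pltdlt}) that the paper leaves to the reader.
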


\begin{proof}
Let $C$ be a prime component of $\llcorner \Delta\lrcorner$. 
Then $(X, C)$ is plt by Proposition~\ref{pltdlt}. 
\end{proof}

\begin{prop}\label{dltboundary}
Let $(X, C+\Delta')$ be a dlt surface where 
$C$ is a smooth curve in $X$. 
Let $(K_X+C+\Delta')|_C=:K_C+\Delta_C$. 
Then $(C, \Delta_C)$ is lc, that is, 
$0\leq \Delta_C\leq 1$. 
\end{prop}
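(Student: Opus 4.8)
The plan is to compute the different $\Delta_C$ explicitly on a log resolution and read off the two bounds $0\le\Delta_C\le 1$ from the combinatorics of a simple normal crossing divisor. First I would invoke Proposition~\ref{Szabo} to choose a projective birational morphism $\mu\colon X'\to X$ from a smooth surface such that $\Ex(\mu)\cup\Supp\mu^*(C+\Delta')$ is simple normal crossing and $a(E_i,X,C+\Delta')>-1$ for every $\mu$-exceptional prime divisor $E_i$ whose center lies in $S(X,C+\Delta')$. Writing $\tilde C$ for the strict transform of $C$ and using that $C$ is smooth, the induced morphism $\mu|_{\tilde C}\colon\tilde C\to C$ is a proper birational morphism between smooth curves, hence an isomorphism.

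Next I would expand $\mu^*(K_X+C+\Delta')=K_{X'}+\tilde C+\tilde\Delta'+\sum_i d_iE_i$, where $\tilde\Delta'$ is the strict transform of $\Delta'$ and $d_i=-a(E_i,X,C+\Delta')$. The pair $(X,C+\Delta')$ is dlt, hence lc, so $d_i\le 1$ for every $i$: for the $E_i$ centered in $S(X,C+\Delta')$ this is the inequality in Proposition~\ref{Szabo}, while for the remaining $E_i$ the center lies where $X$ is smooth and $C+\Delta'$ is simple normal crossing, so $d_i\le 1$ by the standard computation of discrepancies over a simple normal crossing point. Restricting the displayed identity to $\tilde C$ and applying adjunction $(K_{X'}+\tilde C)|_{\tilde C}=K_{\tilde C}$ on the smooth surface $X'$, together with the isomorphism $\mu|_{\tilde C}$, I obtain that $\Delta_C$ is identified with $(\tilde\Delta'+\sum_i d_iE_i)|_{\tilde C}$.

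Now I would read off the upper bound. At a point $\tilde P\in\tilde C$ the simple normal crossing hypothesis forces at most one component of $\tilde\Delta'+\sum_i E_i$ other than $\tilde C$ to pass through $\tilde P$, and it meets $\tilde C$ transversally there; hence the coefficient of the corresponding point of $C$ in $\Delta_C$ equals either $0$, a coefficient of $\tilde\Delta'$ (which is $\le 1$ since $0\le C+\Delta'\le 1$), or some $d_i\le 1$. Therefore $\Delta_C\le 1$, which is exactly the lc condition for the curve pair $(C,\Delta_C)$.

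For effectivity $\Delta_C\ge 0$ the contribution of $\tilde\Delta'$ is manifestly nonnegative, so the only issue is the sign of the terms $d_iE_i|_{\tilde C}$ coming from exceptional divisors meeting $\tilde C$; this is the step I expect to be the main obstacle, since the $d_i$ need not be nonnegative in general. I would handle it by noting that $(X,C)$ is dlt, hence plt by Proposition~\ref{pltdlt} as $\llcorner C\lrcorner=C$ is smooth, so that the relevant contribution is the standard correction term $\mathrm{Diff}_C(0)$ of adjunction on a normal surface, supported at the finitely many points where $X$ is singular or $C$ fails to be Cartier, with coefficient of the form $1-1/m_P\in[0,1)$ there. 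Since $C$ and $\Delta'$ share no component, one has $\Delta_C=\mathrm{Diff}_C(0)+\Delta'|_C$ with $\Delta'|_C\ge 0$, and effectivity follows. Combining the two bounds yields $0\le\Delta_C\le 1$, as required.
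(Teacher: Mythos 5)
Your upper bound $\Delta_C\le 1$ is correct and is essentially the paper's own argument: pass to a log resolution (Proposition~\ref{Szabo}), note that every coefficient of the log pullback is $\le 1$ (for exceptional divisors because dlt implies lc, for strict transforms by hypothesis), and use the simple normal crossing condition to see that at each point of $\tilde C$ at most one other component passes through, transversally. The paper does exactly this, quoting Definition~\ref{defdlt}.

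The effectivity step, however, is a genuine gap, and you flagged the right spot yourself. You reduce $\Delta_C\ge 0$ to the assertion that for the plt pair $(X,C)$ the correction term $\mathrm{Diff}_C(0)$ is supported on finitely many points with coefficients $1-1/m_P\in[0,1)$. That is Shokurov's classification of the different for plt surface pairs, a characteristic-zero result (Flips and abundance, Ch.~16) that is nowhere established in this paper's positive-characteristic framework --- and the appendix of this paper exists precisely because such ``well-known'' facts lack usable references in characteristic $p$. Moreover, that assertion is strictly \emph{stronger} than the effectivity you need, so quoting it is not really a reduction: any proof of it would already contain a proof of the proposition. The paper instead gets effectivity by a cheap, characteristic-free argument which you should substitute here: take $f:Y\to X$ to be the \emph{minimal} resolution and write $f^*(K_X+C+\Delta')=K_Y+C_Y+\Delta_Y$. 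The discrepancies of a minimal resolution are $\le 0$ (there are no exceptional $(-1)$-curves, so $K_Y\cdot E_j\ge 0$ for every exceptional curve $E_j$, and the negativity lemma applies), and the pullback of the effective divisor $C+\Delta'$ is its strict transform plus an effective exceptional divisor; hence $\Delta_Y\ge 0$ and $C_Y$ is not one of its components. Restricting to $C_Y$, using adjunction on the smooth surface $Y$ and the isomorphism $C_Y\simeq C$ (valid since $C$ is smooth), gives $\Delta_C=\Delta_Y|_{C_Y}\ge 0$. So the two bounds come from two different resolutions --- the minimal resolution for $\ge 0$, a log resolution for $\le 1$ --- rather than from one log resolution plus the plt structure of the different.
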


\begin{proof}
Let $f:Y\to X$ be an arbitrary resolution and 
let $C_Y$ be the proper transform of $C$. 
Let $f^*(K_X+C+\Delta')=:K_Y+C_Y+\Delta_Y$. 
Note that $C\simeq C_Y$. 
Consider the following commutative diagram. 
$$
\begin{CD}
C_Y @>{\rm closed}>{\rm immersion}> Y\\
@V\simeq VV @VVfV\\
C @>{\rm closed}>{\rm immersion}> X\\
\end{CD}
$$

We prove that $\Delta_C$ is effective. 
Let $f$ be the minimal resolution. 
Then $\Delta_Y$ is effective and 
$C_Y$ is not a prime component of $\Delta_Y$. 
Thus we have $0\leq \Delta_C$ 
by the adjunction formula. 

Let $f$ be a log resolution. 
Then, by Definition~\ref{defdlt}, 
we see $\Delta_Y\leq 1$. 
This means $\Delta_C\leq 1.$
\end{proof}
\begin{cor}\label{dltsdlt}
Let $(X, \Delta)$ be a dlt surface. 
Assume $S:=\llcorner\Delta\lrcorner\neq 0$ and 
let $K_S+\Delta_S:=(K_X+\Delta)|_S$. 
Then $S$ is normal crossing and $(S, \Delta_S)$ is sdlt. 
\end{cor}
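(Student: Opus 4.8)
The plan is to verify the three conditions packaged in Definition~\ref{def-slc}: that $S$ is normal crossing (so in particular $S_2$ and normal crossing in codimension one), that each irreducible component of $S$ is normal, and that the normalization of $(S,\Delta_S)$ is dlt. Since $S$ is a curve, ``normal crossing'' means that $S$ is nodal, and ``each component normal'' means each component is smooth. The latter is already available: every prime component of $S=\llcorner\Delta\lrcorner$ is smooth by Proposition~\ref{pltdlt}. Granting the nodal claim, the dlt-ness of the normalization is then immediate from adjunction: writing $\Delta=C_i+(\Delta-C_i)$ for a component $C_i$ of $S$ and applying Proposition~\ref{dltboundary}, the restriction $(K_X+\Delta)|_{C_i}=K_{C_i}+\Delta_{C_i}$ satisfies $0\le\Delta_{C_i}\le 1$; since each $C_i$ is smooth and the normalization of $S$ is the disjoint union $\coprod_i C_i$, the pair $(\,\coprod_i C_i,\ \sum_i\Delta_{C_i})$ is lc, hence dlt. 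Thus the whole content of the statement is the claim that $S$ is nodal.

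To prove that $S$ is nodal, I would fix a point $x$ at which $S$ fails to be smooth. As the components of $S$ are smooth, the set of components through $x$ has size $k\ge 2$, say $C_1,\dots,C_k$. Because $0\le\sum_{j=1}^{k}C_j\le\Delta$, the pair $(X,\sum_j C_j)$ is again dlt, and I restrict to $C_1$ by Proposition~\ref{dltboundary}, obtaining $K_{C_1}+\Delta_{C_1}=(K_X+\sum_j C_j)|_{C_1}$ with $\Delta_{C_1}\le 1$. Writing $m\ge 1$ for the local index of $X$ at $x$, the different satisfies $\mult_x\Delta_{C_1}\ge (1-\frac1m)+\frac{k-1}{m}=1+\frac{k-2}{m}$, where the summand $1-\frac1m$ comes from the singularity of $X$ at $x$ and each $C_j$ with $j\ge 2$ passing through $x$ contributes at least $\frac1m$. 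Comparing with $\Delta_{C_1}\le 1$ forces $k=2$; moreover, in the smooth case $m=1$ it forces $(C_1\cdot C_2)_x=1$, a transversal crossing. Hence $S$ has at most a node at $x$, provided $x$ is a smooth point of $X$.

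The main obstacle is to exclude the remaining case $x\in\Sing(X)$, i.e.\ that two boundary components meet at a singular point ($m\ge 2$). The computation above already gives $\mult_x\Delta_{C_1}=1$, which is consistent with $(X,C_1+C_2)$ being lc; the point is that it is \emph{not} consistent with dlt here. By inversion of adjunction for surfaces---equivalently, by tracing this through the log resolution furnished by Proposition~\ref{Szabo} and using negative definiteness of the exceptional intersection matrix---a different-coefficient equal to $1$ produces an exceptional divisor $E$ over $x$ with $a(E,X,C_1+C_2)=-1$. Since $x\in\Sing(X)\subseteq S(X,\Delta)$, this contradicts Definition~\ref{defdlt}, which requires $a(E,X,\Delta)>-1$ for every exceptional divisor whose center is a point of $S(X,\Delta)$. (For contrast, at a smooth point the analogous $E$ also has $a(E)=-1$, but there $x\notin S(X,\Delta)$, which is exactly why nodes at smooth points are permitted.) Therefore $m=1$, so $x$ is a smooth point of $X$ and the previous paragraph shows $S$ is nodal at $x$.

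Assembling these steps, $S$ is nodal; in particular $S$ is $S_2$, normal crossing in codimension one, and has smooth (hence normal) components, and its normalization carries the dlt boundary computed in the first paragraph. Consequently $(S,\Delta_S)$ is sdlt, which completes the argument. The delicate point, and the one I expect to require the most care, is the singular-point exclusion of the third paragraph: computing the different at a singular point and, above all, passing from a different-coefficient equal to $1$ to a genuine discrepancy-$(-1)$ exceptional divisor centered at $x$, where inversion of adjunction (or the explicit analysis of the resolution in Proposition~\ref{Szabo}) is indispensable.
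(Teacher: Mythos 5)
Your reduction is structurally sound and, in its second half, coincides with the paper's: smoothness of the prime components via Proposition~\ref{pltdlt}, adjunction to each component via Proposition~\ref{dltboundary}, and the observation that for curves coefficients in $[0,1]$ already give (s)dlt, so that everything rests on showing $S$ is nodal. Where you diverge is exactly there: the paper disposes of the nodal claim in one line by citing the local classification of dlt surface germs along the boundary, \cite[Theorem~4.15]{KM} (at a point of $\llcorner\Delta\lrcorner$, either the germ is plt with one smooth branch, or $X$ is smooth and $\llcorner\Delta\lrcorner$ is a transverse crossing of two branches), whereas you set out to reprove that classification. Your bookkeeping with Definition~\ref{defdlt} is exactly right, in particular the remark that the blow-up of a node at a smooth point also has discrepancy $-1$ but causes no contradiction because such a point lies outside $S(X,\Delta)$, while a singular point of $X$ always lies in $S(X,\Delta)$.

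The gap is that the two pillars of your argument are invoked rather than proven, and they are precisely the content of the citation being avoided. First, the bound $\mult_x\Delta_{C_1}\ge(1-\tfrac1m)+\tfrac{k-1}{m}$ at a point of local index $m$ is itself a piece of the surface classification/different machinery, nowhere established in the paper. Second, and more seriously, the pivotal step ``different coefficient $=1$ at $x$ produces an exceptional $E$ centered at $x$ with $a(E,X,C_1+C_2)=-1$'' is an inversion-of-adjunction statement; it is not available in the paper, and in positive characteristic it cannot simply be quoted from the characteristic-zero literature, since the standard proofs use vanishing/connectedness theorems that must be re-justified. The statement is true and can be supplied in ways compatible with this paper: either by dual-graph analysis on a minimal log resolution (negative definiteness plus adjunction on the exceptional curves, i.e.\ essentially reproving \cite[Theorem~4.15]{KM}), or by a connectedness argument -- on a log resolution separating the strict transforms $C_1'$ and $C_2'$, both meet the connected fiber over $x$, and connectedness of the non-klt locus over $x$ (in the spirit of Proposition~\ref{1.3}, which rests on the characteristic-$p$ vanishing of \cite{Tanaka2}) forces some exceptional curve in the chain joining them to have coefficient $1$, hence discrepancy $-1$, with center $x$. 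As written, however, your proposal replaces the paper's citation by an appeal to unproven results of comparable depth, so the actual crux of the corollary -- that $S$ is normal crossing -- is not proved.
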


\begin{proof}
By \cite[Theorem~4.15]{KM}, $S$ is normal crossing. 
Thus, the assertion follows 
from Proposition~\ref{dltboundary}. 
\end{proof}

\begin{prop}\label{dltblowup}
Let $(X, \Delta)$ be an lc surface. 
Then there exists a proper birational morphism 
$h:Z\to X$ from a smooth surface $Z$ 
such that $(Z, \Delta_Z)$ is dlt 
where $\Delta_Z$ is defined by 
$K_Z+\Delta_Z=h^*(K_X+\Delta)$. 
\end{prop}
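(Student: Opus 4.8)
The plan is to produce $Z$ in two stages: first pass to a smooth model on which the crepant transform of $\Delta$ is already a genuine boundary, and then blow up further, extracting only divisors of nonpositive discrepancy, until the pair becomes dlt in the sense of Definition~\ref{defdlt}.

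First I would take the minimal resolution $\pi\colon X^m\to X$ and define $\Delta^m$ by $K_{X^m}+\Delta^m=\pi^*(K_X+\Delta)$. The reason for using the minimal resolution is the standard discrepancy estimate $a(E_i,X,0)\le 0$ for every $\pi$-exceptional prime divisor $E_i$, which follows from $K_{X^m}\cdot E_i\ge 0$ together with negative definiteness of the exceptional configuration (cf. \cite{Tanaka1}). Since $\pi^*\Delta$ is effective, $a(E_i,X,\Delta)\le a(E_i,X,0)\le 0$, so the coefficient $-a(E_i,X,\Delta)$ of $E_i$ in $\Delta^m$ is nonnegative; and since $(X,\Delta)$ is lc this coefficient is at most $1$. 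As the coefficients of the strict transform $\pi_*^{-1}\Delta$ also lie in $[0,1]$, we obtain $0\le\Delta^m\le 1$, so $(X^m,\Delta^m)$ is an lc pair on a \emph{smooth} surface whose boundary is a genuine boundary.

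Next I would arrange the dlt condition. By Definition~\ref{defdlt} and Proposition~\ref{Szabo} it suffices to ensure that no divisor of discrepancy $-1$ has its center inside the finite bad set $S(X^m,\Delta^m)$, which here is just the non-simple-normal-crossing locus of $\Delta^m$. If some discrepancy-$(-1)$ place (an lc place) is centered at such a point $p$, I would blow it up; the crucial observation is that a point of the non-SNC locus lying under an lc place has $\mathrm{mult}_p\Delta^m\ge 1$, so the exceptional divisor of the blow-up has discrepancy $1-\mathrm{mult}_p\Delta^m\le 0$ and the transform of the boundary stays effective, and still $\le 1$ by lc. Repeating this finitely many times, always extracting only divisors of discrepancy in $[-1,0]$, produces $h\colon Z\to X$ with $Z$ smooth, on which every lc place is either already a component of $\Delta_Z$ of coefficient $1$ or has its center away from the non-SNC locus. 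Then $(Z,\Delta_Z)$ satisfies condition $(2)$ of Proposition~\ref{Szabo}, hence is dlt, and by construction $K_Z+\Delta_Z=h^*(K_X+\Delta)$ with $0\le\Delta_Z\le 1$.

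The main obstacle is the bookkeeping in the second stage: one must check that every blow-up needed to separate the lc places from $S(X^m,\Delta^m)$ is centered at a point where $\Delta^m$ has multiplicity $\ge 1$, so that no exceptional divisor of positive discrepancy is ever extracted and $\Delta_Z$ remains effective. This is precisely where the lc hypothesis on $(X,\Delta)$ is essential; for surfaces it amounts to the fact that a smooth-point germ of multiplicity $<1$ is klt and therefore carries no lc place, and it is what forces the construction to terminate in a smooth dlt model rather than in the possibly singular dlt modification that a relative minimal model program would yield.
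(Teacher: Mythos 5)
Your construction is genuinely different from the paper's, and it does work in outline. The paper argues top--down: it takes a log resolution $f\colon Y\to X$, writes the crepant boundary as $\Delta_Y=\Delta_Y^{+}-\Delta_Y^{-}$, uses negativity of the exceptional intersection form to produce a $(-1)$-curve inside $\Supp\Delta_Y^{-}$, and contracts such curves one at a time until the negative part is gone (termination is trivial: the Picard number drops); dlt-ness of the resulting $(Z,\Delta_Z)$ is then checked from Definition~\ref{defdlt} by observing that any exceptional divisor with center in $S(Z,\Delta_Z)$ has center inside the image of $\Supp\Delta_Y^{-}$, where all discrepancies are positive. You argue bottom--up: the minimal resolution makes the crepant boundary effective for free (your discrepancy estimate is exactly the negativity lemma, and it correctly replaces the paper's contraction stage), and you then blow up non-SNC points carrying lc places. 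Your ``crucial observation'' is correct: for an effective $\Delta$ on a smooth surface germ with $\mult_p\Delta<1$, every exceptional divisor with center $p$ has positive discrepancy (the standard inductive blow-up computation; the pair is in fact terminal at $p$), so an lc place forces $\mult_p\Delta\geq 1$ and the extracted divisor has discrepancy in $[-1,0]$, keeping the boundary in $[0,1]$. What the paper's route buys is effortless termination; what yours buys is that it never manipulates a negative part and localizes the failure of dlt-ness at the finitely many non-SNC points. (Minor point: at the end you should invoke Definition~\ref{defdlt} directly rather than Proposition~\ref{Szabo}(2) --- your terminating condition \emph{is} the condition of the definition, since the pair stays lc, so any discrepancy $\leq -1$ centered in $S(Z,\Delta_Z)$ would have to equal $-1$.)

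The one genuine gap is the termination of your second stage: ``repeating this finitely many times'' is asserted, not proved, and it is not automatic, because an lc surface pair can have \emph{infinitely many} lc places (e.g.\ $\Delta$ a nodal curve with coefficient one, where infinitely many lc places lie over the node), so you cannot argue that the process exhausts a finite list of divisors to be extracted. The gap is fillable by a domination argument. Each of your centers is a non-SNC point of $\Supp\Delta_{Z_i}$, hence a non-SNC point of the total transform of $\Supp\Delta^{m}$, since $\Supp\Delta_{Z_i}$ is a union of components of that total transform and a union of components of an SNC curve is SNC. Now fix a log resolution $\mu\colon Y\to X^{m}$ of $(X^{m},\Supp\Delta^{m})$. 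A proper birational morphism between smooth surfaces is a composition of point blow-ups, and such a morphism cannot be an isomorphism over a point at which the total transform downstairs fails to be SNC while it is SNC upstairs; hence, by induction, $\mu$ factors through every surface $Z_i$ your process produces, and the number of blow-ups performed is bounded by the number of $\mu$-exceptional divisors. With that paragraph added, your proof is complete.
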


\begin{proof}
Let $f:Y\to X$ be a log resolution of $(X, \Delta)$ and 
let $K_Y+\Delta_Y:=f^*(K_X+\Delta)$. 
Let $\Delta_Y=\Delta_Y^{+}-\Delta_Y^{-}$ 
where $\Delta_Y^{+}$ and $\Delta_Y^{-}$ 
are effective and 
$\Delta_Y^{+}$ and $\Delta_Y^{-}$ 
have no common irreducible components. 
Since $K_Y\cdot \Delta_Y^{-}<0$ and 
each irreducible component of 
$\Delta_Y^{-}$ is $f$-exceptional, 
there exists a $(-1)$-curve $C$ 
such that $C\subset \Supp \Delta_Y^{-}.$ 
Contract this $(-1)$-curve $Y\to Y'$. 
We repeat this procedure and we obtain 
morphisms 
$$f:Y\overset{g}\to Z\overset{h}\to X.$$
Then we see that 
$Z$ is smooth and $0\leq \Delta_Z\leq 1$ 
where $K_Z+\Delta_Z=h^*(K_X+\Delta).$ 
We prove that $(Z, \Delta_Z)$ is dlt. 
Let $l:W\to Z$ be a proper birational morphism and 
$E\subset W$ be an $l$-exceptional prime divisor such that 
$l(E)\in S(Z, \Delta).$ 
We prove $a(Z, \Delta_Z, E)>-1.$ 
We may assume that 
$W$ is smooth and $l:W\to Z$ factors through $Y$. 
We obtain four surfaces: 
$$W\overset{p}\to Y\overset{g}\to Z\overset{h}\to X.$$
Note that $p(E)\subset \Supp \Delta_Y^{-}.$ 
There are two cases: 
(0)$\dim p(E)=0$ and (1)$\dim p(E)=1$. 

(0)Assume $\dim p(E)=0$. 
Note that $p$ is a composition of blow-ups. 
Since $\Delta_Y$ is simple normal crossing and  $p(E)\in \Supp \Delta_Y^{-}$, 
we obtain $a(Z, \Delta_Z, E)>0$ by a direct calculation. 

(1)Assume $\dim p(E)=1$. 
Since $p(E)\subset \Supp \Delta_Y^{-}$, 
we obtain the inequality $a(Z, \Delta_Z, E)>0.$ 
\end{proof}

\begin{lem}\label{dltonepoint}
Let $(X, \Delta)$ be a dlt surface. 
Then there exists a proper birational morphism 
$\lambda:X''\to X$ from a normal surface $X''$ 
which satisfies the following properties. 
\begin{enumerate}
\item{For $K_{X''}+\Delta'':=\lambda^*(K_X+\Delta)$, 
the pair $(X'', \Delta'')$ is dlt. }
\item{If $S_i''$ and $S_j''$ are prime components 
of $\llcorner\Delta''\lrcorner$ such that 
$S_i''\neq S_j''$ and $S_i''\cap S_j''\neq\emptyset$, 
then $S_i''\cap S_j''$ is one point. }
\item{$\lambda_*(\llcorner\Delta''\lrcorner)=
\llcorner\Delta\lrcorner$ and 
$\lambda_*\mathcal O_{\llcorner\Delta''\lrcorner}=
\mathcal O_{\llcorner\Delta\lrcorner}$.}
\end{enumerate}
\end{lem}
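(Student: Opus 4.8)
The plan is to take $\lambda\colon X''\to X$ to be the blow-up of $X$ at the finitely many singular points of the reduced curve $S:=\llcorner\Delta\lrcorner$, and then to check the three conclusions by local analysis at these points together with one pushforward computation. The preliminary, enabling step is to pin down the local structure of $S$. By the corollary to Proposition~\ref{pltdlt} every prime component of $S$ is smooth, and by Corollary~\ref{dltsdlt} the curve $S$ is normal crossing; hence each singular point $x$ of $S$ is an ordinary node, the transverse meeting of exactly two distinct components $S_a,S_b$. I would then argue that $X$ is regular at $x$ and that no other (in particular no fractional) component of $\Supp\Delta$ passes through $x$: since $0\leq S_a+S_b\leq\Delta$ gives that $(X,S_a+S_b)$ is dlt, any of these bad configurations would place $x$ in $S(X,S_a+S_b)\subseteq S(X,\Delta)$ (Definition~\ref{defdlt}) and, by the discrepancy computation for a single blow-up (where $\mult_x\Delta$ would exceed $2$), would produce an exceptional divisor of discrepancy $\leq-1$ centered at $x$, contradicting dlt-ness. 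So the nodes $x_1,\dots,x_N$ are transverse double points of $S$ at regular points of $X$.

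Next I would set $\lambda$ to be the blow-up at $x_1,\dots,x_N$; as these are regular points of the normal surface $X$, the surface $X''$ is normal. Writing $E_\alpha$ for the exceptional curve over $x_\alpha$ and computing $K_{X''}+\Delta''=\lambda^*(K_X+\Delta)$ locally at the smooth point $x_\alpha$, where $\Delta$ has multiplicity exactly $2$, one finds that $E_\alpha$ appears in $\Delta''$ with coefficient $1$, so $E_\alpha\subset\llcorner\Delta''\lrcorner$, while each proper transform $\widetilde{S_i}$ keeps coefficient $1$. Property~(2) is then immediate: after blowing up every node the proper transforms $\widetilde{S_i}$ become pairwise disjoint, two distinct $E_\alpha,E_\beta$ are disjoint (they lie over distinct points), and each $E_\alpha$ meets only $\widetilde{S_a}$ and $\widetilde{S_b}$, in one point each, since the two transverse branches hit $E_\alpha\cong\mathbb P^1$ at distinct tangent directions. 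Hence any two distinct prime components of $\llcorner\Delta''\lrcorner$ that meet do so in a single point.

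For property~(1) I would use Proposition~\ref{Szabo}. Since $\lambda$ modifies $X$ only over the regular nodes and $\Delta''$ is simple normal crossing along each $E_\alpha$ (two transverse branches on a smooth ambient, with no fractional part through $x_\alpha$), no point of any $E_\alpha$ lies in $S(X'',\Delta'')$; moreover $\Sing(X'')$ maps isomorphically onto $\Sing(X)$ and the non-snc points of $\Delta''$ lie off the exceptional locus, where $\lambda$ is an isomorphism, so $\lambda(S(X'',\Delta''))\subseteq S(X,\Delta)$. Taking a common log resolution $g\colon Y\to X$ of $(X,\Delta)$ factoring through $X''$, and using that the crepant pullback leaves discrepancies unchanged, $a(F,X'',\Delta'')=a(F,X,\Delta)$, every $g$-exceptional $F$ centered in $S(X'',\Delta'')$ is centered in $S(X,\Delta)$ and hence satisfies $a(F,X,\Delta)>-1$; Proposition~\ref{Szabo} then yields that $(X'',\Delta'')$ is dlt.

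Finally, property~(3). The equality $\lambda_*\llcorner\Delta''\lrcorner=\llcorner\Delta\lrcorner$ is clear, since the $\widetilde{S_i}$ push forward to the $S_i$ and the $E_\alpha$ are $\lambda$-exceptional; the real content, which I expect to be the main obstacle, is the sheaf identity $\lambda_*\mathcal O_{\llcorner\Delta''\lrcorner}=\mathcal O_{\llcorner\Delta\lrcorner}$. Over a node $x_\alpha$ the fiber of the induced map $\llcorner\Delta''\lrcorner\to\llcorner\Delta\lrcorner$ is the bridge $E_\alpha\cong\mathbb P^1$ joining the two branches $\widetilde{S_a},\widetilde{S_b}$ at the distinct points $E_\alpha\cap\widetilde{S_a}$ and $E_\alpha\cap\widetilde{S_b}$. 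A section over $\lambda^{-1}(U)$ restricts to a constant on the complete curve $E_\alpha$, and this constant equals both branch-values at those two points; comparing with sections over $U$, which are precisely pairs of functions on the two branches agreeing at the node $x_\alpha$, one checks the two spaces of sections coincide, while away from the nodes $\lambda$ is an isomorphism. This local matching is the crux, and once it is established the proof is complete.
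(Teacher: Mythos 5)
Your overall strategy is essentially the paper's: identify the crossing points of $S=\llcorner\Delta\lrcorner$ as regular points of $X$ at which exactly two coefficient-one components cross transversally, blow them up crepantly, and check the three properties. The differences are cosmetic or mildly advantageous: the paper blows up only points where two components meet in at least two points and iterates, deducing property (3) from Proposition~\ref{1.3}, whereas you blow up every node at once and prove (3) by the direct local computation (a regular function on the bridge $E_\alpha\simeq\mathbb P^1$ is constant and must match the two branch values, which reproduces exactly the nodal structure sheaf $\mathcal O_{S,x_\alpha}=\{(f,g):f(x_\alpha)=g(x_\alpha)\}$). That computation is correct and more self-contained than the paper's appeal to Proposition~\ref{1.3}.

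There is, however, one step that fails as written: your verification of property (1) via Proposition~\ref{Szabo}. Condition (2) of that proposition requires a log resolution $\mu$ of $(X'',\Delta'')$ such that \emph{every} $\mu$-exceptional prime divisor has discrepancy $>-1$, not merely those centered in $S(X'',\Delta'')$. A ``common log resolution of $(X,\Delta)$ factoring through $X''$'' need not satisfy this: if it blows up a node of $\llcorner\Delta''\lrcorner$ (say $E_\alpha\cap\widetilde{S_a}$, where two coefficient-one curves cross on a smooth surface), the resulting exceptional divisor has discrepancy exactly $-1$, so the hypothesis of Proposition~\ref{Szabo}(2) is violated even though your checked condition holds. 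The fix is easy and uses exactly the ingredients you already have: verify Definition~\ref{defdlt} directly. For \emph{any} proper birational $f:W\to X''$ and $f$-exceptional prime divisor $F$ with $f(F)\in S(X'',\Delta'')$, your observation that $S(X'',\Delta'')$ is disjoint from $\bigcup E_\alpha$ shows $\lambda$ is an isomorphism near $f(F)$; hence $\lambda\circ f$ is proper birational, $F$ is exceptional for it with center $\lambda(f(F))\in S(X,\Delta)$, and $a(F,X'',\Delta'')=a(F,X,\Delta)>-1$ by dlt-ness of $(X,\Delta)$. (Alternatively, take the resolution in Proposition~\ref{Szabo}(2) to be an isomorphism outside $S(X'',\Delta'')$, as in the paper's proof of that proposition.) A smaller soft spot: your ``single blow-up'' discrepancy argument for regularity of $X$ at a node of $S$ does not literally apply when the node lies in $\Sing(X)$, since the formula $a(E)=1-\mult_x\Delta$ presupposes a smooth point; regularity there is part of the embedded normal-crossing statement behind Corollary~\ref{dltsdlt} (KM Theorem~4.15), which the paper itself invokes, so you should simply cite it rather than re-derive it.
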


\begin{proof}
If $(X, \Delta)$ satisfies the condition (2), 
then the assertion is clear. 
Thus we may assume that 
there exists prime components $S_i$ and $S_j$ 
of $\llcorner\Delta\lrcorner$ such that 
$S_i\neq S_j$ and 
$S_i\cap S_j$ has at least two points. 
Let $P\in S_i\cap S_j$. 
Note that, since $(X, \Delta)$ is dlt, 
$P\in {\rm Reg}(X)$ and 
$\Supp \Delta$ is simple normal crossing at $P$. 
Let $\mu:Y\to X$ be the blowup at $P$ and 
let $K_Y+\Delta_Y:=\mu^*(K_X+\Delta)$. 
We apply this argument to $(Y, \Delta_Y)$ and 
we repeat the same procedure. 
Then, by a direct calculation and Lemma~\ref{1.3}, 
we obtain the desired morphism $X''\to X.$ 
\end{proof}

\end{document}